\newtheorem{Theorem}{Theorem}[section]
\newtheorem{Definition}[Theorem]{Definition}
\newtheorem{Proposition}[Theorem]{Proposition}
\newtheorem{Lemma}[Theorem]{Lemma}
\newtheorem{Corollary}[Theorem]{Corollary}
\newtheorem{Remark}[Theorem]{Remark}
\newtheorem{Hypothesis}[Theorem]{Hypothesis}
\def\R{\mathbb R}
\def\N{\mathbb N}
\def\C{\mathbb C}
\def\E{\mathbb E}
\def\P{\mathbb P}
\def\eps{\varepsilon}
\def\ds{\displaystyle}
\newcommand{\Tr}{\operatorname{Tr}}
\newcommand{\one}{1\!\!\!\;\mathrm{l}}
\title[Maximal Sobolev regularity in infinite dimension]{\bf Sobolev regularity for a class of second order elliptic PDE's in infinite dimension}\date{}
\author[G. Da Prato]{Giuseppe Da Prato}
\address{Scuola Normale Superiore\\
Piazza dei Cavalieri, 7\\ 
56126 Pisa, Italy}
\email{g.daprato@sns.it}
\author[A. Lunardi]{Alessandra Lunardi (Corresponding Author)}
\address{
Dipartimento di Matematica\\
Universit\`a di Parma\\
Parco Area delle Scienze, 53/A\\
43124 Parma, Italy}
\email{alessandra.lunardi@unipr.it}
\subjclass[2010]{35R15, 37L40, 35B65}
\keywords{Kolmogorov operators in infinite dimensions,  maximal Sobolev regularity, invariant measures}
\begin{document}

 \begin{abstract}  
We consider an elliptic Kolmogorov equation $\lambda u - Ku = f$ in a separable Hilbert space $H$. The Kolmogorov operator $K$ is associated to an infinite dimensional convex gradient system: $dX = (AX - DU(X) )dt + dW (t)$, where $A $ is a self-adjoint operator in $H$, and $U$ is  a convex lower semicontinuous function. Under mild assumptions we prove that for $\lambda >0$ and  $f\in L^2(H,\nu)$
the weak solution $u$ belongs to the Sobolev space $W^{2,2}(H,\nu)$, where $\nu$ is the log-concave probability measure of the system. Moreover maximal estimates on the gradient of $u$ are proved. The maximal regularity results are used in the study of perturbed nongradient systems, for which we prove that there exists an invariant measure. The general results are applied to Kolmogorov equations associated to reaction-diffusion and Cahn--Hilliard stochastic PDEÕs. 
 \end{abstract}

 \maketitle
 
\section{Introduction}
 
Let $H$ be an infinite dimensional separable Hilbert space   (norm $\|\cdot\|$, inner product  $\langle \cdot,\cdot\rangle$). We are concerned with the differential equation  
\begin{equation}
  \label{e1.1}
  \lambda u-\frac12\mbox{Tr}\;[D^2u]-\langle Ax-DU(x),Du\rangle=f,
  \end{equation} 
where $A:D(A)\subset H\to H$ is a linear self-adjoint    negative operator, and such that $A^{-1}$ is of trace class, $U:H\to \R \cup \{+\infty\}$ is convex, proper, lowerly bounded, and lower semicontinuous. The data are $\lambda>0$ and $f:H\to\R$, the unknown is $u:H\to \R$. $Du$ and $D^2u$ represent first and second derivatives of $u$ and $\mbox{Tr}\;[D^2u]$ is the trace of $D^2u$.

Equation \eqref{e1.1} is the elliptic  Kolmogorov equation corresponding to the    differential stochastic equation
\begin{equation}
  \label{e1.3}
dX=(AX-DU(X))dt+dW(t), 
  \end{equation} 
\begin{equation}
  \label{datoiniz}
X(0)=x,
\end{equation}
where $W(t),\;t\ge 0$, is an $H$-valued cylindrical Wiener process.  Equation \eqref{e1.3} is a typical example of   gradient system. Under suitable assumptions, it has a unique  invariant measure $\nu(dx) = Z^{-1}e^{-2U(x)} 
\mu(dx)$, where $Z=\int_He^{-2U(y)} \mu(dy)$     and $\mu$ is the Gaussian measure in $H$ with zero mean and covariance  $Q=-\frac12\,A^{-1}$. This is the reason to assume $A^{-1}$ of trace class. $Z$  is just a normalization constant in order to have a probability measure. 
Moreover  system  \eqref{e1.3}  is  reversible; that is,  if the law of $X(0)$ coincides with $\nu$,  the reversed process $Y(t)=X(T-t),\; t\in[0,T]$   fulfills again  \eqref{e1.3}; see, for instance, \cite{HP}. 
In statistical mechanics   $\nu$ is   called a   Gibbs measure. 

The above assumptions do not guarantee  well--posedness of problem  \eqref{e1.3}--\eqref{datoiniz};  however, under suitable additional assumptions,  a   solution in a weak sense may  be constructed,  using the general strategy presented in \cite{Michael} and applied in \cite{DPR}.   But  in this paper we shall concentrate on the solutions of the Kolmogorov equation \eqref{e1.1} only. The precise relation between the weak solution to \eqref{e1.1} and the   solution to \eqref{e1.3}--\eqref{datoiniz} is established in the case of Lipschitz continuous $DU$, and in the example of Section 5. In such cases we prove that the expected formula
$$u = \int_0^{+\infty}e^{-\lambda t}\E (f(X(t, \cdot)))dt$$
holds for every $f\in C_b(H)$. 

Throughout the paper we assume that $U$ belongs to a suitable Sobolev space. Then, 
the measure $\nu$ symmetrizes  the operator
$$
\mathcal K u:=\frac12\mbox{Tr}\;[D^2u]+\langle Ax-DU(x),Du\rangle,
$$
since for good functions $u$, $v$ (for instance,  smooth cylindrical functions) we have 
$$
\int_H\mathcal Ku\,v\,d\nu=-\frac12\int_H\langle Du,Dv\rangle\,d\nu .
$$
Accordingly, we say that  $u\in W^{1,2}(H,\nu)$ is a {\em weak solution} of equation \eqref{e1.1} if
\begin{equation}
 \label{e6}
\lambda\int_H u\,\varphi\,d\nu+\frac12\int_H\langle Du,D\varphi\rangle\,d\nu=\int_H f\,\varphi\,d\mu,\quad\forall\;\varphi\in W^{1,2}(H,\nu).
 \end{equation} 
For every $\lambda >0$, the weak solutions to \eqref{e1.1} when $f$ runs in $L^2(H, \nu)$ are 
precisely the elements of the domain of the self-adjoint  realization $K$  of $\mathcal K$ associated to the quadratic form $(u,v)\mapsto \frac12\int_H\langle Du,D\varphi\rangle\,d\nu$. See \S 3.1 for the definition of $K$. 

Existence and uniqueness of a weak solution to  \eqref{e1.1} have been extensively studied, even in more general situations. We quote   \cite{Albeverio} for the Dirichlet form approach  and \cite{DPR} where it was proved that the restriction of $\mathcal K $ to exponential functions is essentially $m$--dissipative in $L^2(H,\nu)$.  However, in all these papers only    $W^{1,2}$ regularity of solutions  was considered.

Our main concern  is the investigation of the second derivative of the weak solution  and of other maximal regularity results.  In Section 3  we shall prove that the weak solution   $u$ of  equation \eqref{e1.1}  has the following properties:
$$(i)\quad  u\in W^{2,2}(H,\nu), \qquad (ii)\quad \ds\int_H\|(-A)^{1/2}Du\|^2d\nu<\infty, $$
and under further assumptions,
$$
(iii) \quad  \int_H\langle D^2 U Du,Du\rangle\,d\nu<\infty .$$
Regularity of the second derivative of $u$ and sharp estimates for $Du$ are challenging problems for the theory of elliptic equations, even in finite dimensions.  (i) is a ``natural" maximal regularity result for elliptic equations, both in finite and in infinite dimensions, while (ii) 
is typical of the infinite dimensional setting; see, for example,  \cite{S,DPZ3}  for the Ornstein--Uhlenbeck operator, when $U\equiv 0$. (iii) is meaningful in the case that $D^2 U$ is unbounded; otherwise it is contained in (i). It was known only in finite dimensions (\cite{LMP}). 

Properties (i)--(iii)  allow us  to study  some perturbations of  $\mathcal K$ of the type ${\mathcal  K_1}={ \mathcal K} + {\mathcal B}$, where
 $$
{\mathcal B}u (x) = \langle B(x),Du(x)\rangle,
 $$
 and $B:H\to H$ is possibly unbounded.  This is the subject of Section 4. Taking advantage of (i)--(iii), we can solve 
  \begin{equation}
  \label{e1.4}
  \lambda u-Ku -\langle  B ,Du\rangle=f,
  \end{equation}
under reasonable assumptions on $B$, when $\lambda$ is sufficiently large. The perturbed operator inherits some of the properties of $K$. For instance, it generates an analytic semigroup that preserves positivity.  
In some cases  we can solve \eqref{e1.4} for every $\lambda >0$, in a different $L^2$ setting. More precisely, adapting arguments from \cite{DPZ2} that involve positivity preserving and compactness,  we are able to prove  the existence of $\rho\in L^2(H,\nu)$ such that  a suitable realization of $\widetilde{K}_1$ of $\mathcal K_1 $ is $m$-dissipative in $L^2(H,\zeta)$ where $\zeta(dx)=\rho(x)\nu(dx)$. Then, equation \eqref{e1.4} can be solved for any $\lambda>0$ and any $f\in L^2(H,\zeta)$, and we prove that $\zeta$ is an invariant measure for the semigroup generated by $\widetilde{K}_1$ in $L^2(H,\zeta)$.
  
It is worth to note that ${\mathcal  K_1}$ is the Kolmogorov operator corresponding to system
\begin{equation}
  \label{e1.5}
dX=(AX-DU(X)+B(X))dt+dW(t),\quad X(0)=x,
  \end{equation} 
which is not a gradient system in general. 
It may be useful in the study of nonequilibrium problems arising in statistical mechanics; see for instance \cite{Jona}. 
Another possible application of the regularity of the second derivative of  the solution $u$ of  \eqref{e1.4} could be to the pathwise uniqueness of   \eqref{e1.5} (see the recent paper \cite{DPFPR}), through the Veretennikov transform. This will be the object of future investigations.  

In Sections 5 and 6  we  show that  the general theory may be applied to Kolmogorov equations of reaction-diffusion and Cahn--Hilliard stochastic PDE's.

\section{Notations and preliminaries}

In this section we fix  notation and collect several  preliminary results needed in the sequel.
   Though     essentially known, they are scattered in different papers, so we will give details  for the reader's convenience.  Readers familiar with Sobolev spaces in infinite dimensions may  jump to Section 3. 
   
  \vspace{3mm} 
   
Let $H$ be a separable Hilbert space with inner product $\langle \cdot, \cdot\rangle$ and norm $\|\cdot\|$, endowed with a Gaussian measure $\mu := {\mathcal N}_{\;0, Q}$ on the Borel sets of $H$, where  $Q\in {\mathcal L}(H)$ is a self-adjoint  positive operator with finite trace. We choose once and for all an orthonormal basis $\{ e_k:\;k\in \N\}$ of $H$ such that  $Qe_k = \lambda_k e_k$ for $k\in \N$ and set $x_k = \langle x, e_k\rangle $ for each $x\in H$.  We denote by  $P_n$   the orthogonal projection on the linear span of $e_1, \ldots, e_n$.  
For each $k\in \N \cup \{+\infty\}$ we denote by   ${\mathcal F}{\mathcal C}^k_b(H)$  the set of the cylindrical functions  $\varphi(x) = \phi(x_1, \ldots, x_n)$ for some $n\in \N$, with $\phi\in C^k_b(\R^n)$.

 
\subsection{Sobolev spaces with respect to $\mu$}


For $p>1$ we set as usual $p' : =p/(p-1)$. If a function $\varphi:H\mapsto \R$ is Fr\'echet differentiable at $x\in H$, we denote by $D\varphi (x)$ its gradient at $x$. Moreover, we denote by $D_k \varphi(x) = \langle D\varphi (x), e_k\rangle$ its derivative in the direction of $e_k$, for every $k\in \N$.

For $0\leq \theta \leq 1$ and $p>1$ the Sobolev spaces $W^{1,p}_{\theta}(H,\mu)$  are  the completions of ${\mathcal F}{\mathcal C}^1_b(H)$   in the Sobolev norms
$$\|\varphi \|_{W^{1,p}_{\theta}(H, \mu)}^p : =  \int_H (|\varphi |^p +  \|Q^{\theta}D\varphi\|^p)d\mu = 
\int_H  |\varphi |^p +  \bigg(\sum_{k=1}^\infty (\lambda^{ \theta}_k  D_k\varphi )^2\bigg)^{p/2} d\mu .$$
For $\theta=1/2$ they coincide with the usual Sobolev spaces of the Malliavin Calculus; see, for example,  \cite[Chapter 5]{Boga}; for $\theta =0  $ and $p=2$ they are the spaces considered in \cite{DPZ3}.  Such  completions are identified with subspaces of $L^{p}(H,\mu)$ since the integration by parts formula 
\begin{equation}
\label{e1.7}
 \int_H D_k\varphi\,\psi\,d\mu= -\int_H D_k\psi\,\varphi\,d\mu  +\frac{1}{\lambda_k}\int_H x_k\varphi\,\psi\,d\mu, \quad \varphi, \;\psi \in {\mathcal F}{\mathcal C}^1_b(H),
 \end{equation}
allows us to easily  show that the operators $Q^{\theta}D: {\mathcal F}{\mathcal C}^1_b(H)\mapsto 
L^p(H, \mu; H)$ are closable in $L^{p}(H,\mu)$, and the domains of their closures coincide with $W^{1,p}_{\theta}(H,\mu)$. 

Moreover, since $x\mapsto x_k\in L^s(H, \mu)$ for every $s\ge 1$, \eqref{e1.7} is extended by density  to all $ \varphi \in W^{1,q}_{\theta} (H, \mu)$, $\psi\in W^{1,p}_{\theta} (H, \mu)$ such that $1/p + 1/q <1$. In fact, extending \cite[Lemma 9.2.7]{DPZ3} to the case $p \geq  2 $ it is possible to see that it holds for $1/p + 1/q=1$ too.

The spaces $W^{1,p}_{\theta}(H, \mu ;H)$ are defined in a similar way, replacing ${\mathcal F}{\mathcal C}^1_b(H)$ by linear combinations of functions of the type $\varphi e_k$, with $\varphi\in {\mathcal F}{\mathcal C}^1_b(H)$. 

 
\subsection{Sobolev spaces with respect to $\nu$}


Concerning $U$ we  shall assume the following:

\begin{Hypothesis}
\label{Hyp}
 $U:H\to \R\cup \{+\infty\} $ is convex, lower semicontinuous and bounded from below. Moreover $U\in W^{1,2}_{1/2}(H, \mu)$. 
\end{Hypothesis}

We denote by     $\nu$ the log-concave measure
$\nu(dx)= Z^{-1}e^{-2U(x)} 
\mu(dx)$. 
Since $e^{-2U } $ is bounded, $\nu(H)=1$.

 \begin{Lemma}
 \label{l1.2}
 For every $p\geq 1$, ${\mathcal F}{\mathcal C}^{\infty}_b(H)$ is dense in $L^p(H, \nu)$.
 \end{Lemma}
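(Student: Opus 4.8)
The plan is to exploit the fact that $\nu$ has a bounded density with respect to the Gaussian measure $\mu$, and then to reduce everything to the classical density of smooth cylindrical functions in $L^p(H,\mu)$. Since $U$ is bounded from below, say $U\ge m$, the density $\rho:=Z^{-1}e^{-2U}$ satisfies $0\le\rho\le C$ with $C:=Z^{-1}e^{-2m}<\infty$. Consequently, for every Borel $g$ we have the one-sided comparison
$$\int_H|g|^p\,d\nu=\int_H|g|^p\,\rho\,d\mu\le C\int_H|g|^p\,d\mu,$$
so that convergence in $L^p(H,\mu)$ forces convergence in $L^p(H,\nu)$. The inclusion map $L^p(H,\mu)\to L^p(H,\nu)$, $g\mapsto g$, is thus bounded, and it is enough to approximate in $L^p(H,\nu)$, by elements of $\mathcal{FC}^\infty_b(H)$, those functions that already lie in $L^p(H,\mu)$.

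First I would reduce to bounded functions. Given $f\in L^p(H,\nu)$ and $M>0$, set $f_M:=(f\wedge M)\vee(-M)$. Since $\nu$ is finite, dominated convergence gives $f_M\to f$ in $L^p(H,\nu)$ as $M\to\infty$. Each $f_M$ is bounded, hence (because $\mu$ is finite as well) $f_M\in L^p(H,\mu)$. This truncation is the step that circumvents the essential subtlety: because $\rho$ may vanish on the set $\{U=+\infty\}$ and is small where $U$ is large, a generic $f\in L^p(H,\nu)$ need \emph{not} belong to $L^p(H,\mu)$, so one cannot invoke $L^p(\mu)$-density directly on $f$ itself.

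Next I would approximate each bounded $g=f_M$ in $L^p(H,\mu)$ by smooth cylindrical functions. This is the classical density statement for Gaussian measures (see, e.g., \cite[Chapter 5]{Boga}); if a self-contained argument is preferred, one takes the conditional expectations $g_n=\E^\mu[g\mid x_1,\dots,x_n]$, which converge to $g$ in $L^p(H,\mu)$ by the martingale convergence theorem, writes $g_n=\psi_n(x_1,\dots,x_n)$ with $\psi_n$ in $L^p$ of the finite-dimensional Gaussian law of $(x_1,\dots,x_n)$, and mollifies $\psi_n$ to obtain $C^\infty_b(\R^n)$ approximants. Composing with $P_n$ then yields functions in $\mathcal{FC}^\infty_b(H)$ converging to $g$ in $L^p(H,\mu)$.

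Finally, the comparison from the first paragraph transfers this approximation to $L^p(H,\nu)$: if $\varphi_n\to f_M$ in $L^p(H,\mu)$, then $\varphi_n\to f_M$ in $L^p(H,\nu)$; combining this with $f_M\to f$ in $L^p(H,\nu)$ and choosing a diagonal sequence of indices completes the proof. The only genuinely delicate point is the truncation/integrability issue noted above; everything else is either the one-sided density bound or the standard finite-dimensional approximation, so I expect the main obstacle to be careful bookkeeping rather than any real analytic difficulty.
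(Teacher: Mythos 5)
Your proof is correct, but it takes a genuinely different route from the paper's. The paper argues entirely inside $L^p(H,\nu)$: since $H$ is separable, $C_b(H)$ is dense in $L^p(H,\nu)$; any $f\in C_b(H)$ is then approximated by the cylindrical functions $f\circ P_n$ (dominated convergence, using $P_nx\to x$), which are in turn mollified in finitely many variables. You instead transfer the problem to the Gaussian measure $\mu$ through the one-sided bound $\|g\|_{L^p(H,\nu)}\le C^{1/p}\|g\|_{L^p(H,\mu)}$, valid because $U$ is bounded below, truncate $f$ so as to land in $L^p(H,\mu)$ --- your observation that a generic element of $L^p(H,\nu)$ need not be $\mu$-integrable is exactly right, and is the one place where the transfer would otherwise fail --- and then invoke (or re-derive via conditional expectations and mollification) the classical density of ${\mathcal F}{\mathcal C}^\infty_b(H)$ in $L^p(H,\mu)$ from \cite{Boga}. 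As for what each route buys: the paper's argument uses only that $\nu$ is a Borel probability measure on a separable Hilbert space, so it needs neither $\nu\ll\mu$ nor boundedness of the density, and for this reason it extends verbatim to the measures $\rho\nu$ appearing later (the paper explicitly reuses Lemma \ref{l1.2} ``with the same proof'' in Proposition \ref{p4.10}, where $\rho$ is merely a nonnegative $L^2$ density and no lower bound on the analogue of $U$ is available --- your comparison argument would not apply there); on the other hand, the paper's proof quietly relies on the density of $C_b(H)$ in $L^p$ of a Borel measure, which itself rests on a regularity/Lusin-type argument, whereas you replace that by the quotable Gaussian statement plus martingale convergence. Two small bookkeeping points in your version are fine but worth making explicit: the $L^1$-martingale convergence is justified because your truncated functions are bounded, hence the martingale is uniformly integrable; and your mollified conditional densities do lie in $C^\infty_b(\R^n)$, since convolving a bounded function with $D^k\theta_{\varepsilon}\in L^1(\R^n)$ keeps every derivative bounded. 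Note also that your conditional-expectation step implicitly uses that the coordinate functionals $x_k$ generate the Borel $\sigma$-algebra of $H$ --- again a consequence of separability, playing the same role that $P_nx\to x$ plays in the paper's proof.
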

 \begin{proof}
  Since $H$ is separable, then $C_b(H)$ is dense in $L^p(H, \nu)$. 
 Any $f\in C_b(H)$ may be approached in $L^p(H, \nu)$ by the sequence 
 $f_n(x) : =  f(P_nx) $, by the dominated convergence theorem. In its turn, the cylindrical functions $f_n$ are approached by their (finite dimensional) convolutions with smooth mollifiers, that belong to ${\mathcal F}{\mathcal C}^{\infty}_b(H)$. 
 \end{proof}

We may apply the integration by parts formula \eqref{e1.7} with $\psi$ replaced by $\psi e^{-2U}$, that belongs to $W^{1,2}_{1/2}(H,\mu)$ for $\psi \in {\mathcal F}{\mathcal C}^1_b(H)$. We get, for $\varphi$, $\psi\in   {\mathcal F}{\mathcal C}^1_b(H)$ and $h\in \N$, 
\begin{equation}
\label{e1.9}
\int_H D_h\varphi\,\psi\,d\nu+\int_H D_h\psi\,\varphi\,d\nu=  2 \int_H
D_hU\,\varphi\,\psi  \,d\nu+\frac1{\lambda_h}\int_H x_h \varphi\,\psi\,d\nu  .
\end{equation} 
 Once again, the Sobolev spaces  associated to the measure $\nu$ are introduced in a standard way with the help of the  integration by parts formula \eqref{e1.9}. We recall that  $ {\mathcal L}_2(H)$ is the space of the Hilbert--Schmidt operators, that are the bounded linear operators  $L:H\mapsto H$ such that $\|L\|_{{\mathcal L}_2(H)}^2 :=\sum_{h,k=1}^\infty\langle Le_h, e_k\rangle^2 <\infty$.

\begin{Lemma}
\label{l1.3}
For all   $q\geq 2$ the operators
\begin{equation}
\label{definizione}
D: {\mathcal F}{\mathcal C}^1_b(H)\mapsto 
L^q(H, \nu; H), \quad Q^{\pm 1/2}D: {\mathcal F}{\mathcal C}^1_b(H)\mapsto 
L^q(H, \nu; H),
\end{equation}
\begin{equation}
\label{definizione2}( D,  D^2 ): {\mathcal F}{\mathcal C}^2_b(H) \mapsto L^{q}(H,\nu; H)\times L^{q}(H,\nu; {\mathcal L}_2(H))
\end{equation}
are closable in $L^q(H, \nu)$. 
\end{Lemma}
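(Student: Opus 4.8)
The plan is to verify closability directly from its definition: for each of the three operators $T$ I would take a sequence $\varphi_n\in {\mathcal F}{\mathcal C}^1_b(H)$ (respectively ${\mathcal F}{\mathcal C}^2_b(H)$) with $\varphi_n\to 0$ in $L^q(H,\nu)$ and $T\varphi_n\to \Phi$ in the relevant target space, and show $\Phi=0$. The only tools needed are the integration by parts formula \eqref{e1.9} and the density of cylindrical functions in $L^{q'}(H,\nu)$ (Lemma \ref{l1.2}), once $q\ge 2$ is used to guarantee the right integrability.

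I would start with $D$. Fix $h\in\N$ and a test function $\psi\in {\mathcal F}{\mathcal C}^1_b(H)$, and rewrite \eqref{e1.9} as
\begin{equation*}
\int_H D_h\varphi\,\psi\,d\nu = -\int_H D_h\psi\,\varphi\,d\nu + 2\int_H D_hU\,\varphi\,\psi\,d\nu + \frac{1}{\lambda_h}\int_H x_h\,\varphi\,\psi\,d\nu .
\end{equation*}
Substituting $\varphi=\varphi_n$ and letting $n\to\infty$, the left-hand side tends to $\int_H \Phi_h\,\psi\,d\nu$, with $\Phi_h=\langle\Phi,e_h\rangle$, since the $h$-th component map is $1$-Lipschitz and $\psi\in L^{q'}(H,\nu)$. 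On the right-hand side every factor multiplying $\varphi_n$ lies in $L^{q'}(H,\nu)$: $D_h\psi$ and $\psi$ are bounded, $x_h\in L^s(H,\nu)$ for all $s$ (since $x_h\in L^s(H,\mu)$ and $e^{-2U}$ is bounded), and, crucially, $D_hU\in L^2(H,\mu)\subset L^2(H,\nu)\subset L^{q'}(H,\nu)$. The first membership comes from Hypothesis \ref{Hyp}, since $U\in W^{1,2}_{1/2}(H,\mu)$ gives $\lambda_h^{1/2}D_hU\in L^2(H,\mu)$; the second from boundedness of $e^{-2U}$; and the last from $q'\le 2$, i.e. from $q\ge 2$. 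Hence all three terms on the right vanish in the limit, so $\int_H\Phi_h\,\psi\,d\nu=0$ for every $\psi\in {\mathcal F}{\mathcal C}^1_b(H)$. As these $\psi$ are dense in $L^{q'}(H,\nu)=(L^q(H,\nu))^*$, I conclude $\Phi_h=0$ for all $h$, hence $\Phi=0$.

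The operators $Q^{\pm1/2}D$ reduce to this case: convergence of $Q^{\pm1/2}D\varphi_n$ in $L^q(H,\nu;H)$ forces $\lambda_h^{\pm1/2}D_h\varphi_n$ to converge in $L^q(H,\nu)$ for each fixed $h$, so $D_h\varphi_n\to\Phi_h$ with $\Phi_h=\lambda_h^{\mp1/2}\Psi_h$, and the integration by parts argument again gives $\Phi_h=0$, whence $\Psi_h=0$ for all $h$. For $(D,D^2)$ on ${\mathcal F}{\mathcal C}^2_b(H)\subset {\mathcal F}{\mathcal C}^1_b(H)$, the convergence $D\varphi_n\to\Phi$ already yields $\Phi=0$ by the closability of $D$ just established. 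For the second derivative I would apply \eqref{e1.9} with $\varphi$ replaced by $D_h\varphi\in {\mathcal F}{\mathcal C}^1_b(H)$ and direction $k$, obtaining
\begin{equation*}
\int_H D_{kh}\varphi\,\psi\,d\nu = -\int_H D_k\psi\,D_h\varphi\,d\nu + 2\int_H D_kU\,D_h\varphi\,\psi\,d\nu + \frac{1}{\lambda_k}\int_H x_k\,D_h\varphi\,\psi\,d\nu .
\end{equation*}
Setting $\varphi=\varphi_n$ and using $D_h\varphi_n\to 0$ in $L^q(H,\nu)$ together with the same integrability of $D_k\psi$, $\psi$, $x_k$ and $D_kU$, the whole right-hand side tends to zero, while the left-hand side tends to $\int_H\Xi_{kh}\,\psi\,d\nu$, where $\Xi_{kh}=\langle\Xi e_h,e_k\rangle$ is the $(k,h)$ entry of the limit $\Xi\in L^q(H,\nu;{\mathcal L}_2(H))$. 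Density of $\psi$ forces $\Xi_{kh}=0$ for all $k,h$, hence $\Xi=0$.

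The scheme is essentially routine; the one point that really must be checked is the integrability of the first derivative $D_hU$ against an arbitrary $L^q$ function, and this is exactly where the hypotheses enter. The bound on $DU$ is only available in $L^2(H,\nu)$, so the duality pairing with $\varphi_n\to 0$ closes precisely because $q\ge 2$, i.e. $q'\le 2$. For $q<2$ the same argument would demand stronger integrability of $DU$ than Hypothesis \ref{Hyp} provides, which accounts for the restriction $q\ge 2$ in the statement.
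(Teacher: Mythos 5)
Your proof is correct and follows essentially the same route as the paper's: both arguments rest on the integration by parts formula \eqref{e1.9}, the fact that $D_hU\in L^2(H,\nu)$ coming from Hypothesis \ref{Hyp} (which is exactly where $q\ge 2$, i.e. $q'\le 2$, enters), and the density of cylindrical functions in $L^{q'}(H,\nu)$, applied first to the gradient operators componentwise and then to $(D,D^2)$ by using \eqref{e1.9} on the partial derivatives $D_h\varphi_n$ after the first part forces $D\varphi_n\to 0$. The only cosmetic difference is that you reduce $Q^{\pm 1/2}D$ to the case of $D$ component by component, while the paper treats the three weights $\theta\in\{0,\pm 1/2\}$ in one uniform argument.
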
 
\begin{proof}
Let $(\varphi_n)\subset {\mathcal F}{\mathcal C}^1_b(H)$ converge to $0$ in $L^{q}(H,\nu)$ and be such that 
$Q^{\theta}D\varphi_n\to W$ in $L^q(H, \nu; H)$, with  $\theta =0$ or $\theta = 1/2$ or $\theta = -1/2$ . Then for every $h\in \N$ the sequence 
$(\langle Q^{\theta}D\varphi_n, e_h\rangle) = (\lambda_h^{\theta}D_h\varphi_n)$ converges to $ \langle W, e_h\rangle $ in $L^{q}(H,\nu)$. By formula \eqref{e1.9} for each $\psi\in   {\mathcal F}{\mathcal C}^1_b(H)$ we have
\begin{equation}
\label{serve}
\int_H D_h\varphi_n\,\psi\,d\nu+\int_H D_h\psi\,\varphi_n\,d\nu=  2 \int_H
D_hU\,\varphi_n\,\psi  \,d\nu+\frac1{\lambda_k}\int_Hx_h \varphi_n\,\psi\,d\nu ,
\end{equation}
and letting $n\to \infty$, we get 
$$\lim_{n\to \infty} \int_H D_h\varphi_n\,\psi\,d\nu = \lim_{n\to \infty} \int_H \lambda_h^{-\theta} \langle W, e_h\rangle \,\psi\,d\nu =0.$$
Since $  {\mathcal F}{\mathcal C}^1_b(H)$ is dense in $L^{q'}(H,\nu)$, then $ \langle W, e_h\rangle =0$   $\nu$-a.e. for every $h\in \N$, hence $W=0$ $\nu$-a.e., and the first statement is proved. 

The proof of the second statement is similar. If $(\varphi_n)\subset {\mathcal F}{\mathcal C}^2_b(H)$ converge to $0$ in $L^{q}(H,\nu)$ and $ D\varphi_n\to W$ in $L^q(H, \nu; H)$, 
$ D^2\varphi_n   \to {\mathcal Q}$ in $L^{q}(H,\nu; {\mathcal L}_2(H))$, by the first part of the proof we have $W=0$, so that for every $k\in \N$, $D_k\varphi_n\to 0$ in $L^{q}(H,\nu)$. On the other hand, for each $h, k\in \N$, $\langle  D^2\varphi_n  e_h, e_k\rangle =  D_{hk}\varphi_n$ goes to $ \langle  {\mathcal Q}e_h, e_k\rangle$ in 
$L^{q}(H,\nu)$. 
Formula \eqref{e1.9} applied to $D_k\varphi_n$ instead of $\varphi$ reads as
$$\int_H D_{hk}\varphi_n\,\psi\,d\nu+\int_H D_h\psi\,D_k\varphi_n\,d\nu =  2 \int_H
D_hU\,D_k\varphi_n\,\psi  \,d\nu+\frac1{\lambda_k}\int_H x_k D_k \varphi_n\,\psi\,d\nu  , $$
for all $\psi\in   {\mathcal F}{\mathcal C}^1_b(H)$. Letting $n\to \infty$ we get
$$ \lim_{n\to \infty} \int_H D_{hk}\varphi_n\,\psi\,d\nu
=  \lim_{n\to \infty} \int_H 
  \langle  {\mathcal Q}e_h, e_k\rangle \,\psi\,d\nu =0.$$
Then, $ \langle  {\mathcal Q}e_h, e_k\rangle =0$ a.e. for each $h$ and $k$, so that ${\mathcal Q}=0$, $\nu$-a.e. 
\end{proof}

\begin{Remark}
\label{pgenerale}
{\em We remark that the restriction $q\geq 2$ comes from the integral $\int_H
D_hU\,\varphi_n\,\psi  \,d\nu$ in \eqref{serve}, where $D_hU  \in L^{2}(H, \nu)$ as a consequence of  Hypothesis \ref{Hyp}. If $\|DU\|\in L^p(H, \mu)$ for some $p>2$ the proof of Lemma \ref{l1.3} works for any $q\geq p'$. }
\end{Remark}

\begin{Definition}
\label{d1.4}
For  $q\geq 2$ we still denote by $ D$, $Q^{1/2} D$, $Q^{-1/2} D$, and by 
$(D,  D^2 )$ the closures in $L^q(H, \nu)$ of the operators defined in \eqref{definizione}, \eqref{definizione2}. 

We denote by $W^{1,q} (H, \nu)$ and by $W^{1,q}_{1/2}(H, \nu)$, $W^{1,q}_{-1/2}(H, \nu)$,  the domains of $ D$, $Q^{1/2} D$, $Q^{-1/2} D$ in $L^q(H, \nu)$, respectively, and by $W^{2,q}(H, \nu)$ the domain of $( D,  D^2 )$ in $L^q(H, \nu)$. 
\end{Definition}
 
Then, $W^{1,q}(H, \nu)$, $W^{1,q}_{\pm 1/2}(H, \nu)$ and $W^{2,q}(H, \nu)$ are Banach spaces with the norms
$$\| u \|_{W^{1,q} (H, \nu)}^q = \int_H |u|^q d\nu + \int_H \| Du\|^q d\nu , $$
$$\| u \|_{W^{1,q}_{\pm1/2}(H, \nu)}^q = \int_H |u|^q d\nu + \int_H \|Q^{\pm 1/2}Du\|^q d\nu , $$
$$\| u \|_{W^{2,q} (H, \nu)}^q =\| u \|_{W^{1,q} (H, \nu)}^q + \int_H \| D^2u \|_{{\mathcal L}_2(H)}^q d\nu . $$
Denoting by $D_ku :=  \lambda_k^{-\theta} \langle  Q^{\theta}Du, e_k\rangle $, with $\theta \in \{0, 1/2, -1/2\}$, 
$D_{hk}u :=   $ $\langle  D^2u  \,e_h, e_k\rangle $, the above Sobolev norms  may be written in a more 
explicit way as 
$$\| u \|_{W^{1,q} (H, \nu)}^q = \int_H |u|^q d\nu + \int_H \bigg(\sum_{k\in \N} (D_ku)^2\bigg)^{q/2} d\nu , $$
$$\| u \|_{W^{1,q}_{\pm 1/2}(H, \nu)}^q = \int_H |u|^q d\nu + \int_H \bigg(\sum_{k\in \N}\lambda_k^{\pm 1}(D_ku)^2\bigg)^{q/2} d\nu , $$
$$\| u \|_{W^{2,q} (H, \nu)}^q =\| u \|_{W^{1,q} (H, \nu)}^q + \int_H 
 \bigg(\sum_{h,k\in \N} (D_{hk}u)^2\bigg)^{q/2}
d\nu  
= \| u \|_{W^{1,q} (H, \nu)}^q + \int_H \mbox{Tr}\;([D^2u]^2)d\nu. $$
For $q=2$,  such spaces  are Hilbert spaces with the respective scalar products
$$\langle u, v\rangle _{W^{1,2}(H, \nu)}  = \int_H  u\,v\, d\nu + \int_H \sum_{k\in \N}
 D_ku D_kv\, d\nu  , $$
$$\langle u, v\rangle _{W^{1,2}_{\pm 1/2}(H, \nu)}  = \int_H  u\,v\, d\nu + \int_H \sum_{k\in \N}
\lambda_k^{\pm 1}D_ku D_kv\, d\nu  , $$
$$\langle u, v\rangle _{W^{2,2} (H, \nu)}  = \langle u, v\rangle _{W^{1,2} (H, \nu)}+ \int_H \sum_{h,k\in \N} D_{hk}uD_{hk}v\,d\nu . $$

\begin{Remark} 
\label{r1.5}
{\em Let us make some remarks about the above definitions.  }
\end{Remark}
\begin{enumerate}
\item 
 It follows immediately from the definition that for every $u \in W^{1,p} (H, \nu)$ and $\varphi\in C^1_b(\R)$, the superposition $\varphi \circ u$ belongs to $W^{1,p} (H, \nu)$, and $D(\varphi \circ u) = (\varphi' \circ u)Du$. This fact will be used frequently in the sequel. 
\item
 Formula \eqref{e1.9} holds for each $\varphi \in  {\mathcal F}{\mathcal C}^1_b(H)$, 
$\psi \in W^{1,q} (H, \nu)$ with $q\geq 2$. Indeed, it is sufficient to approach $\psi$ by a sequence of cylindrical functions in ${\mathcal F}{\mathcal C}^1_b(H)$, and to use \eqref{e1.9} for the approximating functions, recalling that $D_hU$, $x_h \in L^2(H, \nu)$.  
\item
 Similarly, \eqref{e1.9} holds for 
$\varphi \in W^{1,p} (H, \nu)$, $\psi \in W^{1,q} (H, \nu)$
such that $1/p + 1/q \leq 1/2$. 
\end{enumerate}

\subsubsection{Positive and negative parts of  elements of  $W^{1,2}(H,\nu)$}

The following technical lemma will be used later to study positivity of solutions of \eqref{e1.1}. 
\begin{Lemma}
\label{l1.6}
Let  $u\in W^{1,2}(H, \nu)$. Then $|u|$ $($and consequently, $u^+=\sup\{u,0\}$, $u^-=\sup\{-u,0\}$$)$ belongs to $W^{1,2}(H, \nu)$, and 
$D|u| = {\it sign} \,u \,Du$. Moreover $Du=0$ a.e. in the set  $u^{-1}(0)$, and $Du^+= Du\,\one_{\{u\geq 0\}} = Du\,\one_{\{u > 0\} }$, $Du^-= -Du\,\one_{\{u\leq 0\}} = -Du\,\one_{\{u < 0\} }$. 
\end{Lemma}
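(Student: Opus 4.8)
The plan is to prove this statement by finite-dimensional approximation combined with a smooth regularization of the absolute value. The central difficulty is that $t\mapsto |t|$ is not $C^1$ at the origin, so Remark \ref{r1.5}(1) does not apply directly; this is the main obstacle, and the standard device to overcome it is to approximate $|t|$ by functions of the type $\phi_\eps(t)=\sqrt{t^2+\eps^2}-\eps$, which are smooth, satisfy $0\le \phi_\eps(t)\le |t|$, converge to $|t|$ uniformly as $\eps\to 0$, and have derivatives $\phi_\eps'(t)=t/\sqrt{t^2+\eps^2}$ that are bounded by $1$ and converge pointwise to $\mathrm{sign}\,t$ for $t\ne 0$.

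First I would fix $u\in W^{1,2}(H,\nu)$ and approximate it by a sequence $(u_n)\subset {\mathcal F}{\mathcal C}^1_b(H)$ converging to $u$ in $W^{1,2}(H,\nu)$, so that $u_n\to u$ and $Du_n\to Du$ in $L^2(H,\nu)$. For each fixed $\eps>0$ the composition $\phi_\eps\circ u_n$ lies in $W^{1,2}(H,\nu)$ with $D(\phi_\eps\circ u_n)=(\phi_\eps'\circ u_n)Du_n$ by Remark \ref{r1.5}(1). Since $\phi_\eps'$ is globally Lipschitz and bounded by $1$, and $\phi_\eps$ is $1$-Lipschitz with $\phi_\eps(0)=0$, a routine passage to the limit in $n$ (using dominated convergence, the boundedness of $\phi_\eps'$, and the $L^2$ convergence of $u_n$ and $Du_n$) shows that $\phi_\eps\circ u\in W^{1,2}(H,\nu)$ with $D(\phi_\eps\circ u)=(\phi_\eps'\circ u)Du$.

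Next I would let $\eps\to 0$. Here $\phi_\eps\circ u\to |u|$ in $L^2(H,\nu)$ by the uniform bound $0\le \phi_\eps\circ u\le |u|\in L^2$ and dominated convergence. For the gradients, $(\phi_\eps'\circ u)Du\to (\mathrm{sign}\,u)\,Du$ pointwise a.e.\ on the set $\{u\ne 0\}$ and these are dominated in norm by $\|Du\|\in L^2$, so again dominated convergence gives convergence in $L^2(H,\nu;H)$ to $(\mathrm{sign}\,u)Du$. Since $D$ is closed, I conclude $|u|\in W^{1,2}(H,\nu)$ with $D|u|=(\mathrm{sign}\,u)Du$. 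The formulas for $u^+=(|u|+u)/2$ and $u^-=(|u|-u)/2$ then follow by linearity.

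Finally, the delicate part is the claim that $Du=0$ a.e.\ on $u^{-1}(0)$; without this, $\mathrm{sign}\,u$ is ambiguous on $\{u=0\}$ and the chain-rule formula is not yet well-defined there. The standard argument is to apply the construction to the smooth truncations $\psi_\delta(t)=\phi_\eps(t)$ more carefully, or more directly to note that for the sequence $\phi_\eps'\circ u$, on the set $\{u=0\}$ one has $\phi_\eps'(0)=0$, so that the limiting identity $D|u|=(\mathrm{sign}\,u)Du$ forces the contribution of $Du$ on $\{u=0\}$ to vanish. Making this rigorous, I would compare the two representations of $D|u|$ obtained by different approximations and deduce $Du\,\one_{\{u=0\}}=0$ a.e.; once this is established, the expressions $Du^+=Du\,\one_{\{u>0\}}=Du\,\one_{\{u\ge 0\}}$ and the analogue for $u^-$ follow immediately, since the two indicator functions differ only on $u^{-1}(0)$ where $Du=0$.
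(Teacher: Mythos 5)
Your first two steps are sound and essentially reproduce the paper's opening argument: the paper composes $f_n(\xi)=\sqrt{\xi^2+1/n}$ with cylindrical approximants $u_n$ in a single diagonal step, while you separate the two limits ($n\to\infty$ at fixed $\eps$, then $\eps\to 0$); both versions rest on the closedness of $D$ from Lemma \ref{l1.3} and give $|u|\in W^{1,2}(H,\nu)$ with $D|u|=\mathrm{sign}(u)\,Du$ (note that, since $\phi_\eps'(0)=0$, what you actually obtain is $D|u|=\mathrm{sign}(u)\one_{\{u\neq 0\}}Du$, i.e.\ the convention $\mathrm{sign}(0)=0$, which is the correct reading of the statement). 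The genuine gap is the third step, the claim $Du=0$ a.e.\ on $u^{-1}(0)$, which you correctly flag as the delicate point but do not prove. Your ``direct'' justification is fallacious: with $\mathrm{sign}(0)=0$ the identity $D|u|=\mathrm{sign}(u)Du$ is vacuous on $\{u=0\}$ and is consistent with $Du$ being \emph{anything} there, so nothing is ``forced''; and ``comparing two representations obtained by different approximations'' cannot work with the symmetric $\phi_\eps$ alone, since every such symmetric regularization produces the same limit $\mathrm{sign}(u)\one_{\{u\neq0\}}Du$.

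There are two concrete ways to close the gap. The paper's route is an integration-by-parts argument specific to this setting: it reduces the claim to \eqref{e1.10}, takes a smooth $\theta$ with $\theta(0)=1$, $0\le\theta\le 1$, $\mathrm{supp}\,\theta\subset[-1,1]$, sets $\theta_\eps(\xi)=\theta(\xi/\eps)$, tests $D_iu$ against $\varphi\,(\theta_\eps\circ u)$ in the formula \eqref{e1.9} (which carries the extra terms $2D_iU$ and $x_i/\lambda_i$ coming from the measure $\nu$), and lets $\eps\to0$; the crux is that $u\,\varphi\,D_i(\theta_\eps\circ u)=u\,\varphi\,\theta'(u/\eps)D_iu/\eps$ is supported in $u^{-1}([-\eps,\eps])$ and bounded in modulus by $\|\theta'\|_\infty\|\varphi\|_\infty|D_iu|$, so dominated convergence kills that term. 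Alternatively, staying entirely within your regularization framework, use \emph{one-sided} regularizers: $g_\eps(t)=\sqrt{t^2+\eps^2}-\eps$ for $t\ge0$ and $g_\eps(t)=0$ for $t<0$ is $C^1$ with $|g_\eps'|\le1$ and $g_\eps'\to\one_{(0,\infty)}$ pointwise; repeating your two-limit argument gives $u^+\in W^{1,2}(H,\nu)$ with $Du^+=\one_{\{u>0\}}Du$, and the reflected regularizer gives $Du^-=-\one_{\{u<0\}}Du$; since $u=u^+-u^-$ and $D$ is a (single-valued, linear) closed operator, $Du=\one_{\{u>0\}}Du+\one_{\{u<0\}}Du$, whence $Du\,\one_{\{u=0\}}=0$ a.e. Either repair must be carried out explicitly; as written, the lemma's second half — on which the identities $Du^+=Du\,\one_{\{u\ge0\}}=Du\,\one_{\{u>0\}}$ depend — is unproved in your proposal.
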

\begin{proof}
Set $f_{n}(\xi) = \sqrt{\xi^2 +1/n}$, $\xi\in \R$. If $(u_n)$ is a sequence of functions in ${\mathcal F}{\mathcal C}^1_b(H) $ that approach  $u$ in $W^{1,2}(H, \nu)$ and pointwise a.e., the functions $f_{n}\circ u_n$ belong to ${\mathcal F}{\mathcal C}^1_b(H) $ and approach $| u |$ in $W^{1,2}(H, \nu)$. Indeed, they converge to $| u |$ in $L^2(H, \nu)$ by the dominated convergence theorem, and  $D(f_{n}\circ u_n) = f_{n}'\circ u_n Du_n$ converge to
${\it sign} \, u\, Du$ in $L^2(H, \nu; H)$. The first statement follows.

Let us prove that $Du$ vanishes a.e. in the kernel of $u$.  It is sufficient to prove that for every $u\in W^{1,2}(H, \nu)$ and $i\in \N$  we have
\begin{equation}
\label{e1.10}
\int_{\{u=0\}} D_iu\,\varphi \,d\nu =0, \quad \varphi\in {\mathcal F}{\mathcal C}^1_b(H). 
\end{equation}
Indeed, since  ${\mathcal F}{\mathcal C}^1_b(H)$ is dense  in $L^2(H, \nu)$, \eqref{e1.10} implies that 
$D_iu\,\one_{\{u=0\}}$ is orthogonal to all elements of   $L^2(H, \nu)$, hence it vanishes a.e.

Let  $\theta :\R\mapsto \R$ be a smooth function with support contained in   $[-1,1]$, with values in  $[0,1]$ and such that  $\theta(0)=1$. For $\eps >0$ set $\theta_{\eps}(\xi) = \theta (\xi/\eps)$. The functions $\theta_{\eps}\circ u$ have  values in  $[0,1]$ and converge pointwise to  $\one_{\{u=0\}}$. Moreover, they belong to  $W^{1,2}(H, \nu)$
and we have  $D_i(\theta_{\eps}\circ u) = (\theta_{\eps}'\circ u) D_iu = (\theta'\circ u/\eps)D_iu/\eps$. Integrating 
we obtain
$$\int_H D_iu\,\varphi \,(\theta_{\eps}\circ u)\, d\nu = - \int_H u\, D_i\varphi \,(\theta_{\eps}\circ u)\, d\nu $$
$$- \int_H u\, \varphi \,D_i(\theta_{\eps}\circ u)\, d\nu +2 \int_H u\, \varphi \,(\theta_{\eps}\circ u)\, D_iU\,d\nu+ \frac{1}{\lambda_i}\int_H x_i\, u\,\varphi \,(\theta_{\eps}\circ u)\, d\nu $$
As $\eps \to 0$ we obtain by the dominated convergence theorem
$$\lim_{\eps \to 0}\int_H D_iu\,\varphi \,(\theta_{\eps}\circ u)\, d\nu = \int_{\{u=0\}} D_iu\,\varphi \,d\nu , $$
$$\lim_{\eps \to 0}\int_H u\, D_i\varphi \,(\theta_{\eps}\circ u)\, d\nu =  \int_{\{u=0\}}  u\, D_i\varphi \, d\nu =0,$$
$$ \lim_{\eps \to 0} \int_H u\, \varphi \,(\theta_{\eps}\circ u)\, D_iU\,d\nu =  \int_{\{u=0\}} 
u\, \varphi \, D_iU\,d\nu =0,$$
$$\lim_{\eps \to 0} \frac{1}{\lambda_i} \int_H x_i\, u\,\varphi \,(\theta_{\eps}\circ u)\, d\nu = \frac{1}{\lambda_i} \int_{\{u=0\}} x_i\, u\,\varphi \,  d\nu =0.$$
The integral $\int_H u\, \varphi \,D_i(\theta_{\eps}\circ u)\, d\nu$ vanishes too as $\eps\to 0$, by the dominated convergence theorem. 
Indeed the support of $u\,\varphi \,D_i(\theta_{\eps}\circ u)$ is contained in  $u^{-1}([-\eps, \eps])$ so that its modulus is bounded by   $\|\theta '\|_{\infty}\|\varphi\|_{\infty}$. Moreover it converges to $0$ pointwise as $\eps\to 0$. So, letting   $\eps\to 0$ we obtain  \eqref{e1.10}.

Once we know that $Du$ vanishes a.e. in the kernel of $u$, the formulas for $Du^+$ and $Du^-$ follow from the equalities $u^+ = (|u|+u)/2$, $u^- = (|u|-u)/2$. 
\end{proof}

\subsubsection{Functional inequalities and embeddings}

Under some additional assumptions important functional inequalities hold in the space $W^{1,2}(H, \nu)$. 

\begin{Hypothesis}
\label{Hyp1}
$U\in W^{1,2}_0(H, \mu)$ and  $\|DU\|\in L^p(H, \mu)$ for some $p>2$. 
\end{Hypothesis}

We recall that since $A$ is invertible and $-A^{-1}$ is nonnegative and compact, then
$$-\omega:= \sup \{ \langle Ax, x\rangle:\;x\in D(A)\} <0. $$

\begin{Proposition}
Let Hypotheses \ref{Hyp} and  \ref{Hyp1} hold. Then the following Poincar\'e and Logarithmic Sobolev inequalities hold. 
\begin{equation}
\label{Poinc}
\int_H \bigg(\varphi -\int_H \varphi 
\,d\nu\bigg)^2  d\nu \leq \frac{1}{2\omega} \int_H \|D\varphi\|^2 d\nu  , \quad \varphi \in W^{1,2}(H, \nu), 
\end{equation}
\begin{equation}
\label{LogSob}
\int_H \varphi^2 \log(\varphi^2) d\nu \leq \frac{1}{\omega}  \int_H \|D\varphi\|^2 d\nu +   \int_H \varphi^2 d\nu \log\bigg( \int_H \varphi^2 d\nu\bigg), \quad \varphi \in W^{1,2}(H, \nu).
\end{equation}
\end{Proposition}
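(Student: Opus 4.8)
The plan is to reduce both inequalities to the finite-dimensional case, apply the Bakry--\'Emery criterion there, and then pass to the limit. The guiding observation is that $Q=-\tfrac12 A^{-1}$ has largest eigenvalue $\|Q\|=1/(2\omega)$, so that $Q^{-1}\ge 2\omega\,I$; since $\nu$ is formally $\propto e^{-V}$ with $V=2U+\tfrac12\langle Q^{-1}\cdot,\cdot\rangle$, the convexity of $U$ yields the uniform curvature bound $D^2V\ge 2\omega\,I$. With $\rho:=2\omega$ this bound produces exactly the Poincar\'e constant $1/\rho=1/(2\omega)$ and the logarithmic Sobolev constant $2/\rho=1/\omega$ appearing in \eqref{Poinc}--\eqref{LogSob}.

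First I would treat a finite-dimensional, smooth model. Replacing $U$ by its conditional expectations $U_n(x):=\int_H U(P_nx+(I-P_n)y)\,\mu(dy)$, which are cylindrical, convex, bounded below and converge to $U$ in $W^{1,2}_{1/2}(H,\mu)$ (here Hypothesis \ref{Hyp1} serves to control the gradients in $L^2(H,\nu)$), I set $\nu_n:=Z_n^{-1}e^{-2U_n}\mu$. Because $U_n$ depends only on $x_1,\dots,x_n$, the measure $\nu_n$ factorizes as a product of a log-concave measure on $\mathrm{span}\{e_1,\dots,e_n\}$ and the Gaussian marginal of $\mu$ on the orthogonal complement. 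On the finite-dimensional factor the potential has Hessian bounded below by $2\omega\,I$ (Gaussian part $\ge 2\omega\,I$, convex part $\ge 0$), and every remaining Gaussian direction has curvature $\ge 2\omega$; hence each factor satisfies Poincar\'e and logarithmic Sobolev with constants $1/(2\omega)$ and $1/\omega$, and so does the product $\nu_n$. Since $U_n$ need not be twice differentiable, I would first mollify it by a further finite-dimensional convolution, apply Bakry--\'Emery to the resulting smooth convex potential, and then let the mollification parameter tend to $0$; the lower bound $2\omega$ is preserved throughout, so the constants do not degrade.

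It then remains to let $n\to\infty$. For a fixed $\varphi\in{\mathcal F}{\mathcal C}^1_b(H)$ the quantities $\int\varphi\,d\nu_n$, $\int\varphi^2\,d\nu_n$, $\int\|D\varphi\|^2\,d\nu_n$ and $\int\varphi^2\log(\varphi^2)\,d\nu_n$ converge to their $\nu$-counterparts, because $e^{-2U_n}\to e^{-2U}$ in $L^1(H,\mu)$ (dominated convergence, $U$ bounded below) and the integrands are bounded and continuous, with the convention $t\log t\to 0$ as $t\to 0^+$. Dominated convergence thus yields \eqref{Poinc} and \eqref{LogSob} for cylindrical $\varphi$, and I would then extend them to arbitrary $\varphi\in W^{1,2}(H,\nu)$ by density of ${\mathcal F}{\mathcal C}^1_b(H)$. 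The Poincar\'e inequality passes to the limit directly, both sides being continuous for the $W^{1,2}(H,\nu)$ norm; for the logarithmic Sobolev inequality I would first truncate $\varphi$ to reduce to bounded functions, pass to the limit using lower semicontinuity of the entropy together with Fatou's lemma on the left and convergence of the Dirichlet form on the right, and finally remove the truncation.

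The main obstacle I anticipate is the low regularity of $U$: since $U$ is only convex, lower semicontinuous and Sobolev, the $\Gamma_2$-calculus underlying Bakry--\'Emery cannot be applied to $\nu$ directly, and the whole argument is organized around the double approximation (conditional expectation in the variable, then mollification) that smooths $U$ while keeping the curvature lower bound $2\omega$ uniform in both parameters. A secondary technical point is the limit passage in the entropy term of \eqref{LogSob}, where the nonlinearity $t\mapsto t\log t$ forces the truncation argument above rather than a plain continuity argument.
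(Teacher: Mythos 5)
Your proposal is correct, but it is not the argument the paper uses: the paper offers no proof of this Proposition at all, deferring entirely to \cite[Section 12.3.1]{DPZ3}. The proof in that reference is dynamic rather than static: after regularizing $U$ (finite-dimensional, Lipschitz/Moreau--Yosida approximations), one uses that the drift $Ax-DU(x)$ is dissipative with constant $-\omega$ to get the commutation estimate $\|DP_t\varphi\|^2\le e^{-2\omega t}P_t\big(\|D\varphi\|^2\big)$ for the associated transition semigroup, and then obtains Poincar\'e and log-Sobolev by the Deuschel--Stroock interpolation (differentiating $s\mapsto P_s\big((P_{t-s}\varphi)^2\big)$, respectively the entropy, along the semigroup and integrating in time), finally passing to the limit in the approximation. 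Your route replaces the semigroup machinery by the static finite-dimensional Bakry--\'Emery criterion plus tensorization and a double approximation (conditional expectation in $n$, mollification in $\eps$); the curvature input --- $Q^{-1}\ge 2\omega I$ together with convexity of $U$ --- and the resulting constants $1/(2\omega)$ and $1/\omega$ are identical in the two proofs. What your approach buys is self-containedness (nothing about the approximating dynamics needs to be constructed, only the classical Bakry--\'Emery theorem on $\R^n$ is invoked), at the price of the tensorization/mollification bookkeeping and a more delicate limit passage in the entropy term.

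Two points should be made explicit for your argument to be complete. First, the mollification step silently assumes $U_n$ is finite: this is true, but needs the remark that $U_n<\infty$ a.e.\ on $\R^n$ (Fubini, since $U\in L^2(H,\mu)$ and $U\ge\inf U$), and that a convex function on $\R^n$ finite a.e.\ has effective domain of full measure, hence equal to all of $\R^n$; thus $U_n$ is finite, convex and locally Lipschitz everywhere, so $U_n*\theta_\eps$ is a genuine smooth convex function and Bakry--\'Emery applies. Without this remark, convolving a convex function that takes the value $+\infty$ could produce $+\infty$ on an open set, and the criterion would not apply. Second, Hypothesis \ref{Hyp1} is never actually used in your proof: for the passage $n\to\infty$ you only need $e^{-2U_n}\to e^{-2U}$ in $L^1(H,\mu)$, which follows from martingale convergence of the conditional expectations $U_n$ and domination by $e^{-2\inf U}$, so the parenthetical claim that Hypothesis \ref{Hyp1} ``serves to control the gradients'' should be dropped. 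This is harmless --- you then prove the statement under Hypothesis \ref{Hyp} alone, which is more than is asserted --- but it should not be presented as if Hypothesis \ref{Hyp1} played a role.
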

\noindent For the proof we refer to \cite[Section 12.3.1]{DPZ3}.

Another useful property is the compact embedding of $W^{1,2}(H, \nu)$ in $L^2(H, \nu)$; see \cite{DPDG}. 

\begin{Proposition}
\label{p1.7}
Under Hypotheses \ref{Hyp} and  \ref{Hyp1},  $W^{1,2}(H, \nu)$ is compactly embedded in $L^2(H, \nu)$. 
\end{Proposition}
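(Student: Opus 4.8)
The plan is to deduce the infinite–dimensional statement from a finite–dimensional compact embedding together with a tail estimate that is uniform over bounded subsets of $W^{1,2}(H,\nu)$. I use the elementary criterion: a bounded family $\mathcal U\subset L^2(H,\nu)$ is relatively compact as soon as, for every $\eps>0$, there is a relatively compact family $\mathcal U_\eps\subset L^2(H,\nu)$ with $\dist_{L^2}(u,\mathcal U_\eps)<\eps$ for all $u\in\mathcal U$. So let $(u_n)$ be bounded in $W^{1,2}(H,\nu)$, say $\int_H(|u_n|^2+\|Du_n\|^2)\,d\nu\le M$. For $m\in\N$ write $\mathcal F_m=\sigma(x_1,\dots,x_m)$ and let $\Pi_m u=\E_\nu[u\mid\mathcal F_m]$ be the conditional expectation, a function of $(x_1,\dots,x_m)$ only; since $\Pi_m$ is a contraction in $L^2(H,\nu)$, each family $(\Pi_m u_n)_n$ is bounded in $L^2(H,\nu)$. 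The two ingredients I would prove are: (a) $\sup_n\|u_n-\Pi_m u_n\|_{L^2(H,\nu)}\to0$ as $m\to\infty$; and (b) for each fixed $m$, $(\Pi_m u_n)_n$ is relatively compact in $L^2(H,\nu)$. A diagonal extraction over $m$, combined with (a), then yields a subsequence of $(u_n)$ that is Cauchy in $L^2(H,\nu)$.

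The tail estimate (a) is where convexity enters decisively, and it is the clean part. Conditionally on $(x_1,\dots,x_m)$, the law of the remaining coordinates under $\nu$ has, in the variables $(x_k)_{k>m}$, a density proportional to $\exp\!\bigl(-2U(x)-\tfrac12\langle Q^{-1}x,x\rangle\bigr)$. By Hypothesis \ref{Hyp} the function $U$ is convex, so this conditional density is log-concave with Hessian in the tail variables bounded below by $\mathrm{diag}(\lambda_k^{-1})_{k>m}\ge\beta_m^{-1}I$, where $\beta_m:=\sup_{k>m}\lambda_k$. The Brascamp–Lieb variance inequality then gives a Poincaré inequality for the conditional measure with constant $\beta_m$, the gradient being taken in the directions $e_k$, $k>m$. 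Applying this fibrewise to a cylindrical $u$ and integrating in $(x_1,\dots,x_m)$ gives
\begin{equation*}
\int_H|u-\Pi_m u|^2\,d\nu\le\beta_m\int_H\|(I-P_m)Du\|^2\,d\nu\le\beta_m\int_H\|Du\|^2\,d\nu ,
\end{equation*}
and this extends to all of $W^{1,2}(H,\nu)$ by density. Since $Q$ is of trace class, $\lambda_k\to0$, hence $\beta_m\to0$ and $\sup_n\|u_n-\Pi_m u_n\|_{L^2(H,\nu)}^2\le M\beta_m\to0$.

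For (b) I would identify $\mathcal F_m$–measurable functions with functions on $\R^m$ carrying the marginal $\nu_m:=(P_m)_\#\nu$. This density is log-concave (image of a log-concave measure under a linear map) and is dominated by a Gaussian because $U$ is bounded below, so $\nu_m$ is tight and inherits the logarithmic Sobolev inequality \eqref{LogSob}; consequently the embedding $W^{1,2}(\R^m,\nu_m)\hookrightarrow L^2(\R^m,\nu_m)$ is compact in the fixed dimension $m$. It therefore suffices to bound $(\Pi_m u_n)_n$ in $W^{1,2}(\R^m,\nu_m)$, and this is exactly the main obstacle. The difficulty is that $\Pi_m$ does not commute with the derivatives under the non-product measure $\nu$: differentiating the conditional expectation produces, for $k\le m$, the identity $D_k\Pi_m u=\Pi_m(D_k u)-2\,\mathrm{Cov}_{\nu(\cdot\mid\mathcal F_m)}(u,D_kU)$, whose first term is harmless ($\sum_k\|\Pi_m D_k u\|_{L^2}^2\le M$) but whose correction is controlled only through the quantity $\int_H u^2\|DU\|^2\,d\nu$. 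Taming this term is precisely where the coupling through $U$ bites, and where the integrability $\|DU\|\in L^p(H,\mu)$ with $p>2$ of Hypothesis \ref{Hyp1} must be combined with the higher integrability of $u_n$ afforded by \eqref{LogSob}; I expect this to be the genuinely delicate step, whereas the tail estimate and the diagonal gluing are comparatively routine.
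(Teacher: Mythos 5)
Your scheme is clean in outline, and step (a) is sound: the fibrewise Poincar\'e inequality with constant $\beta_m=\sup_{k>m}\lambda_k$ is exactly the conditional version of \eqref{Poinc} (whose constant $1/2\omega$ equals $\sup_k\lambda_k$, by $Q=-\tfrac12 A^{-1}$), and the diagonal extraction is routine. But the proof is not complete, and the hole is precisely where you stop: step (b) is asserted, not proved, and under the standing hypotheses it cannot be proved by the estimate you point to. The correction term $\mathrm{Cov}_{\nu(\cdot\mid\mathcal F_m)}(u,D_kU)$ in $D_k\Pi_m u$ is controlled only through quantities of the type $\int_H u^2\|DU\|^2\,d\nu$, and this integral can be infinite for $u\in W^{1,2}(H,\nu)$: Hypothesis \ref{Hyp1} gives $\|DU\|^2\in L^{p/2}(H,\nu)$ with $p>2$ possibly arbitrarily close to $2$, so finiteness would require $u\in L^{2p/(p-2)}(H,\nu)$, an arbitrarily high power, whereas the logarithmic Sobolev inequality \eqref{LogSob} only yields $u^2\log (u^2)\in L^1(H,\nu)$ --- no $L^{2+\delta}$ bound at all, let alone one that improves as $p\downarrow 2$. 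So the $W^{1,2}(\R^m,\nu_m)$-boundedness of $(\Pi_m u_n)_n$ is a genuinely missing ingredient, not a technical detail, and without it the finite-dimensional compactness has nothing to act on.

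The paper avoids the conditional-expectation decomposition altogether, and the way it does so is instructive because it defuses exactly the product $u\cdot DU$ that blocks you. Given $(f_n)$ bounded in $W^{1,2}(H,\nu)$, it first uses \eqref{LogSob} (as you implicitly do) to get uniform integrability of $(f_n^2)$, so that only an a.e.\ convergent subsequence is needed. It then considers $f_ne^{-U}$ and shows this sequence is bounded in the \emph{Gaussian} Sobolev space $W^{1,q}_0(H,\mu)$ with $q=2p/(2+p)\in(1,2)$: in $D(f_ne^{-U})=Df_n\,e^{-U}-f_nDU\,e^{-U}$ the dangerous term is handled by H\"older, pairing $\|f_ne^{-U}\|_{L^2(H,\mu)}$ against $\|DU\|\in L^p(H,\mu)$ --- dropping the exponent below $2$ is precisely what makes $p>2$ sufficient, with no higher integrability of $f_n$ required. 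The compactness of the embedding $W^{1,q}_0(H,\mu)\subset L^q(H,\mu)$, quoted from \cite{CG}, then gives a subsequence converging in $L^q(H,\mu)$, hence a further one converging $\mu$-a.e.\ and so $\nu$-a.e.\ (as $\nu\ll\mu$), and uniform integrability concludes. If you want to salvage your approach, this multiplication by $e^{-U}$ is the missing idea: it decouples $u$ from $DU$ at the single point where your argument multiplies them, at the harmless price of lowering the integrability exponent.
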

\begin{proof}
Let  $(f_n)$ be a bounded sequence  in $W^{1,2}(H, \nu)$. We look for a subsequence that converges in $L^2(H, \nu)$. 
By the Log--Sobolev inequality \eqref{LogSob} the sequence is uniformly integrable, and hence it is sufficient to find a subsequence that converges almost everywhere. 

The sequence $(f_n\,e^{-U})$ is bounded  in $W^{1,q}_0(H, \mu)$, with $q=2p/(2+p) \in (1, 2)$. Indeed, it is bounded in $L^2(H, \mu)$, and  hence it is bounded in $L^q(H, \mu)$, moreover  $D(f_n\,e^{-U}) = D f_n\,e^{-U} - f_n DU\,e^{-U}$. Once again, $ \| D f_n\,e^{-U}\|$ is bounded in $L^2(H, \mu)$, while the second addendum $f_n DU\,e^{-U}$ satisfies
$$\int_H \|f_n DU\,e^{-U}\|^qd\mu \leq \bigg(\int_H f_n^2 e^{-2U}d\mu\bigg)^{q/2}
\bigg(\int_H \|DU\|^{2q/(2-q)}  d\mu\bigg)^{(2-q)/q} $$
$$= \|f_n\|_{L^2(H, \nu)}^q \bigg(\int_H \|DU\|^{p}  d\mu\bigg)^{(2-q)/q}$$
so that it is bounded in $L^q(H, \mu)$. 

Since the embedding  $W^{1,q}_0(H, \mu)\subset L^q(H, \mu)$ is compact \cite{CG}, there exists a subsequence that converges
 in $L^q(H, \mu)$ and a further subsequence that converges pointwise $\mu$-a.e. and also  $\nu$-a.e, since $\nu$ is absolutely continuous with respect to  $\mu$. 
\end{proof}

\subsection{Moreau--Yosida approximations}

An important tool in our analysis are the  Moreau--Yosida approximations of $U$ defined for $\alpha >0$ by 
\begin{equation}
\label{e1.8}
U_\alpha(x)=\inf\left\{U(y)+\frac{|x-y|^2}{2\alpha},\;y\in H\right\},\quad x\in H .
\end{equation}
We recall that $U_{\alpha}(x) \leq U(x)$ and $U_{\alpha}(x)$ converges monotonically  to $U(x)$ for each $x$ as $\alpha \to 0$. Moreover, each $U_{\alpha}$ is differentiable at any point,   $DU_{\alpha}$ is Lipschitz continuous, and
 $\|DU_{\alpha}\|$ converges monotonically  to $\|D_0U\|$,  at any $x$ such that the subdifferential of $U(x)$ is not empty.  Here, $D_0U(x)$ is the element with minimal norm in the subdifferential of $U(x)$. At such points we have
\begin{equation}
\label{e4.5}
 \|DU_{\alpha}(x) - D_0U(x) \|^2 \leq \|D_0U(x)\|^2 -  \|DU_{\alpha}(x) \|^2 ;
 \end{equation}
see, for example,  \cite[Chapter 2]{Brezis}. 
If in addition $U\in C^2$, then $D_0U=DU$, and we have convergence of the second order derivatives, as the next lemma shows. 
  
\begin{Lemma}
\label{l2.9}
Let $U:H\mapsto \R$ be convex and $C^2$. Then  $\lim_{\alpha \to 0} D^2U_{\alpha}(x) = D^2U (x) $ in ${\mathcal L}(H)$ for all $x\in H$. 
\end{Lemma}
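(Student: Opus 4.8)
The plan is to derive an explicit formula for $D^2U_\alpha$ in terms of $D^2U$ evaluated at the proximal point, and then pass to the limit exploiting the positive semidefiniteness of the Hessian. Since $U$ is convex and $C^2$, for each $x$ the infimum in \eqref{e1.8} is attained at a unique minimizer $J_\alpha(x)$, characterized by the Euler equation $J_\alpha(x)+\alpha DU(J_\alpha(x))=x$; that is, $J_\alpha=(I+\alpha DU)^{-1}$ is the resolvent of the monotone operator $DU$. The classical identities $DU_\alpha(x)=DU(J_\alpha(x))=(x-J_\alpha(x))/\alpha$ are the starting point. First I would record that $J_\alpha(x)\to x$ as $\alpha\to 0$: indeed $\|x-J_\alpha(x)\|=\alpha\|DU_\alpha(x)\|\le\alpha\|DU(x)\|$ by \eqref{e4.5} (recalling $D_0U=DU$ here), which tends to $0$.

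Next I would establish that $J_\alpha$ is Fr\'echet differentiable with an explicit derivative. Applying the implicit function theorem to $\Phi(x,y):=y+\alpha DU(y)-x$, which is $C^1$ with $D_y\Phi(x,y)=I+\alpha D^2U(y)$, and observing that $I+\alpha D^2U(y)\ge I$ is boundedly invertible because $D^2U(y)\ge 0$, I obtain that $J_\alpha$ is $C^1$ with $DJ_\alpha(x)=(I+\alpha D^2U(J_\alpha(x)))^{-1}$. Differentiating $DU_\alpha=DU\circ J_\alpha$ by the chain rule then yields, writing $M_\alpha:=D^2U(J_\alpha(x))$,
\begin{equation*}
D^2U_\alpha(x)=D^2U(J_\alpha(x))\,DJ_\alpha(x)=M_\alpha(I+\alpha M_\alpha)^{-1}.
\end{equation*}
The same expression follows, alternatively, by differentiating $DU_\alpha(x)=(x-J_\alpha(x))/\alpha$.

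Finally I would pass to the limit in ${\mathcal L}(H)$. Since $D^2U$ is continuous and $J_\alpha(x)\to x$, we have $M_\alpha\to D^2U(x)$ in operator norm; in particular $\|M_\alpha\|\le C$ for all small $\alpha$. Because $M_\alpha$ is self-adjoint and nonnegative, the spectrum of $(I+\alpha M_\alpha)^{-1}$ lies in $(0,1]$, so $\|(I+\alpha M_\alpha)^{-1}\|\le 1$ and
\begin{equation*}
\|(I+\alpha M_\alpha)^{-1}-I\|=\|\alpha M_\alpha(I+\alpha M_\alpha)^{-1}\|\le\alpha\|M_\alpha\|\le\alpha C.
\end{equation*}
Writing $D^2U_\alpha(x)-D^2U(x)=M_\alpha\big((I+\alpha M_\alpha)^{-1}-I\big)+\big(M_\alpha-D^2U(x)\big)$ and estimating each summand then gives convergence to $0$.

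The main obstacle I anticipate is the second step: justifying differentiability of the resolvent $J_\alpha$ in the infinite-dimensional setting together with the formula $DJ_\alpha=(I+\alpha D^2U\circ J_\alpha)^{-1}$. The implicit function theorem does apply, since $I+\alpha D^2U(y)$ has a bounded inverse with norm $\le 1$ (its spectrum is bounded below by $1$), but one must verify carefully that $DU:H\to H$ is genuinely $C^1$ with derivative $D^2U$, and that the local $C^1$ branches delivered by the theorem agree with the globally defined single-valued resolvent. Once the formula for $D^2U_\alpha$ is in hand, the limiting argument is a routine consequence of the order bound $0\le(I+\alpha M_\alpha)^{-1}\le I$.
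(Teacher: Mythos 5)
Your proof is correct and follows essentially the same route as the paper's: both identify $DU_\alpha(x)=DU(y_\alpha(x))$ with $y_\alpha=(I+\alpha DU)^{-1}$, show $y_\alpha(x)\to x$ via monotonicity, differentiate the resolvent identity to obtain $D^2U_\alpha(x)=D^2U(y_\alpha(x))\,(I+\alpha D^2U(y_\alpha(x)))^{-1}$, bound the inverse in operator norm by $1$ using convexity, and pass to the limit using continuity of $D^2U$. The only distinction is one of detail, not of method: you justify the differentiability of the resolvent explicitly through the implicit function theorem and use the bound $\|DU_\alpha(x)\|\le\|D_0U(x)\|$ from \eqref{e4.5}, whereas the paper asserts that $y_\alpha$ is clearly $C^1$ and derives a slightly cruder bound $\|DU(y_\alpha(x))\|\le\|DU(x)\|/(1-\alpha)$ directly from monotonicity.
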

\begin{proof}
For each $x\in H$ set $y_{\alpha }(x) = (I + \alpha DU)^{-1}(x)$, so that
\begin{equation}
\label{e2.21}
y_{\alpha }(x) + \alpha DU(y_{\alpha }(x)) = x, 
\end{equation}
and by  \cite[Chapter 2]{Brezis}, 
\begin{equation}
\label{DUalpha}
DU_{\alpha}(x) = DU(y_{\alpha}).
\end{equation}
Since $U$ is convex, then $\langle DU(x) - DU(y_{\alpha }(x)) , \alpha DU(y_{\alpha }(x)) \rangle $
$=$ $  \langle DU(x) - DU(y_{\alpha }(x)) ,x-y_{\alpha }(x)\rangle $ $\geq 0$. Taking the scalar product with $DU(y_{\alpha }(x))$ yields  $\| DU(y_{\alpha }(x))\| \leq \|DU(x)\|/(1-\alpha)$, and letting $\alpha \to 0$ in  \eqref{e2.21} we get 
$$
\lim_{\alpha \to 0} y_{\alpha}(x) = x,\quad\forall\;x\in H.
$$
Now it is clear that $y_{\alpha}$ is of class $C^1$, and 
differentiating \eqref{e2.21} yields
\begin{equation}
\label{e2.22}
y_{\alpha }'(x) + \alpha D^2U(y_{\alpha }(x))y_{\alpha }'(x)  = I.
\end{equation}
Since $U$ is convex, 
$$
\|y_{\alpha }'(x) \|_{{\mathcal L}(H)}\le 1,
$$
so that, letting $\alpha \to 0$ in \eqref{e2.22} and recalling that $D^2U$ is continuous,  we obtain 
$$
\lim_{\alpha\to 0}y_{\alpha }'(x)=I.
$$
On the other hand, differentiating   identity  \eqref{DUalpha} gives
$D^2U_{\alpha}(x) $ $=$ $ D^2U(y_{\alpha }(x))\cdot y_{\alpha }'(x)$ which yields the statement. 
\end{proof}

\section{Elliptic problems}
\label{ellittica}

This section is devoted to the main result of the paper. In Section 3.1 we prove existence and uniqueness of  a weak solution $u$ of equation \eqref{e1.1}. Section 3.2 is devoted to the particular case that $DU$  is Lipschitz continuous. This is an intermediate step in order to prove  in Section 3.3 that under Hypothesis  \ref{Hyp}  we have
$$
u\in W^{2,2}(H,\nu)\cap W^{1,2}_{-1/2}(H,\nu).
$$
In Section 3.4 we show that if in addition $U$ is twice continuously differentiable then
$$
\int_H\langle D^2U(x)Du(x),Du(x)\rangle\,\nu(dx)<\infty.
$$

\subsection{Weak solutions}

We consider a Kolmogorov  operator defined on ${\mathcal F}{\mathcal C}^2_b(H)$ by 
\begin{equation}
\label{e1.6}
{\mathcal K}\varphi = \frac{1}{2}\;\Tr\; [ D^2\varphi]+ \frac{1}{2}\, \langle   x,Q^{ -1}D\varphi\rangle  - \langle DU(x),  D\varphi\rangle .
\end{equation}
Using the partial derivatives $D_k $ and $D_{kk}$, ${\mathcal K}$ may be rewritten as 
$${\mathcal K}   \varphi(x) = \frac{1}{2}\sum_{k=1}^{\infty}   D_{kk}\varphi(x) -\frac{1}{2} \sum_{k=1}^{\infty}
\lambda_k^{-1}  x_k D_{k}\varphi(x) -  \sum_{k=1}^{\infty}
 D_kU(x) D_{k}\varphi(x).$$

The measure $\nu$ enjoys the following important symmetrizing property:

\begin{Proposition}
\label{p2.1}
For all $\varphi \in   {\mathcal F}{\mathcal C}^2_b(H)$, $\psi \in {\mathcal F}{\mathcal C}^1_b(H)$ we have
 \begin{equation}
 \label{e2.1}
 \int_H\mathcal  K\varphi\,\psi\,d\nu  = -\frac12\int_H\langle  D\varphi, D\psi\rangle
d\nu.
 \end{equation} 
\end{Proposition}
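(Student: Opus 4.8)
The plan is to reduce the identity to the integration-by-parts formula \eqref{e1.9}, using crucially that both $\varphi$ and $\psi$ are cylindrical, so that every series appearing below is in fact a finite sum and no convergence issue arises. First I would write $\mathcal K\varphi$ in the explicit form
$$
\mathcal K\varphi = \frac12\sum_{k} D_{kk}\varphi - \frac12\sum_{k}\lambda_k^{-1} x_k D_k\varphi - \sum_{k} D_kU\,D_k\varphi ,
$$
multiply by $\psi$ and integrate against $\nu$. Since $\varphi\in{\mathcal F}{\mathcal C}^2_b(H)$ depends on finitely many coordinates, $D_k\varphi=D_{kk}\varphi=0$ for $k$ large, so each sum is finite and may be taken out of the integral.

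The heart of the matter is the second order term. Writing $D_{kk}\varphi=D_k(D_k\varphi)$ and observing that $\varphi\in{\mathcal F}{\mathcal C}^2_b(H)$ forces $D_k\varphi\in{\mathcal F}{\mathcal C}^1_b(H)$, I may apply \eqref{e1.9} with $\varphi$ replaced by $D_k\varphi$ and $h=k$, obtaining
$$
\int_H D_{kk}\varphi\,\psi\,d\nu = -\int_H D_k\varphi\,D_k\psi\,d\nu + 2\int_H D_kU\,D_k\varphi\,\psi\,d\nu + \frac1{\lambda_k}\int_H x_k D_k\varphi\,\psi\,d\nu .
$$
Each of these integrals is finite: the first because the derivatives of $\varphi,\psi$ are bounded, the last because $x_k\in L^2(H,\nu)$, and the middle one because $D_kU\in L^2(H,\nu)$ by Hypothesis \ref{Hyp}.

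Substituting this into $\tfrac12\sum_k\int_H D_{kk}\varphi\,\psi\,d\nu$ and collecting it with the two remaining terms of $\int_H\mathcal K\varphi\,\psi\,d\nu$, I expect the drift contributions to cancel in pairs: the term $\sum_k\int_H D_kU\,D_k\varphi\,\psi\,d\nu$ arising from the integration by parts cancels the last term of $\mathcal K\varphi$, while $\tfrac12\sum_k\lambda_k^{-1}\int_H x_k D_k\varphi\,\psi\,d\nu$ cancels the Ornstein--Uhlenbeck drift term. What remains is exactly
$$
-\frac12\sum_k\int_H D_k\varphi\,D_k\psi\,d\nu = -\frac12\int_H\langle D\varphi,D\psi\rangle\,d\nu ,
$$
which is \eqref{e2.1}. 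The computation is essentially bookkeeping; the only point demanding care is the legitimacy of invoking \eqref{e1.9} for $D_k\varphi$, which is guaranteed by the passage $\varphi\in{\mathcal F}{\mathcal C}^2_b(H)\Rightarrow D_k\varphi\in{\mathcal F}{\mathcal C}^1_b(H)$ together with the integrability of $D_kU$ and $x_k$, so I do not anticipate a genuine obstacle.
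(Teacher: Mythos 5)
Your proposal is correct and follows essentially the same route as the paper: the paper's proof is exactly the application of \eqref{e1.9} with $\varphi$ replaced by $D_k\varphi$ (legitimate since $D_k\varphi\in{\mathcal F}{\mathcal C}^1_b(H)$), after which the drift terms cancel and only $-\frac12\int_H\langle D\varphi,D\psi\rangle\,d\nu$ remains, all sums being finite because the functions are cylindrical. Your bookkeeping, including the signs and the cancellation of both the $D_kU$ and the Ornstein--Uhlenbeck drift contributions, matches the paper's computation precisely.
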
 
\begin{proof} Recalling \eqref{e1.9} we get
$$
\begin{array}{l}
\ds\frac{1}{2}\int_H   \sum_{k=1}^{\infty}  D_{kk}\varphi(x)\,\psi(x)\,d\nu= - \frac{1}{2} \int_H \sum_{k=1}^{\infty}  D_{k}\varphi(x)D_k \psi(x) \,d\nu
\\
\\
\ds + \int_H 
 \sum_{k=1}^{\infty} ( D_kU(x) D_k\varphi(x) + \frac{1}{2\lambda_k} x_k D_k\varphi(x) )
  \,d\nu ,
\end{array}
$$
and the conclusion follows (note that all series are finite sums in our case). \end{proof}

 Let $f\in L^2(H,\nu)$, $\lambda>0$. Taking into account formula 
 \eqref{e2.1}, we say that $u\in W^{1,2} (H,\nu)$ is a weak solution of   equation \eqref{e1.1}
 if we have
\begin{equation}
 \label{e2.2}
\lambda\int_H u\,\varphi\,d\nu+\frac12\int_H\langle Du,D\varphi\rangle\,d\nu=\int_Hf\,\varphi\,d\nu,\quad \forall\varphi\in W^{1,2} (H,\nu).
 \end{equation}
Since $ {\mathcal F}{\mathcal C}^1_b(H)$ is dense in  $W^{1,2} (H,\nu)$, it is enough that the above equality is satisfied for  every $\varphi\in {\mathcal F}{\mathcal C}^1_b(H)$. 

The function ${\mathcal A} : (W^{1,2} (H,\nu))^2\mapsto \R$, 
${\mathcal A} (u,\varphi )=  \lambda\int_H u\,\varphi\,d\nu+\frac12\int_H\langle Du,D\varphi\rangle\,d\nu$ is bilinear, continuous and coercive, while the function
$F: W^{1,2} (H,\nu) \mapsto \R$,  $F(\varphi) =  \int_Hf\,\varphi\,d\nu$, is linear and continuous. By the Lax--Milgram theorem there exists a unique 
$u\in W^{1,2} (H,\nu)$ such that ${\mathcal A} (u,\varphi )= F(\varphi)$ for each $\varphi\in W^{1,2} (H,\nu)$; namely 
equation \eqref{e1.1} has a unique weak solution $u\in W^{1,2} (H,\nu)$.

We denote by  $K :D(K) \subset L^2(H, \nu)\mapsto L^2(H, \nu)$ the operator associated to the quadratic form  $\mathcal A$ in $W^{1,2} (H,\nu)$. So, the domain $D(K )$  consists of all $u\in W^{1,2} (H,\nu)$ such that there exists $v\in L^2(H, \nu)$ satisfying 
 $$\frac{1}{2}\int_H\langle  Du, D\varphi\rangle\,d\nu = - \langle v, \varphi\rangle_{L^2(H, \nu)}$$
 for all $\varphi\in  W^{1,2} (H,\nu)$, or equivalently for all $\varphi \in {\mathcal F}{\mathcal C}^1_b(H)$. 
In this case, $v =  Ku$. 
The weak solution $u$ to  \eqref{e1.1} belongs to $D(K)$, and it is just $(\lambda I- K)^{-1}f$.

 \begin{Remark}
 \label{r2.2}
 \em We have  $ {\mathcal F}{\mathcal C}^2_b(H) \subset  D(K )$. In fact, for $u\in {\mathcal F}{\mathcal C}^2_b(H) $, integrating by parts we obtain  
\begin{equation}
\label{e2.3}
\frac{1}{2} \int_{H} \langle  Du,  D\varphi\rangle \,d\nu = -  \int_H  (\mathcal K u(x)) \varphi(x) \nu(dx),
\end{equation}
for all $  \varphi \in  {\mathcal F}{\mathcal C}^1_b(H)$. Here $ \mathcal K  u \in L^2(H,\nu) $ since it consists of the sum of a finite number of addenda, each of them in $L^2(H,\nu) $. Hence, $u\in D(K )$ and $K  u =  \mathcal K u$.  
\end{Remark}

To study the domain of $K$ it is convenient to introduce a family of  approximating problems, with  $U$ replaced by its Moreau--Yosida approximations $U_\alpha$ defined in 
 \eqref{e1.8}. 
Since $DU_{\alpha}$ is Lipschitz continuous, in the next section we consider the case of functions $U$ with Lipschitz gradient.

\subsection{The case of Lipschitz continuous $DU$}
\label{sect:Lipschitz}

Here we assume that  $U:H\mapsto \R$ is a differentiable convex function bounded from below and with Lipschitz continuous gradient. Since $DU$ is Lipschitz, it has at most linear growth, and $U$ has at most quadratic growth. Therefore, it satisfies Hypothesis \ref{Hyp}. 

The aim of this section is to show that for every $f\in L^2(H, \nu)$ the weak solution to \eqref{e1.1} belongs to $W^{2,2}(H, \nu) $ $\cap$ $W^{1,2}_{-1/2}(H, \nu)$ and   the estimate
 \begin{equation}
\label{e2.4}
 \lambda\int_H | Du|^2d\nu +\frac12\int_H \;Tr\;[ (D^2u )^2]d\nu 
  +\int_H\|Q^{-1/2 }Du\|^2d\nu +\int_H \langle D^2U  Du, Du \rangle d\nu 
  \le 4\int_H f^2d\nu  
\end{equation}
holds. 

Note that $U\notin W^{2,2}(H, \mu)$ in general. The term $\langle D^2U  Du, Du \rangle$ in the last integral is meant as follows: since $H$ is separable, and $\mu$ is non degenerate, by \cite[Theorem 6]{Ph} 
$DU:H\mapsto H$ is Gateaux differentiable $\nu$ almost everywhere. The Gateaux second order derivatives $D_{hk}U$ are bounded by a constant independent of $h$, $k$, since  $DU$ is Lipschitz continuous so that the Lipschitz constant of each $D_kU$ is bounded by a constant independent of $k$. Since $u\in W^{1,2}_{-1/2}(H, \nu)$ the double series  $\sum_{h,k} D_{hk}U  D_hu D_ku$ is well defined and belongs to $L^1(H, \nu)$. Indeed, 
$$\bigg|\sum_{h,k=1}^{\infty} D_{hk}U  D_hu D_ku\bigg| \leq C\bigg(\sum_{k=1}^{\infty}|D_ku|\bigg)^2 = C\bigg(\sum_{k=1}^{\infty}\lambda_k^{-1/2}|D_ku| \lambda_k^{1/2}\bigg)^2\leq C\|Q^{-1/2 }Du\|^2\,\mbox{\rm Tr}\,Q .$$
  
Moreover, we shall show that the weak solution is also a strong solution in the Friedrichs  sense. 

\begin{Definition}
A function $u\in L^2(H, \nu)$ is called {\em strong solution} (in the Friedrichs  sense) to \eqref{e1.1} if
there is a sequence $(u_n)$ of ${\mathcal F}{\mathcal C}^2_b(H)$ functions   that converge to $u$ in $L^2(H, \nu)$ and such that $\lambda u_n - {\mathcal K} u_n \to f$ in $L^2(H, \nu)$. 
\end{Definition}

\vspace{3mm}

In fact, we begin with the strong solution. The procedure  is the following: we show that the operator ${\mathcal K}  : {\mathcal F}{\mathcal C}^3_b(H)\mapsto L^2(H, \nu)$ is dissipative, so that it is closable. Then we show that   $(\lambda   - {\mathcal K} )({\mathcal F}{\mathcal C}^3_b(H))$ is dense in $L^2(H, \nu)$ for every $\lambda >0$. This implies that the closure $\overline{{\mathcal K} }$ of ${\mathcal K} $ generates a contraction semigroup in $L^2(H, \nu)$, and ${\mathcal F}{\mathcal C}^3_b(H)$ is a core, that is, it is dense in $D(\overline{{\mathcal K} })$ endowed with the graph norm.  In particular, for every $f\in L^2(H, \nu)$  and $\lambda >0$, equation \eqref{e1.1} has a unique  solution $u\in D(\overline{{\mathcal K} })$, which is a strong solution by definition. Then we show that $D(\overline{{\mathcal K} } )\subset W^{2,2} (H, \nu)$ and that \eqref{e2.4} holds. 
Eventually, we prove that the strong solution coincides with the weak solution. 

\subsubsection{${\mathcal K}  : {\mathcal F}{\mathcal C}^3_b(H)\mapsto L^2(H, \nu)$ is dissipative.}

This is just a simple consequence of the integration formula \eqref{e2.3}, taking   
$u =\varphi \in  {\mathcal F}{\mathcal C}^3_b(H)$.

\subsubsection{$(\lambda  I - {\mathcal K} )({\mathcal F}{\mathcal C}^3_b(H))$ is dense in $L^2(H, \nu)$.} 
\label{core}

We shall approach every element $f\in {\mathcal F}{\mathcal C}^{\infty}_b(H)$ by functions $g$  of the type $  g =\lambda v - {\mathcal K} v $, first with with $v \in {\mathcal F}{\mathcal C}^2_b(H)$ and then with with $v \in {\mathcal F}{\mathcal C}^3_b(H)$. This will be done using existence and regularity results for differential equations in finite dimensions. Since  ${\mathcal F}{\mathcal C}^{\infty}_b(H)$ is dense in $L^2(H, \nu)$, our aim will be achieved.

We recall that  $P_n$ is the orthogonal projection on the linear span of $e_1, \ldots, e_n$. 
We identify $P_n(H)$ with $\R^n$, by the obvious isomorphism $\R^n\mapsto P_n(H)$, $\xi \mapsto \sum_{k=1}^n \xi_k e_k$. 
The induced Gaussian measure in $\R^n$ is just ${\mathcal N}_{\;0, Q_n}$ where $Q_n=$ diag$(\lambda_1, \ldots, \lambda_n)$.

For any  function $v:H\mapsto \R$ we identify $v\circ P_n$   with the function $v_n:\R^n\mapsto \R$, $v_n(\xi): = v(\sum_{k=1}^n \xi_k e_k)$. 
In particular, we identify $U\circ P_n:H\mapsto \R$   with the function
$U_n: \R^n\mapsto \R$, $U_n(\xi) := U(\sum_{k=1}^n \xi_k e_k)$. $U_n$ is convex and $DU_n$ is Lipschitz continuous, and hence $U_n$ belongs to $W^{2, \infty}(\R^n, d\xi) \subset W^{2, \infty}(\R^n, {\mathcal N}_{\;0, Q_n})$.

For $\lambda >0$  let us consider the problem 
\begin{equation}
\label{e2.50}
\lambda v_n - {\mathcal L}  v_n + \langle DU_{n}, Dv_n \rangle = f_n, 
\end{equation}
where  the Ornstein--Uhlenbeck operator ${\mathcal L}$ in $\R^n$ is defined by 
$${\mathcal L} \varphi (\xi) =  \frac{1}{2} \sum_{k=1}^n ( D_{kk} \varphi(\xi) -  \lambda_k^{-1}\xi_k D_k \varphi(\xi)), \quad \xi \in \R^n.$$
Since $DU_n$ is Lipschitz continuous,   \eqref{e2.50}  has a  unique solution  $v_n \in \bigcup_{\alpha\in (0, 1)}$ $C^{2+\alpha}_b(\R^n)$.  A reference   is \cite[Theorem 1]{LV1}. 
In fact \cite[Theorem 1]{LV1} deals with large 
$\lambda$'s, but a standard application of the maximum principle (e.g., \cite[Lemma 2.4]{LV1}) and of the Schauder estimates of \cite[Theorem
1]{LV1} show that \eqref{e2.50} is uniquely solvable in $C^{2+\theta}_{b}(\R^n)$
for each $\lambda >0$. Moreover, 
an estimate for the first order derivatives of $v_n$, 
\begin{equation}
\label{e2.7n}
\|\,|Dv_n |\,\|_{\infty} \leq \frac{1}{ \lambda }\| \,|Df_n|\,\|_{\infty} ,  
\end{equation}
follows from the well known probabilistic representation formula for $v_n$, 
\begin{equation}
\label{Kolm_v}
v_n (\xi) = \int_0^{\infty}e^{-\lambda t} \E (f (X_n(t,\xi)))dt, \quad \xi \in \R^n, 
\end{equation}
$X_n(t,\xi)$ being the solution  to the stochastic ode in $\R^n$
$$\left\{\begin{array}{l}
dX_n(t,\xi)= -\frac{1}{2} Q_n^{-1} X_n(t,\xi)dt -   DU_n(X_n(t,\xi))dt +  dW_n(t),
\\
\\
X_n(0,\xi)=\xi,
\end{array}\right. $$
where $W_n(t) = P_nW(t)$ is a standard Brownian motion in $\R^n$. 
Indeed,    \eqref{e2.7n} follows taking into account that 
 $$ d(X_n(t,x) - X_n(t,y)) =  -\frac{1}{2} (Q_n^{-1}(X_n(t,x) - X_n(t,y)) dt - (DU_{n}(X_n(t,x)) - DU_{n}(X_n(t,y))dt $$
so that $X_n(\cdot ,x) - X_n(\cdot ,y)$ is almost surely differentiable, and taking the scalar product by $ X_n(t ,x) - X_n(t ,y)$ we get
  $\frac{d}{dt} \| X_n(t ,x) - X_n(t ,y)\|^2 \leq 0$,  by the monotonicity  of $D U_{n}$. This implies $ \|X_n(t ,x) - X_n(t ,y) \|\leq \|x-y\|$ and consequently  $|v_n^{\eps}(x) - v_n^{\eps}(y)| \leq \|f_n\|_{Lip}  \|x-y\|/\lambda$. 

Going  back to infinite dimensions, we set
\begin{equation}
\label{def}
V_n(x) := v_n (x_1, \ldots, x_n),  \quad x\in H.
\end{equation}
Then $V_n \in {\mathcal F}{\mathcal C}^2_b(H)$,   and  
\begin{equation}
\label{Vn}
\lambda V_n  - {\mathcal K}V_n  = f\circ P_n  + \langle DU - D(U\circ P_n), DV_n\rangle,
\end{equation}
where $f\circ P_n = f$ for $n$ large enough, since $f$ is cylindrical. The right-hand  side converges to $f$ as $n\to\infty$ since estimate 
 \eqref{e2.7n} implies
$$|\langle DU(x) - D(U\circ P_n)(x), DV_n^{\eps}(x)\rangle| \leq \frac{1}{\lambda}  \sup_{y\in H} \|Df(y)\| \, \|DU(x) - D(U\circ P_n)(x)\| $$
which goes to $0$ pointwise, since $DU$ is continuous, and in $L^2(H, \nu)$ by the dominated convergence theorem, since 
 $$\|D(U\circ P_n)(x)\| \leq   [DU]_{Lip} \|P_nx\| + \|DU(0)\|  \leq   [DU]_{Lip} \| x\| + \|DU(0)\| , $$
for each $n\in \N$. Therefore, $\lambda V_n  - {\mathcal K}V_n  $ converges to $f$ in $L^2(H, \nu)$, which implies that $(\lambda  I - {\mathcal K} )({\mathcal F}{\mathcal C}^2_b(H))$ is dense in $L^2(H, \nu)$.

This will be used later, in the proof of Proposition \ref{KolmLip}; however, it is not enough for our aims. This is because next formula \eqref{e2.11}, which is the starting point of all our optimal estimates,  is obtained differentiating  $\lambda u - {\mathcal K}u  $ for a cylindrical $u$, and  we need that $u$ has third order derivatives. So, we shall  approximate using  ${\mathcal F}{\mathcal C}^3_b$ functions instead of only ${\mathcal F}{\mathcal C}^2_b$ functions.

To be able to use regularity  theorems for elliptic equations in $\R^n$  that yield $C^3$ solutions, we need  regular coefficients, so we approach $U_n$ in a standard way by convolution with smooth mollifiers. Precisely, we fix once and for all a function $\theta\in C^{\infty}_{c}(\R^n)$ with support contained in the ball $B(0,1)$ of center $0$ and radius $1$, such that $\int_{\R^n}\theta(\xi)d\xi =1$, and for $\eps >0$ we set 
$$U^{\eps}_{n}(\xi) = \int_{\R^n} U_n(\xi -\eps y)\theta(y)dy,  \quad \xi\in \R^n.$$
Then $U^{\eps}_{n}$ is smooth and convex, and $DU^{\eps}_{n}$ is  Lipschitz continuous. Moreover, 
\begin{equation}
\label{e2.8}
\begin{array}{l}
\ds  |DU_n(\xi) - D{\mathcal U}^{\eps}_n(\xi)| = \bigg| \int_{\R^n} (DU_n(\xi) - DU_n(\xi-\eps y)) \theta(y)dy\bigg|
\\
\\
\ds \leq \eps [DU_n]_{Lip} \int_{\R^n}|y| \theta(y)dy \leq   \eps [DU_n]_{Lip} \leq \eps [DU]_{Lip}, \quad \xi\in \R^n.  
\end{array}  
\end{equation}

For $\lambda >0$ and $\eps >0$ let us consider the problem
\begin{equation}
\label{e2.5}
\lambda v_n^{\eps} - {\mathcal L}  v_n^{\eps} + \langle DU^{\eps}_{n}, Dv_n^{\eps}\rangle = f_n. 
\end{equation}
As before, since  $DU^{\eps}_{n}$ are Lipschitz continuous,   \eqref{e2.5} has a  unique solution $v_n^{\eps}\in \bigcup_{\alpha\in (0, 1)}$ $C^{2+\alpha}_b(\R^n)$, again by  \cite[Theorem 1]{LV1}. 
The functions $v_n^{\eps}$ are represented by
\begin{equation}
\label{Kolm_vn}
v_n^{\eps}(x) = \int_0^{\infty}e^{-\lambda t} \E (f_n(X^{\eps}(t,x)))dt,
\end{equation}
where  $X^{\eps}(t,x)$ is  the solution  to the stochastic ode 
$$\left\{\begin{array}{l}
dX^{\eps} (t,x)= -\frac{1}{2} Q_n^{-1} X^{\eps} (t,x)dt -   DU^{\eps}_{n}(X^{\eps} (t,x))dt +  dW_n(t),
\\
\\
X^{\eps} (0,x)=x,
\end{array}\right. $$
and $W_n(t)$ is a  standard Brownian motion in $\R^n$.
The representation formula \eqref{Kolm_vn} yields the sup norm estimates 
\begin{equation}
\label{e2.6}
\|v_n^{\eps}\|_{\infty} \leq \frac{1}{ \lambda }\| f_n\|_{\infty} , 
\end{equation}
\begin{equation}
\label{e2.7}
\|\,|Dv_n^{\eps}|\,\|_{\infty} \leq \frac{1}{ \lambda }\| \,|Df_n|\,\|_{\infty} . 
\end{equation}
  \eqref{e2.6} is immediate, while   \eqref{e2.7} follows arguing as in the proof of  \eqref{e2.7n}, since $D U^{\eps}_{n}$ is monotonic as well. 

We want to show that $v_n^{\eps}\in  C^{3}_b(\R^n)$. Since $DU^{\eps}_{n}$ is smooth, then $v_n^{\eps}$ belongs to $C^{\infty}(\R^n)$ by local elliptic regularity, and we need only to prove that its third order derivatives are bounded.  To this end we differentiate both sides of \eqref{e2.5} with respect to $x_i$, getting 
$$\lambda D_iv_n^{\eps} - {\mathcal L} D_iv_n^{\eps} + \frac{1}{\lambda_i} D_iv_n^{\eps} + \langle DU^{\eps}_{n}, D(D_iv_n^{\eps})\rangle = D_i f_n - \langle D(D_iU^{\eps}_{n}), Dv_n^{\eps}\rangle . $$
The right-hand  side is H\"older continuous and bounded. Applying once again the Schauder Theorem 
\cite[Theorem1]{LV1} we obtain $D_iv_n^{\eps}\in C^{2+\alpha}_b(\R^n)$ for each $\alpha \in (0,1)$. In particular, 
$v_n^{\eps}\in C^3_b(\R^n)$.

Let us go back to infinite dimensions and set 
\begin{equation}
\label{def-eps}
 V_n^{\eps}(x) := v_n^{\eps}(x_1, \ldots, x_n), \;\;  {\mathcal U}^{\eps}_n(x) =  U^{\eps}_{n}(x_1, \ldots, x_n), \quad x\in H.
\end{equation}
Then  $V_n^{\eps}\in {\mathcal F}{\mathcal C}^3_b(H)$ and  
\begin{equation}
\label{Vneps}
\lambda V_n^{\eps} - {\mathcal K}V_n^{\eps} = 
f\circ P_n + \langle DU - D{\mathcal U}^{\eps}_n, DV_n\rangle  .
\end{equation}
Concerning the right-hand  side, taking into account \eqref{e2.7} 
and \eqref{e2.8}, we get 
$$\begin{array}{l}
|\langle DU(x) - D{\mathcal U}^{\eps}_n(x), DV_n^{\eps}(x)\rangle|
\\
\\
\leq \frac{1}{\lambda}  \sup_{y\in H} \|Df(y)\| ( \|DU(x) - D(U\circ P_n)(x)\| +  \| D(U\circ P_n)(x)-  D{\mathcal U}^{\eps}_n(x)\|
\\
\\
\leq  \frac{1}{\lambda}  \sup_{y\in H} \|Df(y)\| ( \|DU(x) - D(U\circ P_n)(x)\| + \eps [DU]_{Lip(X)})
\end{array}$$
so that 
$$\begin{array}{l}
\ds \| \langle DU - D{\mathcal U}^{\eps}_n, DV_n^{\eps}\rangle \|_{L^2(H, \nu)}^2 \leq \bigg(\frac{1}{\lambda}  \sup_{y\in H} \|Df(y)\| \bigg)^2 \cdot 
\\
\\
\ds \cdot  2\bigg(\int_H  \|DU - D(U\circ P_n)\| ^2d\nu  + (\eps [DU]_{Lip(X)})^2 \bigg), \end{array}$$
where the first integral  $\int_H\|DU - D(U\circ P_n)\|^2d\nu$ vanishes as $n\to \infty$, as we already remarked. 
Therefore, $\| \langle DU - D{\mathcal U}^{\eps}_n, DV_n^{\eps}\rangle \|_{L^2(H, \nu)}$ is as small as we wish provided we take $n$ large and $\eps$ small, and the same holds for $\lambda V_n^{\eps} - {\mathcal K}V_n^{\eps} - f$.

Summarizing, we have proved the following proposition.

\begin{Proposition}
\label{p2.3}
The closure $\overline{\mathcal K}$ of the operator ${\mathcal K}: {\mathcal F}{\mathcal C}^3_b(H)\mapsto L^2(H, \nu)$ is $m$-dissipative, so that it generates a strongly continuous contraction semigroup in $L^2(H, \nu)$. 
In particular, for every $\lambda >0$ and $f\in L^2(H, \nu)$ problem \eqref{e1.1} has a unique strong solution $u$, that is: there is a sequence $(u_n)\subset  {\mathcal F}{\mathcal C}^3_b(H)$ such that 
$u_n \to u$ and $\lambda u_n - {\mathcal K}u_n\to f$ in $L^2(H, \nu)$. 
\end{Proposition}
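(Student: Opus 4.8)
The plan is to read off the $m$-dissipativity of $\overline{\mathcal K}$ from the Lumer--Phillips theorem and then to recover the strong solution by unwinding the definition of the closure. Three ingredients are required, two of which are already in hand. First, the domain ${\mathcal F}{\mathcal C}^3_b(H)$ is dense in $L^2(H,\nu)$, since ${\mathcal F}{\mathcal C}^\infty_b(H)\subset {\mathcal F}{\mathcal C}^3_b(H)$ and ${\mathcal F}{\mathcal C}^\infty_b(H)$ is dense by Lemma \ref{l1.2}. Second, ${\mathcal K}$ is dissipative on ${\mathcal F}{\mathcal C}^3_b(H)$: taking $\varphi=u$ in \eqref{e2.3} gives $\int_H({\mathcal K}u)\,u\,d\nu=-\tfrac12\int_H\|Du\|^2\,d\nu\le 0$, whence $\|(\lambda I-{\mathcal K})u\|_{L^2(H,\nu)}\ge\lambda\|u\|_{L^2(H,\nu)}$ for every $\lambda>0$. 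Third, the range $(\lambda I-{\mathcal K})({\mathcal F}{\mathcal C}^3_b(H))$ is dense in $L^2(H,\nu)$ for every $\lambda>0$: this is exactly what the finite-dimensional approximation of \S\ref{core} yields, since the functions $V_n^\eps$ constructed there belong to ${\mathcal F}{\mathcal C}^3_b(H)$ and $\lambda V_n^\eps-{\mathcal K}V_n^\eps$ approximates an arbitrary $f\in{\mathcal F}{\mathcal C}^\infty_b(H)$ in $L^2(H,\nu)$.

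Next I would upgrade these facts to $m$-dissipativity of the closure. Working in the Hilbert space $L^2(H,\nu)$, a densely defined dissipative operator is automatically closable: if $u_n\in{\mathcal F}{\mathcal C}^3_b(H)$ satisfy $u_n\to 0$ and ${\mathcal K}u_n\to z$, then applying dissipativity to $u_n+t\varphi$ with $\varphi\in{\mathcal F}{\mathcal C}^3_b(H)$ and $t\in\R$, expanding $\int_H{\mathcal K}(u_n+t\varphi)(u_n+t\varphi)\,d\nu\le 0$, passing to the limit $n\to\infty$, and then letting $t\to 0^\pm$ forces $\int_H z\,\varphi\,d\nu=0$ for all such $\varphi$, hence $z=0$ by density. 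The closure $\overline{\mathcal K}$ inherits dissipativity, so $\|(\lambda I-\overline{\mathcal K})u\|\ge\lambda\|u\|$ on $D(\overline{\mathcal K})$; since $\overline{\mathcal K}$ is closed, this estimate makes the range of $\lambda I-\overline{\mathcal K}$ closed (a $(\lambda I-\overline{\mathcal K})$-image Cauchy sequence pulls back to a Cauchy sequence in the domain). Combined with the already established density of $(\lambda I-{\mathcal K})({\mathcal F}{\mathcal C}^3_b(H))$, this gives $(\lambda I-\overline{\mathcal K})(D(\overline{\mathcal K}))=L^2(H,\nu)$ for every $\lambda>0$; that is, $\overline{\mathcal K}$ is $m$-dissipative.

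By the Lumer--Phillips (Hille--Yosida) theorem, $\overline{\mathcal K}$ then generates a strongly continuous contraction semigroup on $L^2(H,\nu)$, and ${\mathcal F}{\mathcal C}^3_b(H)$ is a core. To finish, given $\lambda>0$ and $f\in L^2(H,\nu)$ I would set $u:=(\lambda I-\overline{\mathcal K})^{-1}f\in D(\overline{\mathcal K})$; because ${\mathcal F}{\mathcal C}^3_b(H)$ is a core there is a sequence $(u_n)\subset{\mathcal F}{\mathcal C}^3_b(H)$ with $u_n\to u$ and ${\mathcal K}u_n\to\overline{\mathcal K}u$ in $L^2(H,\nu)$, so that $\lambda u_n-{\mathcal K}u_n\to\lambda u-\overline{\mathcal K}u=f$, which is precisely the assertion. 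Uniqueness is immediate from the dissipativity estimate $\|u\|\le\lambda^{-1}\|f\|$.

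The genuinely difficult part of this result has already been carried out in \S\ref{core}: the construction of ${\mathcal F}{\mathcal C}^3_b(H)$ approximants with controlled $\lambda V_n^\eps-{\mathcal K}V_n^\eps$, resting on the $C^{2+\alpha}$ Schauder theory in $\R^n$, the probabilistic gradient bound \eqref{e2.7}, and the double approximation of $U$ by projections and mollifications. Relative to that, the remaining content of the proposition is the abstract packaging above, and its only delicate point is the passage from \emph{dense} range to \emph{full} range, which is what converts dissipativity plus density into generation of a semigroup and which relies on the closed-range property afforded by the dissipativity estimate.
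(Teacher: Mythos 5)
Your proposal is correct and takes essentially the same approach as the paper: dissipativity of $\mathcal K$ on ${\mathcal F}{\mathcal C}^3_b(H)$ via \eqref{e2.3}, dense range of $\lambda I-{\mathcal K}$ from the construction in \S\ref{core}, and the standard functional-analytic upgrade (closability of a densely defined dissipative operator, closedness of the range, Lumer--Phillips) to $m$-dissipativity of $\overline{\mathcal K}$ and existence/uniqueness of the strong solution. The only difference is that you write out in full the abstract steps the paper leaves implicit when it states that dissipativity plus dense range ``implies that the closure $\overline{\mathcal K}$ generates a contraction semigroup'' and then summarizes the two preceding subsections as the proof.
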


\subsubsection{$W^{2,2}(H, \nu)$ regularity of the strong solution and other estimates.}
\label{stimeottimali}

To prove our estimates it is sufficient to consider functions  $u\in {\mathcal F}{\mathcal C}^3_b(H)$, which  is dense in the domain of $\overline{\mathcal K}$. So, we fix $u\in {\mathcal F}{\mathcal C}^3_b(H)$, $\lambda >0$,  and we set
$$\lambda u - {\mathcal K}u = f.$$
Estimates on $u$ and on $Du$ in terms of $f$ are elementary. They are obtained multiplying both sides by $u$ and taking into account \eqref{e2.1}. 

\begin{Lemma}
\label{l2.4}
We have 
$$\int_H (\lambda u^2 +\frac{1}{2}\|Du\|^2)d\nu = \int_H uf\,d\nu,$$
 and therefore
 \begin{equation}
\label{e2.9}
\int_H u^2d\nu \le \frac1{\lambda^2}\;\int_H f^2d\nu 
\end{equation}
and
\begin{equation}
\label{e2.10}
\int_H\|Du\|^2d\nu \le \frac{2}{\lambda}\;\int_H f^2d\nu .
\end{equation}
\end{Lemma}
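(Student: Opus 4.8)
The plan is to test the equation $\lambda u - {\mathcal K}u = f$ against $u$ itself and exploit the symmetrizing identity \eqref{e2.1}. Since $u\in {\mathcal F}{\mathcal C}^3_b(H)\subset {\mathcal F}{\mathcal C}^2_b(H)$, I may apply \eqref{e2.1} with $\varphi=\psi=u$, which gives
$$\int_H {\mathcal K}u\,u\,d\nu = -\frac12\int_H \|Du\|^2\,d\nu.$$
First I would multiply both sides of $\lambda u - {\mathcal K}u = f$ by $u$ and integrate with respect to $\nu$; all integrals are finite because $u$ is cylindrical and bounded together with its first derivatives, and $f\in L^2(H,\nu)$. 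Substituting the displayed identity for $\int_H {\mathcal K}u\,u\,d\nu$ produces the energy equality
$$\int_H\Big(\lambda u^2+\tfrac12\|Du\|^2\Big)\,d\nu=\int_H uf\,d\nu,$$
which is the first assertion.

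The two inequalities are then obtained by discarding nonnegative terms and applying Cauchy--Schwarz. For \eqref{e2.9} I would drop $\tfrac12\|Du\|^2\ge 0$ to get $\lambda\int_H u^2\,d\nu\le \int_H uf\,d\nu\le \big(\int_H u^2\,d\nu\big)^{1/2}\big(\int_H f^2\,d\nu\big)^{1/2}$, and then divide by $\big(\int_H u^2\,d\nu\big)^{1/2}$ and square. For \eqref{e2.10} I would instead drop $\lambda u^2\ge 0$ to get $\tfrac12\int_H\|Du\|^2\,d\nu\le \int_H uf\,d\nu$, bound the right-hand side by Cauchy--Schwarz and then by \eqref{e2.9}, yielding $\tfrac12\int_H\|Du\|^2\,d\nu\le \lambda^{-1}\int_H f^2\,d\nu$, i.e. the claimed bound with constant $2/\lambda$.

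There is essentially no obstacle here: the entire argument rests on the symmetrizing property \eqref{e2.1}, which is already available for cylindrical functions, together with elementary Hilbert-space estimates. The only point requiring a word of care is the integrability needed to justify multiplying by $u$ and integrating, but this is immediate since $u$ and $Du$ are bounded for $u\in {\mathcal F}{\mathcal C}^3_b(H)$ and $\nu$ is a probability measure.
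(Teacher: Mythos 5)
Your proof is correct and follows essentially the same route as the paper: the authors also obtain the energy identity by multiplying $\lambda u-\mathcal{K}u=f$ by $u$, integrating against $\nu$, and invoking the symmetrizing formula \eqref{e2.1}, after which \eqref{e2.9} and \eqref{e2.10} follow exactly as you describe via Cauchy--Schwarz and dropping nonnegative terms. There is nothing missing; your write-up merely spells out the details the paper leaves implicit.
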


Estimates on the second order derivatives are  less obvious. They are a consequence of the following proposition. 

\begin{Proposition}
\label{p2.5}
For each $u\in {\mathcal F}{\mathcal C}^3_b(H)$ we have
 \begin{equation}
 \begin{array}{l}
\label{e2.11}
\ds\lambda\int_H \|Du\|^2d\nu + \frac{1}{2} \int_H \Tr\;[(D^2u)^2]d\nu + \frac{1}{2} \int_H\|Q^{-1/2}Du\|^2d\nu 
\\
\\
\ds +\int_H \langle D^2UDu, Du \rangle d\nu 
= \int_H \langle Du,  Df \rangle d\nu  = 2 \int_H (\lambda u - f)f \,d\nu. 
\end{array} 
\end{equation}
\end{Proposition}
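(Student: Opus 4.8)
The plan is to establish \eqref{e2.11} as a Bochner--Bakry-\'Emery type identity, obtained by differentiating the equation $\lambda u - \mathcal Ku = f$ in each coordinate direction, testing against the corresponding derivative of $u$, and summing over the directions. Since $u\in{\mathcal F}{\mathcal C}^3_b(H)$ is cylindrical and smooth, all the series that appear are finite sums and every manipulation reduces to a finite-dimensional computation; the only genuinely infinite-dimensional ingredients are the integration-by-parts formula \eqref{e1.9} and its consequence, the symmetrizing identity \eqref{e2.1}.

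First I would compute the commutator between $D_i$ and $\mathcal K$. Writing $\mathcal K$ in the coordinate form $\mathcal Ku = \frac12\sum_k D_{kk}u - \frac12\sum_k\lambda_k^{-1}x_k D_ku - \sum_k D_kU\,D_ku$ and differentiating in the direction $e_i$ gives, $\nu$-a.e.,
$$D_i(\mathcal K u) = \mathcal K(D_i u) - \frac{1}{2\lambda_i}\,D_i u - \sum_{k} D_{ik}U\,D_k u .$$
The extra term $-\frac{1}{2\lambda_i}D_iu$ is produced by the Ornstein--Uhlenbeck drift and will generate the $\|Q^{-1/2}Du\|^2$ contribution, while $\sum_k D_{ik}U\,D_ku$ comes from the gradient of $U$ and will generate the Hessian term $\langle D^2U\,Du,Du\rangle$. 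Applying $D_i$ to $\lambda u - \mathcal Ku = f$ therefore yields
$$\lambda D_i u - \mathcal K(D_i u) + \frac{1}{2\lambda_i}\,D_iu + \sum_k D_{ik}U\,D_ku = D_i f .$$

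Next I would multiply this identity by $D_iu$, integrate against $\nu$, and sum over $i$. Three of the four terms on the left are immediate: $\sum_i\int_H \lambda (D_iu)^2\,d\nu = \lambda\int_H\|Du\|^2\,d\nu$, $\sum_i\int_H\frac{1}{2\lambda_i}(D_iu)^2\,d\nu = \frac12\int_H\|Q^{-1/2}Du\|^2\,d\nu$, and the double sum $\sum_{i,k} D_{ik}U\,D_ku\,D_iu$ integrates to $\int_H\langle D^2U\,Du,Du\rangle\,d\nu$, while the right-hand side sums to $\int_H\langle Du,Df\rangle\,d\nu$. The key term is $-\sum_i\int_H\mathcal K(D_iu)\,D_iu\,d\nu$: since $D_iu\in{\mathcal F}{\mathcal C}^2_b(H)$, I can apply \eqref{e2.1} with $\varphi=\psi=D_iu$ to obtain $-\int_H\mathcal K(D_iu)\,D_iu\,d\nu = \frac12\int_H\|D(D_iu)\|^2\,d\nu = \frac12\int_H\sum_k (D_{ik}u)^2\,d\nu$, and summing over $i$ gives $\frac12\int_H\Tr[(D^2u)^2]\,d\nu$. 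Collecting the four terms yields the first equality in \eqref{e2.11}. For the second equality I would apply \eqref{e2.1} (extended from ${\mathcal F}{\mathcal C}^1_b(H)$ to $W^{1,2}(H,\nu)$ by density, as in Remark \ref{r1.5}) with $\varphi=u$ and $\psi=f=\lambda u-\mathcal Ku\in W^{1,2}(H,\nu)$, obtaining $\frac12\int_H\langle Du,Df\rangle\,d\nu = -\int_H\mathcal Ku\,f\,d\nu = -\int_H(\lambda u-f)f\,d\nu$, that is, $\int_H\langle Du,Df\rangle\,d\nu = 2\int_H (f-\lambda u)f\,d\nu$.

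The main obstacle is rigour in the term $\sum_k D_{ik}U\,D_ku$ when $U$ is only convex with Lipschitz gradient, so that $D^2U$ exists merely $\nu$-a.e.\ (in the Gateaux sense, by \cite[Theorem 6]{Ph}); both the pointwise differentiation of $\mathcal Ku$ above and the passage of $\int_H\langle D^2U\,Du,Du\rangle\,d\nu$ through limits need justification. I would handle this by approximating $U$ by smooth convex functions $U^{\eps}$ with $[DU^{\eps}]_{Lip}\le[DU]_{Lip}$ (the mollifications already used in \S\ref{core}), proving the identity for each $U^{\eps}$, where every step is classical, and then letting $\eps\to0$. To make the limit clean I would keep the right-hand side in the form $2\int_H(f-\lambda u)f\,d\nu$, which depends on $U^{\eps}$ only through $f^{\eps}=\lambda u-\mathcal K^{\eps} u\to f$ in $L^2(H,\nu)$; on the left the first three terms are independent of $\eps$, and $\int_H\langle D^2U^{\eps}\,Du,Du\rangle\,d\nu\to\int_H\langle D^2U\,Du,Du\rangle\,d\nu$ by dominated convergence, using the uniform bound $|\langle D^2U^{\eps}\,Du,Du\rangle|\le[DU]_{Lip}\|Du\|^2\in L^1(H,\nu)$ together with the $\nu$-a.e.\ convergence of the Hessians on the finite-dimensional range relevant to the cylindrical $Du$.
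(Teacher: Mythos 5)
Your first two paragraphs are precisely the paper's own proof of Proposition \ref{p2.5}: the paper differentiates $\lambda u-{\mathcal K}u=f$ with respect to $x_i$, multiplies by $D_iu$, sums over $i$ (all series are finite sums for cylindrical $u$), integrates against $\nu$, and disposes of $-\sum_i\int_H({\mathcal K}D_iu)\,D_iu\,d\nu$ via \eqref{e2.1} with $\varphi=\psi=D_iu$ --- exactly your commutator computation, including the origin of the $\|Q^{-1/2}Du\|^2$ and $\langle D^2U\,Du,Du\rangle$ terms. On the second equality you are actually more careful than the printed statement: applying \eqref{e2.1} with $\varphi=u$, $\psi=f$ gives $\int_H\langle Du,Df\rangle\,d\nu=-2\int_H{\mathcal K}u\,f\,d\nu=2\int_H(f-\lambda u)f\,d\nu$, which is what you derived; the display \eqref{e2.11} carries the opposite sign $2\int_H(\lambda u-f)f\,d\nu$, which must be a slip, since by \eqref{e2.9} and Cauchy--Schwarz one has $2\int_H(\lambda u-f)f\,d\nu\le 0$ while the left-hand side of \eqref{e2.11} is a sum of nonnegative terms. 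The bound by $4\int_Hf^2\,d\nu$ used in Proposition \ref{p2.6} remains valid for the corrected sign.

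The genuine gap is in your third paragraph, i.e.\ exactly where you go beyond the paper. If you replace $U$ by a mollified $U^{\eps}$ but keep the measure $\nu$, it is not true that ``every step is classical'': the symmetrizing identity \eqref{e2.1}, which is the engine of the whole computation, holds for the operator with drift $-DU^{\eps}$ only with respect to its own measure $\nu_{\eps}=Z_{\eps}^{-1}e^{-2U^{\eps}}\mu$, not with respect to $\nu$; tested against $\nu$, the integrations by parts of $\sum_i{\mathcal K}^{\eps}(D_iu)\,D_iu$ and of ${\mathcal K}^{\eps}u\,f^{\eps}$ produce extra drift terms involving $\langle DU-DU^{\eps},\,\cdot\,\rangle$ that your scheme does not account for. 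If instead you state the $\eps$-identity with respect to $\nu_{\eps}$, then your claim that ``the first three terms are independent of $\eps$'' fails, because the measure itself moves with $\eps$, and you need the $e^{-2U^{\eps}}\to e^{-2U}$ bookkeeping that the paper carries out in the proof of Theorem \ref{t2.7}. Both defects are repairable: the drift errors vanish as $n\to\infty$, $\eps\to0$ by \eqref{e2.8} together with $\int_H\|DU-D(U\circ P_n)\|^2\,d\nu\to0$, or one passes to the limit in the $\nu_{\eps}$-identities by dominated convergence using $e^{-2U^{\eps}}\le e^{-2\inf U}$, Moreau--Yosida style as in Theorem \ref{t2.7}; but as written, the intermediate identity you send to the limit is not one you can prove. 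Note, finally, that the paper avoids this layer altogether: it gives $D_{hk}U$ a $\nu$-a.e.\ meaning as Gateaux derivatives via Phelps' theorem (the paragraph preceding the definition of strong solution) and differentiates directly; your instinct that the identification of the a.e.\ Gateaux derivatives of the Lipschitz maps $D_kU$ with the derivatives entering \eqref{e1.9} deserves justification is sound, but the fix must be run with the correct measures.
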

\begin{proof}
As in Section \ref{core}, we differentiate the equality $\lambda u - {\mathcal K}u = f$ with respect to $x_i$, then we multiply by $D_iu$ and sum up. We obtain 
$$\lambda  \|Du\|^2 - \sum_{i=1}^{\infty} ({\mathcal K}D_iu )D_iu + \sum_{i=1}^{\infty}\frac{(D_iu)^2}{2 \lambda_i}+ \sum_{i, j=1}^{\infty} D_{ij}UD_iuD_ju = \langle Df, Du\rangle , $$
where the series are in fact finite sums. Integrating on $H$ and taking \eqref{p2.1} into account, \eqref{e2.11} follows.  
\end{proof}

As a corollary of Lemma \ref{l2.4} and Proposition \ref{p2.5} we obtain estimates on the strong solution to \eqref{e1.1}. 

\begin{Proposition}
\label{p2.6}
Let $\lambda >0$, $f\in L^2(H, \nu)$ and  let $u$ be the strong solution to \eqref{e1.1}. 
Then $u\in W^{2,2}(H, \nu)\cap W^{1,2}_{-1/2}(H, \nu)$, and 
 \begin{equation}
\label{e2.12}
  \lambda\int_H \|Du\|^2d\nu + \frac{1}{2} \int_H \Tr\;[(D^2u)^2]d\nu + \frac{1}{2} \int_H\|Q^{-1/2}Du\|^2d\nu 
  +\int_H \langle D^2UDu, Du \rangle d\nu 
\leq 4 \int_H f^2 \,d\nu. 
\end{equation}
In addition, if $f\in {\mathcal F}{\mathcal C}^{\infty}_b(H)$, then $u$ is $\nu$--essentially bounded, and we have
\begin{equation}
\label{e2.13}
\ds\mbox{\rm ess} \sup_{x\in H} |u(x)| \leq \frac{1}{\lambda} \sup_{x\in H} |f(x)| .
\end{equation}
\end{Proposition}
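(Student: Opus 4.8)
The plan is to prove the estimate \eqref{e2.12} first for cylindrical $u\in {\mathcal F}{\mathcal C}^3_b(H)$, where every term is classically defined and finite, and then to transfer both the membership $u\in W^{2,2}(H,\nu)\cap W^{1,2}_{-1/2}(H,\nu)$ and the inequality to the strong solution by an approximation and weak-compactness argument. Throughout I would use that the four integrals on the left-hand side of \eqref{e2.12} are nonnegative: the first three are manifestly so, and the last one because $U$ is convex, so that $D^2U\geq 0$ and $\langle D^2U\,Du,Du\rangle\geq 0$.

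\emph{Step 1 (cylindrical case).} Fix $u\in {\mathcal F}{\mathcal C}^3_b(H)$ and set $f=\lambda u-{\mathcal K}u$. By Proposition \ref{p2.5} the left-hand side of \eqref{e2.12} equals the right-hand side of \eqref{e2.11}, so it suffices to bound the latter by $4\int_H f^2\,d\nu$. Using the symmetry identity \eqref{e2.1} with $\varphi=u$, $\psi=f$, the right-hand side of \eqref{e2.11} can be rewritten as $2\int_H f^2\,d\nu-2\lambda\int_H uf\,d\nu$. By Cauchy--Schwarz together with the a priori bound \eqref{e2.9} of Lemma \ref{l2.4},
$$\lambda\Big|\int_H uf\,d\nu\Big|\leq \lambda\Big(\int_H u^2\,d\nu\Big)^{1/2}\Big(\int_H f^2\,d\nu\Big)^{1/2}\leq \int_H f^2\,d\nu,$$
whence the right-hand side of \eqref{e2.11} is at most $4\int_H f^2\,d\nu$. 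This proves \eqref{e2.12} for $u\in {\mathcal F}{\mathcal C}^3_b(H)$.

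\emph{Step 2 (passage to the strong solution).} Let $u$ be the strong solution to \eqref{e1.1} and, using Proposition \ref{p2.3}, pick $(u_n)\subset {\mathcal F}{\mathcal C}^3_b(H)$ with $u_n\to u$ and $f_n:=\lambda u_n-{\mathcal K}u_n\to f$ in $L^2(H,\nu)$. Applying Step 1 to each $u_n$ and using that $(f_n)$ is bounded in $L^2(H,\nu)$, together with the nonnegativity of all four terms, I obtain that $(Du_n)$, $(D^2u_n)$ and $(Q^{-1/2}Du_n)$ are bounded in $L^2(H,\nu;H)$, $L^2(H,\nu;{\mathcal L}_2(H))$ and $L^2(H,\nu;H)$ respectively. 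Passing to a subsequence, these converge weakly; since the operators $D$, $D^2$ and $Q^{-1/2}D$ are closed (Lemma \ref{l1.3} and Definition \ref{d1.4}) their graphs are weakly closed, so the weak limits are necessarily $Du$, $D^2u$ and $Q^{-1/2}Du$, and therefore $u\in W^{2,2}(H,\nu)\cap W^{1,2}_{-1/2}(H,\nu)$. Finally, each quadratic form on the left-hand side of \eqref{e2.12} is convex and nonnegative, hence weakly lower semicontinuous, so passing to the $\liminf$ in the inequality for $u_n$ (and using $\int_H f_n^2\,d\nu\to\int_H f^2\,d\nu$) yields \eqref{e2.12} for $u$.

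\emph{Step 3 ($L^\infty$ bound) and the main obstacle.} For $f\in {\mathcal F}{\mathcal C}^\infty_b(H)$ I would return to the explicit approximants $V_n^\eps$ built in the proof of Proposition \ref{p2.3}, which satisfy the sup-norm estimate coming from \eqref{e2.6}, namely $\|V_n^\eps\|_\infty\leq \frac{1}{\lambda} \sup_{x\in H}|f(x)|$ (here $\|f_n\|_\infty=\sup_H|f|$ because $f$ is cylindrical). Since $(\lambda-\overline{\mathcal K})V_n^\eps\to f$ in $L^2(H,\nu)$ and the resolvent of $\overline{\mathcal K}$ is continuous, $V_n^\eps\to u$ in $L^2(H,\nu)$, hence $\nu$-a.e. along a subsequence; the uniform bound then passes to the a.e. limit and gives \eqref{e2.13}. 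I expect the delicate point of the whole argument to be Step 2: identifying the weak limits of the derivatives with the derivatives of $u$ (which relies crucially on the closedness established in Lemma \ref{l1.3}) and ensuring the weak lower semicontinuity of the term $\int_H\langle D^2U\,Du,Du\rangle\,d\nu$; the latter is harmless here only because $DU$ is Lipschitz, so that $D^2U$ is bounded and nonnegative and the form is a genuine convex, lower semicontinuous functional on $L^2(H,\nu;H)$.
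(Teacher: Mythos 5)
Your proposal is correct, and its skeleton (cylindrical estimate via Proposition \ref{p2.5} and Lemma \ref{l2.4}, then limit passage along the approximating sequence from Proposition \ref{p2.3}, then the sup-norm bound via the approximants $V_n^{\eps}$ and \eqref{e2.6} with a.e.\ convergence of a subsequence) is the paper's; the one genuine divergence is the limit passage in your Step 2. The paper exploits linearity: applying the Step 1 estimate to the differences $u_j-u_k$, with data $f_j-f_k$, shows that $(u_j)$ is a Cauchy sequence in $W^{2,2}(H,\nu)$ and in $W^{1,2}_{-1/2}(H,\nu)$, so $u$ lies in these spaces and \eqref{e2.12} follows by passing to the limit, with no compactness argument at all; moreover the resulting \emph{strong} convergence is reused later (Section \ref{Weak = strong} invokes $u_n\to u$ in $W^{1,2}(H,\nu)$, which your weak-compactness route does not directly provide). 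Your alternative --- bounded sequences of derivatives, weak limits identified through weak closedness of the graphs of the closed operators of Lemma \ref{l1.3}, and weak lower semicontinuity of the four nonnegative convex forms --- is valid: the graphs are closed linear subspaces, hence weakly closed (note that it is the pair $(D,D^2)$, not $D^2$ alone, that is closable, but since you have joint weak convergence of $(u_n,Du_n,D^2u_n)$ with $u_n\to u$ strongly, the graph argument applies verbatim), and the form $v\mapsto\int_H\langle D^2U\,v,v\rangle\,d\nu$ is indeed convex and strongly continuous on $L^2(H,\nu;H)$ because $D^2U$ is a.e.\ defined (Phelps), nonnegative and bounded by $[DU]_{Lip}$, hence weakly l.s.c.\ as you say. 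Your route is more robust (it would survive situations where one cannot subtract two solutions), at the cost of this closedness/l.s.c.\ bookkeeping; the paper's route is shorter and yields the stronger convergence. One immaterial discrepancy: you rewrite the right-hand side of \eqref{e2.11} as $2\int_H f^2\,d\nu-2\lambda\int_H uf\,d\nu$ while the paper displays $2\int_H(\lambda u-f)f\,d\nu$; with either sign, Cauchy--Schwarz together with \eqref{e2.9} gives the same bound $4\int_H f^2\,d\nu$, so nothing is affected.
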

\begin{proof} Let $u_j \in  {\mathcal F}{\mathcal C}^3_b(H)$ approach $u$ in $D(\overline{{\mathcal K}})$. By estimate \eqref{e2.10}, $Du_j \to Du$ in $L^2(H,\nu;H)$. By Proposition  \ref{p2.5}, equality 
 \eqref{e2.11} holds, with $u_j$ replacing $u$, and  $f_j := \lambda u_j - {\mathcal K}u_j$ replacing $ f$. Then, 
$$ \begin{array}{l}
\ds \lambda\int_H \|Du_j\|^2d\nu + \frac{1}{2} \int_H \Tr\;[(D^2u_j)^2]d\nu + \frac{1}{2} \int_H\|Q^{-1/2}Du_j\|^2d\nu 
\\
\\
\ds +\int_H \langle D^2UDu_j, Du_j \rangle d\nu 
\leq 2 \int_H (\lambda u_j - f_j)f_j \,d\nu \leq 4\|f_j\|^2_{L^2(H, \nu)}, 
\end{array} $$
while by \eqref{e2.9} we have $\lambda \|u_j\|_{L^2(H, \nu)}\leq  \|f_j\|_{L^2(H, \nu)}$. 
Since $f_j\to f$ in $L^2(H, \nu)$ as $j\to \infty$,   $(u_j)$ is a Cauchy sequence in $W^{2,2}(H, \nu)$ and in  $W^{1,2}_{-1/2}(H, \nu)$. 
So, $u$ belongs to such spaces, and letting $j\to \infty$ estimate \eqref{e2.12} follows. 

To prove the last statement,  for $f\in {\mathcal F}{\mathcal C}^{\infty}_b(H)$
we approach $u$ by the functions used in the proof of Proposition \ref{p2.3}. 
Then
\eqref{e2.13}  follows from \eqref{e2.6}, taking into account that for a suitable sequence $(j_k)$, $(u_{j_k})$ converges to $u$, $\nu$-a.e. 
\end{proof}

\subsubsection{Weak = strong.}
\label{Weak = strong}

For $\lambda >0$ and $f\in L^2(H, \nu)$ let $u$ be the strong solution to \eqref{e1.1} given by Proposition \ref{p2.3}. Let $u_n\in  {\mathcal F}{\mathcal C}^3_b(H)$ be such that 
$u_n \to u$ and $f_n:= \lambda u_n - {\mathcal K}u_n \to f$ in $L^2(H, \nu)$. As we remarked in the proof of Proposition \ref{p2.6}, $u_n \to u$ in $W^{1,2}(H, \nu)$. 
 
Fix $\varphi \in {\mathcal F}{\mathcal C}^1_b(H)$. Multiplying  both sides of  $ \lambda u_n - {\mathcal K}u_n = f_n$ by $\varphi$, integrating  over $H$ and recalling \eqref{e2.1}, we obtain
$$\lambda\int_H u_n\,\varphi\,d\nu+\frac12\int_H\langle Du_n,D\varphi\rangle\,d\nu=\int_Hf_n\,\varphi\,d\nu .$$
Letting $n\to \infty$ yields that $u$ is the weak solution to \eqref{e1.1}. So,   weak and   strong solutions to \eqref{e1.1} do coincide.  
 
 \vspace{2mm}

 As a consequence of coincidence of strong and weak solutions  we obtain a probabilistic representation formula for the weak solution to \eqref{e1.1}. 
Let $W(t)$ be any $H$-valued cylindrical Wiener process defined in a probability space $(\Omega, {\mathcal F}, \P)$. A construction of such a process may be found, for example,  in \cite[Section 4.3]{DPZrosso}. For each $x\in H$ consider the stochastic differential equation 
\begin{equation}
\label{stocastica}
dX=(AX-DU(X))dt+dW(t),\quad X(0)=x.  
\end{equation}
We recall that a mild solution to  \eqref{stocastica} is a ${\mathcal F}_t$ adapted, $H$-continuous process that satisfies
$$X(t) = e^{tA}x - \int_0^t e^{(t-s)A} DU(X(s))ds + \int_0^t e^{(t-s)A} dW(s), \quad t\geq 0, $$
where  ${\mathcal F}_t$ is the natural filtration of $W(t)$. Existence and uniqueness of a mild solution to \eqref{stocastica} follow, for example,  from \cite[Theorem 5.5.8]{DPZ2}; see also Remark 5.5.7 of \cite{DPZ2}.

\begin{Proposition}
\label{KolmLip}
For $\lambda >0$ and $f\in C_b(H)$, let $u$ be the weak solution to \eqref{e1.1}. Then
\begin{equation}
\label{finalmente}
u  = \int_0^{+\infty} e^{-\lambda t} \E f(X(t,\cdot))\,dt.
\end{equation}
\end{Proposition}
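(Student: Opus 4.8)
The plan is to prove \eqref{finalmente} first for the most regular data and then extend by density/approximation, exploiting the coincidence of weak and strong solutions established in \S\ref{Weak = strong}. First I would define the candidate function
\[
w(x) := \int_0^{+\infty} e^{-\lambda t}\,\E\,f(X(t,x))\,dt,
\]
where $X(t,x)$ is the mild solution to \eqref{stocastica}, whose existence and uniqueness are guaranteed by \cite[Theorem 5.5.8]{DPZ2}. The goal is to show $w$ equals the weak solution $u$ of \eqref{e1.1}. Since weak solutions are unique (Lax--Milgram), it suffices to show that $w$ satisfies the weak formulation \eqref{e2.2}, or equivalently that $w = (\lambda I - K)^{-1}f$.

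The natural route is to transfer to finite dimensions and pass to the limit. For $f\in {\mathcal F}{\mathcal C}^\infty_b(H)$ I would use the finite-dimensional solutions $v_n$ of \eqref{e2.50} constructed in \S\ref{core}, whose probabilistic representation \eqref{Kolm_v} reads $v_n(\xi) = \int_0^\infty e^{-\lambda t}\E\,f(X_n(t,\xi))\,dt$. Lifting to $V_n(x) = v_n(x_1,\dots,x_n)$ as in \eqref{def}, equation \eqref{Vn} shows $\lambda V_n - {\mathcal K}V_n = f + \langle DU - D(U\circ P_n), DV_n\rangle \to f$ in $L^2(H,\nu)$, so by the $m$-dissipativity of $\overline{{\mathcal K}}$ (Proposition \ref{p2.3}) together with the sup-norm bounds \eqref{e2.6}--\eqref{e2.7}, $V_n \to u$ in $L^2(H,\nu)$, where $u$ is the (weak = strong) solution. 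On the probabilistic side I would show the finite-dimensional diffusions $X_n$ converge to the infinite-dimensional mild solution $X$; concretely, that $P_nX(t,x)$ and $X_n(t,P_nx)$ become close, so that $V_n(x) \to w(x)$ pointwise (or along a subsequence, $\nu$-a.e.). Matching the two limits gives $u = w$ for $f\in {\mathcal F}{\mathcal C}^\infty_b(H)$.

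To reach general $f\in C_b(H)$ I would approximate $f$ by cylindrical functions $f_k\in {\mathcal F}{\mathcal C}^\infty_b(H)$ with $f_k\to f$ boundedly and pointwise (Lemma \ref{l1.2} provides $L^p$-density; a uniformly bounded approximating sequence can be arranged). On the resolvent side, $u_k = (\lambda I - K)^{-1}f_k \to u$ in $L^2(H,\nu)$ since $(\lambda I - K)^{-1}$ is bounded. On the representation side, dominated convergence in the path integral gives $w_k(x) = \int_0^\infty e^{-\lambda t}\E\,f_k(X(t,x))\,dt \to w(x)$ for each $x$, using $\|f_k\|_\infty$ uniformly bounded and $f_k(X(t,x))\to f(X(t,x))$ a.s. Since $u_k = w_k$ from the previous step, passing to the limit yields $u = w$ $\nu$-a.e., which is \eqref{finalmente}.

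The main obstacle I expect is the finite-dimensional approximation of the diffusion: establishing that $X_n(t,P_nx)$ genuinely approximates (the projection of) the infinite-dimensional mild solution $X(t,x)$ strongly enough to pass to the limit in $v_n$. This requires control of the error $\|P_nX(t,x) - X_n(t,P_nx)\|$ uniformly in $t$ on a suitable time scale, which hinges on the monotonicity of $DU$ (contractivity of the drift, as already used for the gradient estimate \eqref{e2.7n}) and on the convergence of the finite-dimensional noise and drift coefficients. Because $DU$ is here assumed Lipschitz, the contraction estimate $\frac{d}{dt}\|X_n(t,x)-X_n(t,y)\|^2\le 0$ and the linear-growth bound on $D(U\circ P_n)$ give the needed uniform control, and the $e^{-\lambda t}$ factor makes the time integral harmless; the delicate point is handling the unbounded linear part $A$ (equivalently $\tfrac12 Q^{-1}$) in the infinite-dimensional limit, which is precisely why the explicit identity \eqref{Vn} with its convergent remainder is the cleaner device to rely on rather than a direct pathwise comparison.
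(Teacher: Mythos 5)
Your proposal follows essentially the same route as the paper's proof: for $f\in {\mathcal F}{\mathcal C}^\infty_b(H)$ you combine the identity \eqref{Vn} (so that $V_n = R(\lambda,K)(\lambda V_n - {\mathcal K}V_n) \to u$ in $L^2(H,\nu)$ by resolvent continuity) with the lifted probabilistic representation \eqref{KolmV}, pass to the limit using the a.s.\ convergence of the approximating diffusions with drift $D(U\circ P_n)$ to the mild solution of \eqref{stocastica} together with dominated convergence, and then extend to $f\in C_b(H)$ by pointwise-bounded cylindrical approximation exactly as the paper does. The only cosmetic difference is that the paper phrases the probabilistic step via the infinite-dimensional equation \eqref{stocastica_n} (asserting $X_n(t,x)\to X(t,x)$ a.s.) rather than your pathwise comparison of $P_nX(t,x)$ with $X_n(t,P_nx)$, which amounts to the same thing.
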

\begin{proof}
As a first step, let $f\in {\mathcal FC}^{\infty}_b(H)$,   let   $V_n$ be the functions defined in \eqref{def} and set 
$f_n:= \lambda V_n - K V_n $. In Section \ref{core} we have shown that $\lim_{n\to \infty}f_n= f$  in $L^2(H, \nu)$. Therefore, 
$u = R(\lambda, K)f = \lim_{n\to \infty}R(\lambda, K)f_n = \lim_{n\to \infty}V_n$. On the other hand, we have 
$V_n(x) = v_n(x_1, \ldots, x_n)$, where 
 the functions $v_n$ solve \eqref{e2.50}. 
This implies that $V_n$ satisfies
\begin{equation}
\label{KolmV}
V_n(x)  =  \int_0^{+\infty} e^{-\lambda t} \E f(X_n(t,x))\,dt , \quad x\in H ,
\end{equation}
where $X_n$ is the mild solution to 
\begin{equation}
\label{stocastica_n}
dX_n=(AX_n-D(U\circ P_n)(X_n))dt+dW(t),\quad X_n(0)=P_nx, 
\end{equation}
and for every $t>0$, $x\in X$ we have $\lim_{n\to\infty}X_n(t,x) = X(t,x)$, a.s.  
Letting $n\to \infty$ in \eqref{KolmV}, the left-hand  side goes to $u$ in $L^2(H, \nu)$. The right-hand  side converges to 
$ \int_0^{+\infty} e^{-\lambda t} \E f (X (t,x))\,dt $ pointwise and in $L^2(H, \nu)$ by the dominated convergence theorem. Indeed, for each $x\in H$ and $t>0$ 
we have $\lim_{n\to \infty}f (X_n(t,x)) = f(X(t,x))$ a.s.,  and $|f (X_n(t,x))| \leq  \|f\|_{\infty} $. 
Therefore, the statement holds if $f\in {\mathcal FC}^{\infty}_b(H)$. 

If $f\in C_b(H)$, 
it is possible to approach it, pointwise and in $L^2(H, \nu)$, by a sequence $(f_n)$ of functions belonging to ${\mathcal FC}^{\infty}_b(H)$. For instance, one can take approximations by convolution of $f\circ P_n$. Then, $u_n:=R(\lambda, K)f_n$ satisfy \eqref{finalmente} with $f$ replaced by $f_n$ and converge to $u=R(\lambda, K)f$ in $L^2(H, \nu)$. The right-hand  sides converge to $\int_0^{+\infty} e^{-\lambda t} \E f(X(t,\cdot))\,dt$  in $L^2(H, \nu)$, again by the dominated convergence theorem, and the statement follows. 
\end{proof}

\subsection{The general case}
\label{sect:general}

Here we apply the results of Section \ref{sect:Lipschitz} to prove our main result.

\begin{Theorem}
\label{t2.7}
Under Hypothesis  \ref{Hyp},  for every $\lambda >0$ and $f\in  L^2(H, \nu)$, the weak solution $u$ to \eqref{e1.1}  belongs to $W^{2,2}(H, \nu)$ $\cap $ $W^{1,2}_{-1/2}(H, \nu)$, and it satisfies 
\begin{equation}
\label{e2.17}
\int_H u ^2d\nu \le \frac1{\lambda^2}\;\int_H f^2\,d\nu  , \quad \int_H\|Du \|^2 d\nu  \le \frac{2}{\lambda}\;\int_H f^2\,d\nu , 
\end{equation} 
\begin{equation}
\label{e2.18}
\frac{1}{2} \int_H \Tr\;[(D^2u )^2]\,d\nu   + \int_H\|Q^{-1/2}Du\|^2d\nu 
\leq  4 \int_H f^2 \,d\nu. 
\end{equation} 
\end{Theorem}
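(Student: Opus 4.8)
The plan is to deduce the general case from the Lipschitz case of Section \ref{sect:Lipschitz} by a Moreau--Yosida approximation, replacing $U$ by the functions $U_\alpha$ of \eqref{e1.8}. For each $\alpha>0$ the gradient $DU_\alpha$ is Lipschitz continuous and $U_\alpha$ is convex and bounded below by $\inf U$, so $U_\alpha$ falls under the assumptions of Section \ref{sect:Lipschitz}. I denote by $\nu_\alpha(dx)=Z_\alpha^{-1}e^{-2U_\alpha(x)}\mu(dx)$ the associated log-concave measure, by $K_\alpha$ the corresponding self-adjoint operator, and by $u_\alpha=R(\lambda,K_\alpha)f$ the weak solution of the approximating equation. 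Proposition \ref{p2.6} applied to $U_\alpha$ gives $u_\alpha\in W^{2,2}(H,\nu_\alpha)\cap W^{1,2}_{-1/2}(H,\nu_\alpha)$ together with estimate \eqref{e2.12} written with respect to $\nu_\alpha$.

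The first step is to extract uniform bounds. Since $U_\alpha$ is convex, $\langle D^2U_\alpha Du_\alpha,Du_\alpha\rangle\ge 0$, so this term may be dropped from \eqref{e2.12}; combined with \eqref{e2.9} this yields
\[
\lambda\int_H u_\alpha^2\,d\nu_\alpha+\lambda\int_H\|Du_\alpha\|^2 d\nu_\alpha+\frac12\int_H\Tr[(D^2u_\alpha)^2]\,d\nu_\alpha+\frac12\int_H\|Q^{-1/2}Du_\alpha\|^2 d\nu_\alpha\le C\int_H f^2\,d\nu_\alpha.
\]
To pass these $\nu_\alpha$-estimates to the fixed measure $\nu$ I use that $U_\alpha\le U$, whence $\frac{d\nu}{d\nu_\alpha}=\frac{Z_\alpha}{Z}e^{2(U_\alpha-U)}\le \frac{Z_\alpha}{Z}$, while $Z_\alpha\to Z$ by dominated convergence, since $e^{-2U_\alpha}\downarrow e^{-2U}$ is bounded by $e^{-2\inf U}$. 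Hence the families $(u_\alpha)$, $(Du_\alpha)$, $(D^2u_\alpha)$, $(Q^{-1/2}Du_\alpha)$ are bounded in $L^2(H,\nu)$, $L^2(H,\nu;H)$, $L^2(H,\nu;\mathcal L_2(H))$ and $L^2(H,\nu;H)$ respectively, provided $\int_H f^2\,d\nu_\alpha$ stays bounded. I therefore first assume $f\in{\mathcal F}{\mathcal C}^\infty_b(H)$, so that $\int_H f^2\,d\nu_\alpha\le\|f\|_\infty^2$ and in fact $\int_H f^2\,d\nu_\alpha\to\int_H f^2\,d\nu$, postponing general $f$ to a density argument. Extracting a subsequence I may then assume $u_\alpha\rightharpoonup \bar u$ in $L^2(H,\nu)$, $Du_\alpha\rightharpoonup G$, $D^2u_\alpha\rightharpoonup\mathcal Q$, $Q^{-1/2}Du_\alpha\rightharpoonup V$ weakly in the respective spaces. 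Because the operators $D$, $D^2$, $Q^{-1/2}D$ of Definition \ref{d1.4} are closed, their graphs are weakly closed, so $G=D\bar u$, $\mathcal Q=D^2\bar u$, $V=Q^{-1/2}D\bar u$; thus $\bar u\in W^{2,2}(H,\nu)\cap W^{1,2}_{-1/2}(H,\nu)$, and \eqref{e2.17}--\eqref{e2.18} follow from the uniform bounds, weak lower semicontinuity of the norms, and $\int_H f^2\,d\nu_\alpha\to\int_H f^2\,d\nu$.

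The main obstacle is to identify $\bar u$ with the weak solution $u$ of \eqref{e1.1} for the measure $\nu$. Writing the weak formulation for $u_\alpha$ against $\mu$, for $\varphi\in{\mathcal F}{\mathcal C}^1_b(H)$,
\[
\lambda\int_H u_\alpha\varphi\,e^{-2U_\alpha}d\mu+\frac12\int_H\langle Du_\alpha,D\varphi\rangle e^{-2U_\alpha}d\mu=\int_H f\varphi\,e^{-2U_\alpha}d\mu,
\]
I cannot pass to the limit by the weak $L^2(H,\nu)$ convergence alone, since the natural weight $e^{2(U-U_\alpha)}$ is not uniformly $\nu$-integrable. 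The key is that for $f\in{\mathcal F}{\mathcal C}^\infty_b(H)$ the Lipschitz-case bounds \eqref{e2.6} and \eqref{e2.7} give the uniform sup estimates $\|u_\alpha\|_\infty\le\lambda^{-1}\|f\|_\infty$ and $\|\,|Du_\alpha|\,\|_\infty\le\lambda^{-1}\|\,|Df|\,\|_\infty$. Consequently $u_\alpha$ and $Du_\alpha$ admit weak-$*$ limits in $L^\infty(H,\mu)$ and $L^\infty(H,\mu;H)$, which agree $\mu$-a.e. with $\bar u$ and $D\bar u$, while $\varphi\,e^{-2U_\alpha}$ and $D\varphi\,e^{-2U_\alpha}$ converge strongly in $L^1(H,\mu)$ and $L^1(H,\mu;H)$ by dominated convergence. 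Pairing weak-$*$ with strong $L^1$ convergence lets me pass to the limit in every term and obtain the weak formulation \eqref{e2.2} for $\bar u$; by uniqueness of the weak solution, $\bar u=u$. Finally, for general $f\in L^2(H,\nu)$ I approximate $f$ by bounded functions (Lemma \ref{l1.2}) and use that \eqref{e2.17}--\eqref{e2.18} are bounded-operator estimates for $R(\lambda,K)$ into $W^{2,2}(H,\nu)\cap W^{1,2}_{-1/2}(H,\nu)$, which pass to the limit by density together with the closedness of the derivative operators.
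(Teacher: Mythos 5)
Your proposal is correct, and its skeleton is exactly that of the paper: Moreau--Yosida approximation, uniform bounds from the Lipschitz case exploiting $\inf U\le U_\alpha\le U$ (your Radon--Nikodym bound $d\nu/d\nu_\alpha\le Z_\alpha/Z$ is the paper's observation $e^{-2U}\le e^{-2U_\alpha}$ in normalized form), weak compactness plus closedness of $D$, $D^2$, $Q^{-1/2}D$ to get membership in $W^{2,2}(H,\nu)\cap W^{1,2}_{-1/2}(H,\nu)$ and the estimates, reduction to smooth cylindrical $f$, and a final density argument. Where you genuinely diverge is the crux, the identification $\bar u=u$. The paper stays at the $L^2$ level: it splits each integral $\int(\lambda u_{\alpha_n}\varphi+\tfrac12\langle Du_{\alpha_n},D\varphi\rangle)e^{-2U_{\alpha_n}}d\mu$ into the same integral against $e^{-2U}d\mu$ (handled by weak $W^{1,2}(H,\nu)$-convergence) plus an error carrying the factor $1-e^{-2U+2U_{\alpha_n}}$, and kills the error by the Cauchy--Schwarz inequality with respect to $e^{-2U_{\alpha_n}}d\mu$, the uniform bounds \eqref{e2.15}, and dominated convergence applied to $\int_H(1-e^{-2U+2U_{\alpha_n}})^2e^{-2U_{\alpha_n}}d\mu$. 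You avoid the splitting altogether by invoking uniform sup-norm bounds on $u_\alpha$ and $Du_\alpha$ and pairing weak-$*$ convergence in $L^\infty(H,\mu)$ and $L^\infty(H,\mu;H)$ against strong $L^1(H,\mu)$ convergence of $\varphi e^{-2U_{\alpha_n}}$ and $D\varphi e^{-2U_{\alpha_n}}$. Both mechanisms are sound (note $\mu$ and $\nu$ are equivalent, since $U<\infty$ $\mu$-a.e., so the $\mu$-a.e. identification of the weak-$*$ limits with $\bar u$, $D\bar u$ is legitimate); the paper's argument has the advantage of using only estimates already proved for the strong solution, while yours is shorter once the sup bounds are in hand, and it isolates more cleanly why weak $L^2$ convergence alone is insufficient.

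The one step you should make explicit is the bound $\|\,\|Du_\alpha\|\,\|_{\infty}\le\lambda^{-1}\sup_H\|Df\|$ for the \emph{strong} solution $u_\alpha$ of the Lipschitz problem. Estimates \eqref{e2.6}--\eqref{e2.7} are proved only for the finite-dimensional approximations $v_n^{\eps}$, and Proposition \ref{p2.6} transfers only the sup bound on the function itself (estimate \eqref{e2.13}); the gradient analogue is nowhere stated in the paper. It is true, and follows by the same device used for \eqref{e2.13}: since $\lambda V_n^{\eps}-\mathcal{K}V_n^{\eps}\to f$ in $L^2$, estimate \eqref{e2.10} applied to differences shows that $DV_n^{\eps}\to Du_\alpha$ in $L^2(H,\nu_\alpha;H)$, hence a subsequence of gradients converges a.e., and $\|D(f\circ P_n)(x)\|=\|P_nDf(P_nx)\|\le\sup_H\|Df\|$ gives the claim in the limit. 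This is a fillable gloss rather than a gap, but as written your proof cites \eqref{e2.6}--\eqref{e2.7} for a statement they do not literally contain, so the extra limiting lemma must be supplied.
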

\begin{proof} Let $U_{\alpha}$ be the Moreau--Yosida approximations of $U$, defined in \eqref{e1.8}. Since $DU_{\alpha}$ is Lipschitz continuous, we may use the results of Sections \ref{stimeottimali},   \ref{Weak = strong} for problem
\begin{equation}
\label{e2.14}
\lambda u_{\alpha}-{\mathcal L} u_\alpha+\langle DU_\alpha,Du_\alpha   \rangle=f . 
\end{equation}
Let $Z_{\alpha} = \int_H e^{-2U_{\alpha}(x)}\mu(dx)$ and $\nu_{\alpha}:= e^{-2U_{\alpha}}\mu /Z_{\alpha} $.   
Fix any $f\in {\mathcal F}{\mathcal C}^{\infty}_b(H)$, $\lambda >0$, and let $u_{\alpha}$ be the strong solution to \eqref{e2.14} in the space $L^2(H, \nu_{\alpha})$. By Lemma \ref{l2.4}, 
\begin{equation}
\label{e2.15}
\int_H u_{\alpha}^2e^{-2U_{\alpha}}d\mu  \le \frac1{\lambda^2}\;\int_H f^2e^{-2U_{\alpha}}d\mu  , \quad \int_H\|Du_{\alpha}\|^2e^{-2U_{\alpha}}d\mu  \le \frac{2}{\lambda}\;\int_H f^2e^{-2U_{\alpha}}d\mu , 
\end{equation}
and by Proposition \ref{p2.6}, 
\begin{equation}
\label{e2.16} 
\begin{array}{l}
\ds \frac{1}{2} \int_H \Tr\;[(D^2u_{\alpha})^2]e^{-2U_{\alpha}}d\mu +
\frac{1}{2} \int_H\|Q^{-1/2}Du_{\alpha}\|^2e^{-2U_{\alpha}}d\mu
\\
\\
\ds +  \int_H \langle D^2U_{\alpha}Du_{\alpha}, Du_{\alpha} \rangle e^{-2U_{\alpha}}d\mu
\leq  4 \int_H f^2 \,e^{-2U_{\alpha}}d\mu .
\end{array}
\end{equation}
The right-hand  sides of \eqref{e2.15} and \eqref{e2.16} are bounded by a constant independent of $\alpha$,   since $U_{\alpha} \geq \inf U$ so that   
\begin{equation}
\label{f}
\int_H f^2 \,e^{-2U_{\alpha}}d\mu \leq \|f\|_{\infty}^2 e^{-2\inf U}. 
\end{equation}
Since  $U_\alpha \le U$, then $e^{-2U}\leq e^{-2U_{\alpha}}$, and it follows that $u_{\alpha}\in W^{2,2}(H, \nu)$ and their $W^{2,2}(H, \nu)$ norms are bounded by a constant independent of $\alpha$. A sequence $(u_{\alpha_n})$, with $\lim_{n\to \infty}\alpha_n =0$,  converges weakly in $W^{2,2}(H, \nu)$ and   in $W^{1,2}_{-1/2}(H, \nu)$  to a limit function denoted by $u$. 
Letting $n\to \infty$ yields that $u$ satisfies \eqref{e2.17} and \eqref{e2.18}. 
Our aim is to show that $u$ coincides with the weak solution to 
\eqref{e1.1}. 
For every $n$ we have
$$\lambda\int_H u_{\alpha_n}\,\varphi\,e^{-2U_{\alpha_n}}d\mu + \frac{1}{2}  \int_H\langle Du_{\alpha_n},D\varphi\rangle\,e^{-2U_{\alpha_n}}d\mu =\int_Hf\,\varphi\,e^{-2U_{\alpha_n}}d\mu,\quad \varphi \in {\mathcal F}{\mathcal C}^1_b(H). $$
Letting $n\to \infty$, the right-hand  side  converges to  
$\int_Hf\,\varphi\,e^{-2U }d\mu$. Let us split the left-hand  side as 
$$\begin{array}{l}
\ds \int_H( \lambda u_{\alpha_n}\,\varphi\ + \frac{1}{2}  \langle Du_{\alpha_n},D\varphi\rangle) e^{-2U_{\alpha_n}}d\mu =
\\
\\
\ds =  \int_H( \lambda u_{\alpha_n}\,\varphi\ + \frac{1}{2}  \langle Du_{\alpha_n},D\varphi\rangle) e^{-2U}d\mu 
 + \int_H( \lambda u_{\alpha_n}\,\varphi\ + \frac{1}{2}  \langle Du_{\alpha_n},D\varphi\rangle) ( 1 -  e^{-2U +2U_{\alpha_n}} )e^{-2U_{\alpha_n}} d\mu .
\end{array}$$
The first integral converges to $\int_H( \lambda u \,\varphi\ + \frac{1}{2}  \langle Du ,D\varphi\rangle)e^{-2U}d\mu$. We claim that the second integral too vanishes as $n\to \infty$. Indeed, by the H\"older inequality with respect to the measure $  e^{-2U_{\alpha_n}} d\mu $,   its modulus  is bounded by 
$$\begin{array}{l}
\ds
\bigg( \int_H( \lambda u_{\alpha_n}\,\varphi\ +\frac12 \langle Du_{\alpha_n},D\varphi\rangle)^2 e^{-2U_{\alpha_n}}d\mu \bigg)^{1/2}
\bigg(  \int_H ( 1 -  e^{-2U +2U_{\alpha_n}} )^2e^{-2U_{\alpha_n}} d\mu   \bigg)^{1/2}
\\
\\
\ds \leq   \|\varphi\|_{C^1_b(H)} (\|\lambda u_{\alpha_n}\|_{L^2(H, e^{-2U_{\alpha_n}}\mu)} + \frac{1}{2} 
\| \,\|Du_{\alpha_n}\|\, \|_{L^2(H,e^{-2U_{\alpha_n}}\mu)})  \bigg(  \int_H ( 1 -  e^{-2U +2U_{\alpha_n}} )^2e^{-2U_{\alpha_n}} d\mu   \bigg)^{1/2}. 
\end{array}$$
Recalling \eqref{f}, \eqref{e2.15} implies now that 
$$\|\lambda u_{\alpha_n}\|_{L^2(H, e^{-2U_{\alpha_n}}\mu)} + \frac{1}{2} 
\| \,\|Du_{\alpha_n}\|\, \|_{L^2(H,e^{-2U_{\alpha_n}}\mu)}$$
 is bounded by a constant independent of $n$. Moreover $\int_H ( 1 -  e^{-2U +2U_{\alpha_n}} )^2e^{-2U_{\alpha_n}} d\mu $ vanishes as $n\to \infty$ by the dominated convergence theorem, and the claim is proved. 

Therefore, $u$ satisfies \eqref{e2.2} for every $\varphi \in {\mathcal F}{\mathcal C}^1_b(H)$, and hence it is the weak solution to \eqref{e1.1}. 

If $f\in  L^2(H, \nu)$, there is a sequence of ${\mathcal F}{\mathcal C}^{\infty}_b(H)$ functions that converge to $f$ in $L^2(H, \nu)$. 
The sequence $(R(\lambda, K)f_k)$ of the weak solutions to \eqref{e1.1}  with $f$ replaced by  $f_k$ converge to the weak solution $u = R(\lambda, K)f$ of  \eqref{e1.1}, and it is a Cauchy sequence in $W^{2,2}(H, \nu)$ and   in $W^{1,2}_{-1/2}(H, \nu)$ by estimate \eqref{e2.18}. 
Then $u\in  W^{2,2}(H, \nu)$ $\cap $ $W^{1,2}_{-1/2}(H, \nu)$, and it satisfies 
\eqref{e2.18} too. \end{proof}

\subsection{Another maximal estimate}

Under further assumptions we may recover the full estimate on $Du$ that holds in the case that $DU$ is Lipschitz continuous. In fact, we shall show below that 
\begin{equation}
\label{e2.19} 
 \int_H \langle D^2U\,Du, Du \rangle d\nu
\leq  4 \int_H f^2 d\nu , 
\end{equation}
in the case where $U\in C^2(H)$, while in Section 4.2 it will be proved in a specific example with  $U\notin C^2(H)$.  
Here and in the following, we denote by $C^2(H)$ the space of the twice Fr\'echet differentiable functions from $H$ to $\R$, with continuous second order derivative. 

We need  a preliminary result.

\begin{Lemma}
\label{l2.8}
Under Hypothesis  \ref{Hyp}, for each $f\in C_b(H)$ there is $\alpha_n\to 0$ such that $u_{\alpha_n}\to u$ in $W^{1,2}(H, \nu)$ as $n\to \infty$. 
\end{Lemma}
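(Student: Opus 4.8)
The plan is to upgrade the weak convergence already obtained in the proof of Theorem~\ref{t2.7} to strong convergence in $W^{1,2}(H,\nu)$, by combining the energy identities of Lemma~\ref{l2.4} with weak lower semicontinuity of the $L^2$ norms. Write $g_\alpha:=e^{-2U_\alpha}$ and $g:=e^{-2U}$, so that $g_\alpha\downarrow g$ pointwise and, since $U_\alpha\ge\inf U$, the densities satisfy $g\le g_\alpha\le e^{-2\inf U}$ uniformly in $\alpha$. From \eqref{e2.15} and $g\le g_\alpha$ one obtains bounds on $\int_H u_\alpha^2\,d\nu$ and $\int_H\|Du_\alpha\|^2\,d\nu$ that are independent of $\alpha$; hence, by reflexivity, along a sequence $\alpha_n\to0$ we may assume $u_{\alpha_n}\rightharpoonup u$ weakly in $W^{1,2}(H,\nu)$, the limit being the weak solution by the identification carried out in Theorem~\ref{t2.7} (only $\|f\|_\infty$ bounds and dominated convergence are used there, so the argument applies to $f\in C_b(H)$). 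In particular $u_{\alpha_n}\rightharpoonup u$ in $L^2(H,\nu)$ and $Du_{\alpha_n}\rightharpoonup Du$ in $L^2(H,\nu;H)$.

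First I would record two energy identities. Testing the weak formulation \eqref{e2.2} for $u$ with $\varphi=u$ gives $\lambda\int_H u^2\,d\nu+\tfrac12\int_H\|Du\|^2\,d\nu=\int_H uf\,d\nu$, while the analogue of Lemma~\ref{l2.4} for the Lipschitz datum $U_{\alpha_n}$, written with respect to $\mu$ (multiplying through by $Z_{\alpha_n}$), reads
\[
\lambda\int_H u_{\alpha_n}^2 g_{\alpha_n}\,d\mu+\tfrac12\int_H\|Du_{\alpha_n}\|^2 g_{\alpha_n}\,d\mu=\int_H u_{\alpha_n}f\,g_{\alpha_n}\,d\mu.
\]
The right-hand side converges to $\int_H uf\,g\,d\mu$: splitting $g_{\alpha_n}=g+(g_{\alpha_n}-g)$, the term with $g$ equals $Z\int_H u_{\alpha_n}f\,d\nu$ and converges by weak convergence in $L^2(H,\nu)$, while the term with $g_{\alpha_n}-g$ is controlled by Cauchy--Schwarz, using the uniform bound on $\int_H u_{\alpha_n}^2 g_{\alpha_n}\,d\mu$ together with $(g_{\alpha_n}-g)^2/g_{\alpha_n}\le g_{\alpha_n}-g\to0$ and dominated convergence. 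By the first identity, this common limit equals $\lambda\int_H u^2 g\,d\mu+\tfrac12\int_H\|Du\|^2 g\,d\mu$.

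Next I would exploit lower semicontinuity. Setting $P_n:=\int_H u_{\alpha_n}^2 g_{\alpha_n}\,d\mu$ and $R_n:=\int_H\|Du_{\alpha_n}\|^2 g_{\alpha_n}\,d\mu$, the inequality $g_{\alpha_n}\ge g$ and weak lower semicontinuity of the $L^2(H,\nu)$ and $L^2(H,\nu;H)$ norms yield $\liminf_n P_n\ge\int_H u^2 g\,d\mu$ and $\liminf_n R_n\ge\int_H\|Du\|^2 g\,d\mu$. Since the energy identity only controls the combination $\lambda P_n+\tfrac12 R_n$, whose limit is exactly $\lambda\int_H u^2 g\,d\mu+\tfrac12\int_H\|Du\|^2 g\,d\mu$, superadditivity of $\liminf$ forces both inequalities to be equalities, and a $\limsup$ comparison then pins down each sequence, giving $P_n\to\int_H u^2 g\,d\mu$ and $R_n\to\int_H\|Du\|^2 g\,d\mu$. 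Finally, sandwiching $\int_H u_{\alpha_n}^2 g\,d\mu\le P_n$ and $\int_H\|Du_{\alpha_n}\|^2 g\,d\mu\le R_n$ yields $\limsup_n\|u_{\alpha_n}\|_{L^2(H,\nu)}\le\|u\|_{L^2(H,\nu)}$ and $\limsup_n\|Du_{\alpha_n}\|_{L^2(H,\nu;H)}\le\|Du\|_{L^2(H,\nu;H)}$; combined with the weak convergences this forces convergence of the norms, hence strong convergence $u_{\alpha_n}\to u$ in $L^2(H,\nu)$ and $Du_{\alpha_n}\to Du$ in $L^2(H,\nu;H)$, that is, in $W^{1,2}(H,\nu)$.

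The main obstacle is that the energy identities for the approximations live on the moving measures $e^{-2U_{\alpha_n}}\mu$ while the target convergence is with respect to the fixed $\nu$, and the identity moreover controls only the sum of the $L^2$ and gradient energies rather than each separately. The device that resolves both difficulties is the monotonicity $g\le g_{\alpha_n}\le e^{-2\inf U}$: it simultaneously supplies a fixed dominating function for the dominated-convergence steps, lets one pass from $g_{\alpha_n}$-integrals down to $g$-integrals, and, together with the two separate lower-semicontinuity bounds, decouples the single energy identity into convergence of each norm. I emphasize that only Hypothesis~\ref{Hyp} is used; in particular the compact embedding of Proposition~\ref{p1.7}, which would require Hypothesis~\ref{Hyp1}, is deliberately avoided.
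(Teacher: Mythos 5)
Your proposal is correct and follows essentially the same route as the paper: the same energy identity from Lemma~\ref{l2.4} tested against the identity $\int_H fu\,d\nu=\int_H(\lambda u^2+\tfrac12\|Du\|^2)\,d\nu$, the same splitting $e^{-2U_{\alpha_n}}=e^{-2U}+(e^{-2U_{\alpha_n}}-e^{-2U})$ with Cauchy--Schwarz and dominated convergence (your bound $(g_{\alpha_n}-g)^2/g_{\alpha_n}\le g_{\alpha_n}-g$ is exactly the paper's $(1-e^{2U_{\alpha_n}-2U})^2e^{-2U_{\alpha_n}}$ term), and the same use of $g\le g_{\alpha_n}$ plus weak lower semicontinuity. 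The only difference is your final decoupling of $P_n$ and $R_n$ via superadditivity of $\liminf$, which the paper sidesteps by observing that $\lambda\int u^2\,d\nu+\tfrac12\int\|Du\|^2\,d\nu$ is itself an equivalent Hilbert norm on $W^{1,2}(H,\nu)$, so that weak convergence together with $\limsup$ of this single norm suffices; both endgames are valid, yours yielding slightly more (separate convergence of each term) at the cost of an extra step.
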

\begin{proof}
We already know  that there exists  a sequence  $(u_{\alpha_n})$ weakly convergent to  $u$ in $W^{1,2}(H, \nu)$. So, it is enough to show that
\begin{equation}
\label{e2.20}
\limsup_{n\to\infty}  |u_{\alpha_n} |_{W^{1,2}(H, \nu)} \leq   |u  |_{W^{1,2}(H, \nu)}.
\end{equation}
for some equivalent norm $|\cdot |_{W^{1,2}(H, \nu)} $ in $W^{1,2}(H, \nu)$. 

By Lemma \ref{l2.4} we have
$$\int_H (\lambda |u_{\alpha_n}|^2 + \frac{1}{2}\|Du_{\alpha_n}\|^2) e^{-2U_{\alpha_n}}d\mu
= \int_H f u_{\alpha_n}e^{-2U_{\alpha_n}}d\mu .$$
We claim that the right-hand  side converges  to $Z\int_H f u \, d\nu$ as
$n\to \infty$.  In fact we have 
$$ \int_H f  u_{\alpha_n}e^{-2U_{\alpha_n}} d\mu =  \int_H f u_{\alpha_n} e^{-2U }d\mu
+ \int_H f u_{\alpha_n} (1-  e^{2U_{\alpha_n}-2U })e^{-2U_{\alpha_n}}d\mu , $$
where the first addendum  tends to $Z\int_H f u  d\nu$, and the second one is estimated by 
$$ \left |\int_H f u_{\alpha_n} (1-  e^{2U_{\alpha_n}-2U })e^{-2U_{\alpha_n}}d\mu\right| \le \|f\|_{\infty}  \|u_{\alpha_n}\|_{L^2(H, e^{-2U_{\alpha_n}}\mu)}
\int_H (1-e^{ 2U_{\alpha_n} -2U })^2e^{-2U_{\alpha_n}}d\mu ,
$$
which vanishes as $n\to \infty$ because
$ \|u_{\alpha_n}\|_{L^2(H, e^{-2U_{\alpha_n}}\mu)}$ is bounded and 
$$\lim_{n\to \infty} \int_H (1-e^{ 2U_{\alpha_n} -2U })^2e^{-2U_{\alpha_n}}d\mu = 0$$     
by the the dominated convergence theorem. 

Therefore  we have
$$  \limsup_{n\to \infty}\int_H (\lambda u_{\alpha_n}^2 + \tfrac{1}{2}\|Du_{\alpha_n}\|^2)e^{-2U }d\mu
  \leq 
\limsup_{n\to \infty} \int_H (\lambda |u_{\alpha_n}|^2 + \tfrac{1}{2}\|Du_{\alpha_n}\|^2) e^{-2U_{\alpha_n}}d\mu 
  = Z\int_H f u \, d\nu .$$
Moreover
$$\int_H f u \, d\nu = \int_H (\lambda u^2 + \tfrac{1}{2}\|Du \|^2)d\nu,$$
so that
$$\limsup_{n\to \infty}
\int_H (\lambda |u_{\alpha_n}|^2 + \tfrac{1}{2}\|Du_{\alpha_n}\|^2)d\nu \leq 
\int_H (\lambda u^2 + \frac{1}{2}\|Du \|^2)d\nu ,$$
and \eqref{e2.20} follows. 
\end{proof}

Now we can  prove estimate \eqref{e2.19}.

\begin{Theorem}
\label{t2.10}
Let $U$ be a $C^2$ function satisfying Hypothesis \ref{Hyp}. Then  \eqref{e2.19} is fulfilled for all  $f\in   L^2(H, \nu)$. 
 \end{Theorem}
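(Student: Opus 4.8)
The plan is to establish \eqref{e2.19} first for $f\in C_b(H)$ and then extend it to arbitrary $f\in L^2(H,\nu)$ by density, in both passages exploiting the fact that the integrand $\langle D^2U\,Du,Du\rangle$ is \emph{nonnegative} (because $U$, and each Moreau--Yosida approximation $U_\alpha$, is convex) together with Fatou's lemma. For $f\in C_b(H)$ I would start from the approximating estimate \eqref{e2.16} for the strong solutions $u_\alpha$ of \eqref{e2.14}, discard the two nonnegative terms carrying $\Tr\,[(D^2u_\alpha)^2]$ and $\|Q^{-1/2}Du_\alpha\|^2$, and keep
$$\int_H \langle D^2U_\alpha Du_\alpha, Du_\alpha\rangle\, e^{-2U_\alpha}\,d\mu \leq 4\int_H f^2 e^{-2U_\alpha}\,d\mu .$$
Here the right-hand side converges to $4\int_H f^2 e^{-2U}\,d\mu = 4Z\int_H f^2\,d\nu$ by dominated convergence, since $U_\alpha\uparrow U$ and $e^{-2U_\alpha}\leq e^{-2\inf U}$ with $f$ bounded. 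The whole task is thus to bound the left-hand side \emph{from below} by $Z\int_H\langle D^2U\,Du,Du\rangle\,d\nu$ in the limit.

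To do this I would choose, by Lemma \ref{l2.8}, a sequence $\alpha_n\to 0$ with $u_{\alpha_n}\to u$ in $W^{1,2}(H,\nu)$, so that $Du_{\alpha_n}\to Du$ in $L^2(H,\nu;H)$; passing to a further subsequence I may assume $Du_{\alpha_n}(x)\to Du(x)$ for $\nu$-a.e.\ (hence $\mu$-a.e.) $x$. By Lemma \ref{l2.9}, $D^2U_{\alpha_n}(x)\to D^2U(x)$ in ${\mathcal L}(H)$ for every $x$. Using the elementary bound
$$|\langle Sa,a\rangle - \langle Tb,b\rangle| \leq \|S-T\|_{{\mathcal L}(H)}\|a\|^2 + \|T\|_{{\mathcal L}(H)}(\|a\|+\|b\|)\|a-b\|$$
with $S=D^2U_{\alpha_n}(x)$, $T=D^2U(x)$, $a=Du_{\alpha_n}(x)$, $b=Du(x)$, I obtain pointwise a.e.\ that $\langle D^2U_{\alpha_n}Du_{\alpha_n},Du_{\alpha_n}\rangle \to \langle D^2U\,Du,Du\rangle$. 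Since $U_{\alpha_n}\leq U$ gives $e^{-2U}\leq e^{-2U_{\alpha_n}}$ and the integrand is nonnegative, I have
$$\int_H \langle D^2U_{\alpha_n}Du_{\alpha_n},Du_{\alpha_n}\rangle\,e^{-2U}\,d\mu \leq \int_H \langle D^2U_{\alpha_n}Du_{\alpha_n},Du_{\alpha_n}\rangle\,e^{-2U_{\alpha_n}}\,d\mu \leq 4\int_H f^2 e^{-2U_{\alpha_n}}\,d\mu .$$
Applying Fatou's lemma to the left-hand side (against the \emph{fixed} measure $e^{-2U}\,d\mu = Z\,d\nu$) and dominated convergence to the right-hand side yields \eqref{e2.19} for $f\in C_b(H)$, after dividing by $Z$.

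For general $f\in L^2(H,\nu)$ I would approximate by $f_k\in C_b(H)$ with $f_k\to f$ in $L^2(H,\nu)$ (Lemma \ref{l1.2}). The resolvent estimates of Theorem \ref{t2.7} give $u_k:=R(\lambda,K)f_k\to u=R(\lambda,K)f$ in $W^{1,2}(H,\nu)$, hence $Du_k\to Du$ in $L^2(H,\nu;H)$ and, along a subsequence, pointwise $\nu$-a.e.; since $D^2U(x)$ is a bounded operator for each fixed $x$, continuity of the associated quadratic form gives $\langle D^2U\,Du_k,Du_k\rangle\to \langle D^2U\,Du,Du\rangle$ a.e., and a last Fatou argument combined with \eqref{e2.19} for each $f_k$ closes the estimate. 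The main obstacle throughout is precisely the passage to the limit in the Hessian term: because $D^2U$ may be unbounded in $x$ (the very situation in which \eqref{e2.19} is new), there is no uniform operator bound and one cannot simply invoke continuity of a bounded bilinear form on $L^2$. The delicate point is to merge the operator-norm convergence $D^2U_{\alpha_n}\to D^2U$ (Lemma \ref{l2.9}) with the a.e.\ convergence of $Du_{\alpha_n}$ extracted from the \emph{strong} $W^{1,2}$ convergence of Lemma \ref{l2.8}, so that nonnegativity and Fatou's lemma carry the limit; one must also keep track of the reference measure varying with $\alpha_n$, which is dealt with by the monotonicity $e^{-2U}\leq e^{-2U_{\alpha_n}}$.
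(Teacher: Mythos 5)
Your proof is correct and follows essentially the same route as the paper's: reduce to $f\in C_b(H)$ by density, take the sequence $u_{\alpha_n}\to u$ in $W^{1,2}(H,\nu)$ from Lemma \ref{l2.8}, extract an a.e.\ convergent subsequence of gradients, combine with Lemma \ref{l2.9} to get a.e.\ convergence of the Hessian quadratic form, and conclude by Fatou's lemma against estimate \eqref{e2.16}. The only (harmless) variations are that the paper absorbs the varying density $e^{-2U_{\alpha_n}}$ into the Fatou integrand rather than first replacing it by $e^{-2U}$ via monotonicity, and that you spell out the final $L^2$-density step (and the normalization constant $Z$) which the paper leaves implicit.
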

 \begin{proof}
Since $C_b(H)$ is dense in $L^2(H, \nu)$ it is sufficient to prove   \eqref{e2.19} when $f\in  C_b(H)$. In this case,  let  $\alpha_n\to 0$  be such that  $ u_{\alpha_n} \to u$ in $W^{1,2}(H, \nu)$ (Lemma \ref{l2.8}). Then
$Du_{\alpha_n} \to Du$ in $L^2(H, \nu; H)$  and so (possibly  replacing $(\alpha_n)$ by  a subsequence) $Du_{\alpha_n}(x) \to Du(x)$ for almost all $x$. Using Lemma \ref{l2.9}, for these $x$  we have
$$
\lim_{n\to \infty} \langle D^2U_{\alpha_n}(x) Du_{\alpha_n}(x), Du_{\alpha_n}(x)\rangle e^{-2U_{\alpha_n}(x)}=
\langle D^2U (x) Du (x), Du (x)\rangle  e^{-2U (x)}, 
$$
and by Fatou's Lemma, 
$$\begin{array}{l}
\ds \int_H \langle D^2U (x) Du (x), D (x)\rangle   d\nu 
 = \int_H \langle D^2U (x) Du (x), D (x)\rangle e^{-2U (x)} d\mu 
 \\
 \\
 \ds \leq \liminf_{n\to \infty} \int_H 
\langle D^2U_{\alpha_n}(x) Du_{\alpha_n}(x), Du_{\alpha_n}(x)\rangle e^{-2U_{\alpha_n}(x)}d\mu
\\
\\
\ds \leq 4 \liminf_{n\to \infty} \int_H f^2 \, e^{-2U_{\alpha_n}}d\mu = 4  \int_H f^2 \, d\nu .
\end{array}$$
\end{proof}

\section{Perturbations}
\label{Per}

The regularity results and estimates of Section 3 open the way to new results for nonsymmetric Kolmogorov operators, by perturbation. Here we consider the operator $K_1 $ in the space $L^2(H, \nu)$ defined by 
\begin{equation}
\label{e3.1}
D(K_1) = D(K), \quad K_1v := Kv +\langle B(x), Dv(x)\rangle  
\end{equation}
with a (possibly) nongradient field  $B:H\mapsto H$. 

We shall give two perturbation results, the first one in the general case (Section 4.1) and the second one in the case where 
the weak solution to \eqref{e1.1} satisfies \eqref{e2.19}   (Section 4.2).  
In both cases we shall use the next proposition and a part of its proof.

\begin{Proposition}
\label{p3.1}
Let $A$ be a self-adjoint  dissipative operator in $L^2(H, \nu)$, and let  ${\mathcal B}: D(A)\mapsto L^2(H, \nu)$ be a linear operator such that 
\begin{equation}
\label{e3.2}
\|{\mathcal B}v\|_{L^2(H, \nu)}^2 \leq a\|Av\|_{L^2(H, \nu)}^2 + b \|v\|_{L^2(H, \nu)}^2, \quad v\in D(A), 
\end{equation}
for some $a<1/(\sqrt{2}+1)^2$  and $b>0$. Then the operator
$$A_1: D(A)\mapsto L^2(H, \nu), \quad A_1v = Av + {\mathcal B}v$$
generates an analytic semigroup in $L^2(H, \nu)$. 
\end{Proposition}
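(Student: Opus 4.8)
The plan is to prove that $A_1$ is sectorial of some half-angle strictly larger than $\pi/2$, which is equivalent to generation of an analytic semigroup. Since $A$ is self-adjoint and dissipative, its spectrum is contained in $(-\infty,0]$, so that $A$ itself generates a bounded analytic semigroup and has good resolvent bounds on sectors. First I would fix the open sector $\Sigma:=\{z\in\C\setminus\{0\}:|\arg z|<3\pi/4\}$ and observe that for $\lambda\in\Sigma$ one has $\dist(\lambda,(-\infty,0])\ge|\lambda|/\sqrt2$ (the worst case being the two boundary rays $\arg\lambda=\pm3\pi/4$). By the spectral theorem this gives the two estimates $\|(\lambda-A)^{-1}\|\le\sqrt2/|\lambda|$ and, using the identity $A(\lambda-A)^{-1}=-I+\lambda(\lambda-A)^{-1}$, the bound $\|A(\lambda-A)^{-1}\|\le1+\sqrt2$, both uniform for $\lambda\in\Sigma$.

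The heart of the argument is to control the bounded operator $\mathcal B(\lambda-A)^{-1}$ on $L^2(H,\nu)$ and make its norm strictly smaller than $1$ for large $|\lambda|$. Writing $v=(\lambda-A)^{-1}w$ and applying \eqref{e3.2} together with $\sqrt{x+y}\le\sqrt x+\sqrt y$, I obtain $\|\mathcal B(\lambda-A)^{-1}w\|\le\sqrt a\,\|A(\lambda-A)^{-1}w\|+\sqrt b\,\|(\lambda-A)^{-1}w\|$, whence, by the two bounds above, $\|\mathcal B(\lambda-A)^{-1}\|\le\sqrt a\,(1+\sqrt2)+\sqrt{2b}/|\lambda|$ for $\lambda\in\Sigma$. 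The hypothesis $a<1/(\sqrt2+1)^2$ is exactly what forces the leading constant $q_0:=\sqrt a\,(1+\sqrt2)$ to be strictly below $1$; choosing $R>0$ so that $\sqrt{2b}/|\lambda|\le(1-q_0)/2$ for $|\lambda|\ge R$, I get $\|\mathcal B(\lambda-A)^{-1}\|\le q<1$ on $\Sigma\cap\{|\lambda|\ge R\}$.

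On this region I would then factor $\lambda-A_1=(I-\mathcal B(\lambda-A)^{-1})(\lambda-A)$. Since $\|\mathcal B(\lambda-A)^{-1}\|\le q<1$, the first factor is invertible by the Neumann series, so $\lambda\in\rho(A_1)$ and $(\lambda-A_1)^{-1}=(\lambda-A)^{-1}(I-\mathcal B(\lambda-A)^{-1})^{-1}$ satisfies $\|(\lambda-A_1)^{-1}\|\le\sqrt2/((1-q)\,|\lambda|)$. Finally I would move the vertex of the sector to a point $\omega>0$, taking $\omega\ge R\sqrt2$, so that the translated sector $\{\lambda:|\arg(\lambda-\omega)|<3\pi/4\}$ is contained in $\Sigma\cap\{|\lambda|\ge R\}$; this yields a resolvent estimate of the form $\|(\lambda-A_1)^{-1}\|\le M/|\lambda-\omega|$ on a sector of half-angle $3\pi/4>\pi/2$, which is precisely the sectoriality condition guaranteeing that $A_1$ generates an analytic semigroup.

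The main obstacle — and the only place where the precise numerical threshold intervenes — is the second step: one must have resolvent bounds for $A$ on a sector genuinely wider than the right half-plane and combine them with the relative bound \eqref{e3.2} so that the resulting constant stays below $1$. The choice of the $3\pi/4$-sector is exactly what converts the elementary estimate $\|A(\lambda-A)^{-1}\|\le1+\sqrt2$ into the threshold $a<1/(\sqrt2+1)^2$; once $q<1$ is secured, the factorization, the role of the lower-order constant $b$, and the relocation of the vertex are routine.
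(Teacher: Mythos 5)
Your argument is correct and arrives at exactly the stated threshold, but it packages the proof differently from the paper, and the difference is worth recording. The paper complexifies, uses the generic bound $\|\lambda R(\lambda,\mathcal A)\|_{{\mathcal L}({\mathcal X})}\le 1/\cos(\theta/2)\le \sqrt2$ on the right half-plane, and then states and proves (via the contraction map $Tv={\mathcal B}R(\lambda,{\mathcal A})v+f$) an abstract perturbation lemma: a relative bound with constant $c_1<1/(M+1)$ against a generator with half-plane bound $M$ yields again an analytic semigroup, the conclusion resting on the standard criterion that a resolvent estimate $\|\lambda R(\lambda,\cdot)\|\le M'$ on a half-plane already implies sectoriality. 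Your inversion of $I-{\mathcal B}(\lambda-A)^{-1}$ by Neumann series is the same mechanism as that fixed-point argument (and your constant $\sqrt a\,(1+\sqrt2)$ is exactly the paper's $c_1(M+1)$ with $c_1=\sqrt a$, $M=\sqrt2$), but you exploit self-adjointness more directly: the spectral theorem gives $\|(\lambda-A)^{-1}\|\le \dist(\lambda,(-\infty,0])^{-1}\le\sqrt2/|\lambda|$ on the whole sector $|\arg\lambda|<3\pi/4$, so you can verify the definition of sectoriality (a resolvent bound on a sector of half-angle $3\pi/4>\pi/2$, after translating the vertex) by hand, with no appeal to the half-plane criterion. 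Your proof is therefore more self-contained; the paper's lemma is more general, since it applies to any generator satisfying the half-plane bound, not only self-adjoint ones --- a generality that is not actually needed here. It is a small curiosity of the two normalizations that the same threshold $a<1/(\sqrt2+1)^2$ emerges from both: $\sqrt 2 = 1/\cos(\pi/4)$ on the half-plane coincides with the distance distortion factor $1/\sin(3\pi/4)$ on your wider sector.

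One point you should make explicit: $L^2(H,\nu)$ is here a real Hilbert space, so the spectrum, the sectors and the complex resolvents in your argument live in the complexification $L^2(H,\nu;\C)$, exactly as in the paper. This costs nothing: because \eqref{e3.2} is stated with squares, it extends to the complexification with the same constants $a$, $b$ (apply it separately to real and imaginary parts and add), and once the analytic semigroup is produced on the complexification one observes --- as the paper does --- that it maps real functions to real functions (for instance, the resolvent at large real $\lambda$ is a Neumann series of real operators), so that its restriction is an analytic semigroup on $L^2(H,\nu)$.
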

\begin{proof}
Let us denote by  ${\mathcal X} = L^2(H, \nu; \C)$ the complexification of $L^2(H, \nu)$ and by ${\mathcal A}$ the complexification of $A$, ${\mathcal A}(u+iv) = Au + iAv$. Then the spectrum of ${\mathcal A}$ is contained in $(-\infty, 0]$ and we have $\|\lambda R(\lambda, {\mathcal A})\|_{{\mathcal L}(\mathcal X)} \leq 1/\cos(\theta/2)$ for $\lambda \in \C\setminus (-\infty, 0]$, with $\theta = \arg \lambda$. Hence, for Re$\,\lambda >0$ we have $\|\lambda R(\lambda, {\mathcal A})\|_{{\mathcal L}(\mathcal X)} \leq \sqrt{2}$. 

A standard general perturbation result for analytic semigroups in Banach spaces  states that if the generator ${\mathcal A}$ of an analytic semigroup in a complex Banach space ${\mathcal X}$ satisfies $\|\lambda R(\lambda, {\mathcal A})\|_{{\mathcal L}(\mathcal X)} \leq M$ for Re$\,\lambda > \omega$, then for any linear perturbation ${\mathcal B}: D({\mathcal A})\mapsto \mathcal X$ that satisfies
$$\|{\mathcal B}v\|_{\mathcal X} \leq c_1\|Av\|_{\mathcal X} + c_2 \|v\|_{\mathcal X}, \quad v\in D({\mathcal A}), $$
with $c_1<1/(M+1)$ and  $c_2\in \R$, the sum ${\mathcal A}+ {\mathcal B}: D({\mathcal A})\mapsto \mathcal X$ generates an analytic semigroup in $\mathcal X$. We write down a proof, which will be used later. 

For Re$\,\lambda > \omega$ the resolvent equation $\lambda u - ({\mathcal A}+ {\mathcal B})u = f$ is equivalent (setting $\lambda u - {\mathcal A}u = v$) to 
the fixed point problem $v = Tv$, with $T:{\mathcal X}\mapsto {\mathcal X}$, $Tv = {\mathcal B}R(\lambda, {\mathcal A})v + f$. We have 
$$\| Tv\| \leq c_1 \| {\mathcal A}R(\lambda, A)v\| + c_2 \|  R(\lambda, {\mathcal A})v\| \leq c_1(M+1)\|v\| + \frac{c_2M}{|\lambda |}\|v\|, \quad v\in {\mathcal X}. $$
Fix $\omega_0>\omega$ such that $C:= c_1(M+1)+ c_2M/\omega_0 <1 $. Then for every $\lambda $ in the halfplane 
 Re$\,\lambda \geq  \omega_0$ $T$ is a contraction with constant $C$, the equation $v = Tv$ has a unique solution $v\in \mathcal X$ and $\|v\|\leq \|f\|/(1-C)$, and the resolvent equation $\lambda u - A_1u = f$ has a unique solution $u = R(\lambda, A)v$ with $\|u\|\leq M\|f\|/|\lambda| (1-C)$, and the statement follows. 

In our case we can take $\omega =0$ and $M= \sqrt{2}$. Assumption \eqref{e3.2} implies that $\|{\mathcal B}v\|_{\mathcal X} \leq \sqrt{a}\|Av\|_{\mathcal X} + \sqrt{b} \|v\|_{\mathcal X}$, for every $v\in D({\mathcal A})$, so we require $a<1/(\sqrt{2}+1)^2$. Once we know that ${\mathcal A}+ {\mathcal B}$ generates an analytic semigroup $T(t)$ in $L^2(H, \nu; \C)$, it is sufficient to remark that the restriction of $T(t)$ to  $L^2(H, \nu)$ preserves $L^2(H, \nu)$, and it is an analytic semigroup in $L^2(H, \nu)$. 
\end{proof}

\subsection{First perturbation}

\begin{Proposition}
\label{p3.2}
Let $U$ satisfy Hypothesis \ref{Hyp}. Let $B:H\mapsto H$ be $\mu$-measurable (hence, $\nu$-measurable) and such that there exist $c_1 \in (0,1/2(\sqrt{2}+1))$, $c_2>0$ such that for a.e. $x\in H$ we have
\begin{equation}
\label{e3.3}
|\langle B(x), y\rangle| \leq c_1 \|Q^{-1/2}y\| + c_2\|y\|, \quad y\in Q^{1/2}(H).
\end{equation}
Then the operator $K_1$ defined in \eqref{e3.1}  generates an analytic semigroup in $L^2(H, \nu)$. In particular, 
there exist  $\lambda_0\geq 0$, $C>0$ such that for every $\lambda>\lambda_0$ and for every $f\in L^2(H, \nu)$ the equation $\lambda v - K_1v =f$ has a unique solution $v\in D(K) $, and  
$$\|v\|_{D(K)} \leq C\|f\|_{L^2(H, \nu)}. $$
\end{Proposition}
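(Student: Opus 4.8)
The plan is to apply Proposition \ref{p3.1} with the self-adjoint dissipative operator $A=K$ and the perturbation $\mathcal{B}v=\langle B(\cdot),Dv\rangle$. The operator $K$ is self-adjoint by construction, and it is dissipative because $\langle Kv,v\rangle_{L^2(H,\nu)}=-\frac12\int_H\|Dv\|^2d\nu\le 0$; so the only substantial work is to verify the relative bound \eqref{e3.2}, that is, $\|\mathcal{B}v\|_{L^2(H,\nu)}^2\le a\|Kv\|_{L^2(H,\nu)}^2+b\|v\|_{L^2(H,\nu)}^2$ with some $a<1/(\sqrt2+1)^2$ and $b>0$, for every $v\in D(K)$.

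First I would insert the pointwise bound \eqref{e3.3} with $y=Dv(x)$, which gives $|\langle B(x),Dv(x)\rangle|\le c_1\|Q^{-1/2}Dv(x)\|+c_2\|Dv(x)\|$ for a.e.\ $x$; this is legitimate since $Dv(x)\in Q^{1/2}(H)$ for $\nu$-a.e.\ $x$, because $v\in W^{1,2}_{-1/2}(H,\nu)$ by Theorem \ref{t2.7}. Squaring and integrating, and using $(\alpha+\beta)^2\le(1+\eta)\alpha^2+(1+\eta^{-1})\beta^2$, I obtain
$$\|\mathcal{B}v\|_{L^2(H,\nu)}^2\le (1+\eta)c_1^2\int_H\|Q^{-1/2}Dv\|^2d\nu+(1+\eta^{-1})c_2^2\int_H\|Dv\|^2d\nu.$$

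The crucial step is to control $\int_H\|Q^{-1/2}Dv\|^2d\nu$ by $\|Kv\|^2$. For fixed $v\in D(K)$ and any $\lambda>0$, the function $v$ is precisely the weak solution of \eqref{e1.1} with datum $f_\lambda:=\lambda v-Kv$, so estimate \eqref{e2.18} of Theorem \ref{t2.7} yields $\int_H\|Q^{-1/2}Dv\|^2d\nu\le 4\|\lambda v-Kv\|_{L^2(H,\nu)}^2$. Expanding the right-hand side as $\lambda^2\|v\|^2-2\lambda\langle Kv,v\rangle+\|Kv\|^2$ and noting that $-2\lambda\langle Kv,v\rangle=\lambda\int_H\|Dv\|^2d\nu\to 0$ as $\lambda\to 0^+$, I let $\lambda\to 0^+$ to reach the clean bound $\int_H\|Q^{-1/2}Dv\|^2d\nu\le 4\|Kv\|_{L^2(H,\nu)}^2$. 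For the remaining term I would use $\int_H\|Dv\|^2d\nu=-2\langle Kv,v\rangle\le \delta\|Kv\|^2+\delta^{-1}\|v\|^2$ for arbitrary $\delta>0$, giving $\|\mathcal{B}v\|^2\le\big(4(1+\eta)c_1^2+(1+\eta^{-1})c_2^2\delta\big)\|Kv\|^2+(1+\eta^{-1})c_2^2\delta^{-1}\|v\|^2$.

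The constant matching is exactly what the hypothesis on $c_1$ is designed for: since $c_1<1/(2(\sqrt2+1))$ we have $4c_1^2<1/(\sqrt2+1)^2$ strictly, so fixing $\eta$ small and then $\delta$ small makes $a:=4(1+\eta)c_1^2+(1+\eta^{-1})c_2^2\delta$ lie below $1/(\sqrt2+1)^2$ while $b:=(1+\eta^{-1})c_2^2\delta^{-1}$ remains finite. Proposition \ref{p3.1} then yields that $K_1$ generates an analytic semigroup on $L^2(H,\nu)$. The final ``in particular'' assertion is then routine: analyticity places a half-plane $\{\mathrm{Re}\,\lambda>\lambda_0\}$ in the resolvent set, giving the unique solution $v=R(\lambda,K_1)f\in D(K_1)=D(K)$, and since $\mathcal{B}$ is $K$-bounded with relative bound below $1$ the graph norms of $K$ and $K_1$ are equivalent, whence $\|v\|_{D(K)}\le C\|f\|_{L^2(H,\nu)}$. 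I expect the main obstacle to be precisely the crucial step: turning the a priori estimate \eqref{e2.18}, stated for weak solutions with a prescribed datum, into a bound of $\int_H\|Q^{-1/2}Dv\|^2d\nu$ purely by $\|Kv\|^2$. This rests on the freedom to send the spectral parameter $\lambda\to 0^+$ together with the sign property $-\langle Kv,v\rangle\ge 0$.
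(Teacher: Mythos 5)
Your proof is correct and follows essentially the same route as the paper: reduction to Proposition \ref{p3.1}, with the key bound $\int_H\|Q^{-1/2}Dv\|^2d\nu\le 4\|Kv\|_{L^2(H,\nu)}^2$ obtained exactly as the paper's \eqref{e3.5}, namely by applying \eqref{e2.18} to $f=\lambda v-Kv$ and letting $\lambda\to 0$, followed by the same choice of small $\eps$ (your $\eta$) to keep $a<1/(\sqrt2+1)^2$. The only cosmetic difference is in the lower-order term: you bound $\int_H\|Dv\|^2d\nu$ via the form identity $\int_H\|Dv\|^2d\nu=-2\langle Kv,v\rangle_{L^2(H,\nu)}$ plus Young's inequality, whereas the paper uses its estimate \eqref{e3.4} (derived from \eqref{e2.17}) with a large spectral parameter; both devices serve identically to make the coefficient of $\|Kv\|^2$ arbitrarily small at the cost of a large coefficient on $\|v\|^2$.
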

\begin{proof}
In view of Proposition \ref{p3.1}, it is sufficient to show that the operator ${\mathcal B}$ defined in $D(K)$ by 
$${\mathcal B}u(x)= \langle B(x), Du(x)\rangle , \quad x\in H, $$
satisfies estimate 
\begin{equation}
\label{e3.2a}
\|{\mathcal B}v\|_{L^2(H, \nu)}^2 \leq a\|Kv\|_{L^2(H, \nu)}^2 + b \|v\|_{L^2(H, \nu)}^2, \quad v\in D(K),
\end{equation}
 for some  $a< (\sqrt{2}+1)^{-2}$. 
We note that for every $u\in D(K)$ we have
\begin{equation}
\label{e3.4}
\int_H \| Du\|^2 d\nu \leq 4\lambda \int_Hu^2d\nu + 
\frac{4}{\lambda} \int_H (Ku)^2d\nu, \quad \forall \lambda >0, 
\end{equation}
\begin{equation}
\label{e3.5}
 \int_H \|Q^{-1/2}Du\|^2 d\nu \leq 4 \int_H(Ku)^2d\nu .
\end{equation}
Estimate 
\eqref{e3.4} follows from  \eqref{e2.17}, taking $f= \lambda u - Ku$. Estimate \eqref{e3.5} follows from  \eqref{e2.18} taking again $f= \lambda u - Ku$, and letting $\lambda \to 0$.  Using \eqref{e3.4} and \eqref{e3.5}, for each  $\eps\in (0,1)$ and $\lambda >0$ we get 
$$\begin{array}{l}
\ds \int_H \langle B , Du \rangle ^2d\nu \leq  \int_H (c_1 \|Q^{-1/2}Du\| + c_2\|Du\|)^2 d\nu
\\
\\
\ds \leq c_1^2(1+\eps)\int_H   \|Q^{-1/2}Du\|^2d\nu + c_2^2\bigg(1+\frac{1}{\eps}\bigg)\int_H \|Du\|^2 d\nu
\\
\\
\ds \leq 4c_1^2(1+\eps)\int_H(Ku)^2d\nu + c_2^2\bigg(1+\frac{1}{\eps}\bigg)\bigg(4\lambda \int_Hu^2d\nu + 
\frac{4}{\lambda} \int_H (Ku)^2d\nu\bigg)
\end{array}$$
Since $4c_1^2< 1/(\sqrt{2}+1)^2$, there is $\eps >0$ such that $ 4c_1^2(1+\eps)< 1/(\sqrt{2}+1)^2$. Fixed such $\eps$, 
choose   $\lambda$ big enough, such that $a:= 4c_1^2(1+\eps) + 4c_2^2 (1+1/\eps)/\lambda < 1/(\sqrt{2}+1)^2$. 
With these choices estimate \eqref{e3.2a} is satisfied with $a  < 1/(\sqrt{2}+1)^2$, and the statement follows from Proposition \ref{p3.1}. 
\end{proof}

\begin{Remark}
{\em The assumptions of Proposition \ref{p3.2} are satisfied if $x\mapsto Q^{\alpha}B(x) \in L^{\infty}(H, \nu ; H)$ for some $\alpha <1/2$. Indeed, in this case for $y\in Q^{1/2}(H)$ and a.e. $x\in H$, we have}
$$|\langle B(x), y\rangle|  = |\langle Q^{ \alpha}B(x), Q^{-\alpha}y\rangle|
\leq  \|Q^{\alpha}B(\cdot)\|_{\infty} (\eps \|Q^{-1/2}y\| + c(\eps) \|y\|), \quad x\in H, \;\eps >0,  $$
{\em and choosing  $\eps$ small enough,  \eqref{e3.3} is satisfied with $c_1<1/2(\sqrt{2}-1)$. }

{\em In the case that  $x\mapsto Q^{1/2}B(x) \in L^{\infty}(H, \nu ; H)$ we need some restriction in order that the assumptions of Proposition \ref{p3.2} be satisfied. For instance, they are satisfied if $B= B_1+B_2$, with $ B_1 \in L^{\infty}(H, \nu ; H)$ and $ Q^{1/2}B_2 \in L^{\infty}(H, \nu ; H)$, 
$\|Q^{1/2}B_2\|_{\infty} \leq c_1 <1/2(\sqrt{2}+1)$. }
\end{Remark}

\subsection{Second perturbation}
 
In the case that $U\in C^2(H)$ we have also estimate \eqref{e2.19}, which is useful  when
\begin{equation}
\label{e3.6}
\langle D^2U(x)y, y\rangle \geq C(x)\|y\|^2,\quad x,y\in H, 
\end{equation}
and the function $C(x)$ is unbounded from above [if $C$ is bounded from above, \eqref{e2.19} does not add much information to \eqref{e2.17}].

\begin{Proposition}
\label{p3.4}
Let $U\in C^2(H)$ satisfy Hypothesis \ref{Hyp}. Assume moreover  that  \eqref{e3.6} holds for some unbounded $C(x)$ and that for every $\lambda>0$ and $f\in L^2(H, \nu)$ the weak solution $u$ to \eqref{e1.1} satisfies \eqref{e2.19}. Moreover, let $B:H\mapsto H$ be $\mu$-measurable and such that there exist $c_1$, $c_2$, $c_3>0$ with  $c_1^2+c_2^2< 1/8(\sqrt{2}+1)^2$, and for a.e. $x\in H$, we have
\begin{equation}
\label{e3.7}
|\langle B(x), y\rangle| \leq c_1 \|Q^{-1/2}y\| + c_2 \sqrt{C(x)} \|y\| + c_3\|y\|, \quad y\in Q^{1/2}(H). 
\end{equation}
Then the operator $K_1$  defined in \eqref{e3.1}  generates an analytic semigroup in $L^2(H, \nu)$. In particular, 
there exist  $\lambda_0\geq 0$, $C>0$ such that for every $\lambda>\lambda_0$ and for every $f\in L^2(H, \nu)$ the equation $\lambda v - K_1v =f$ has a unique solution $v\in D(K)$, and  
$$\|v\|_{D(K)} \leq C\|f\|_{L^2(H, \nu)}. $$
\end{Proposition}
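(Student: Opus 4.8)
The plan is to verify the hypotheses of Proposition \ref{p3.1}, exactly as in the proof of Proposition \ref{p3.2}, but now exploiting the additional estimate \eqref{e2.19} to absorb the extra term $c_2\sqrt{C(x)}\|y\|$ appearing in \eqref{e3.7}. First I would set $\mathcal{B}u(x) = \langle B(x), Du(x)\rangle$ for $u\in D(K)$ and, using \eqref{e3.7}, estimate pointwise
\begin{equation}
\label{pert2est}
|\mathcal{B}u(x)|^2 \leq 3\big(c_1^2\|Q^{-1/2}Du(x)\|^2 + c_2^2 C(x)\|Du(x)\|^2 + c_3^2\|Du(x)\|^2\big),
\end{equation}
using the elementary inequality $(a+b+c)^2\le 3(a^2+b^2+c^2)$. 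Integrating over $H$ with respect to $\nu$, the three resulting terms will be controlled by \eqref{e3.5}, \eqref{e2.19}, and \eqref{e3.4} respectively.

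The key new ingredient is the middle term: by \eqref{e3.6} we have $C(x)\|Du(x)\|^2 \leq \langle D^2U(x)Du(x), Du(x)\rangle$ pointwise, so
$$\int_H C(x)\|Du\|^2\,d\nu \leq \int_H \langle D^2U\,Du, Du\rangle\,d\nu \leq 4\int_H (\lambda u - Ku)^2\,d\nu,$$
where the last step applies \eqref{e2.19} with $f = \lambda u - Ku$. As in the proof of Proposition \ref{p3.2}, I would then expand $(\lambda u - Ku)^2 \leq 2\lambda^2 u^2 + 2(Ku)^2$ to split this into a $\|Ku\|^2$ contribution and a $\|u\|^2$ contribution (the latter carrying a factor $\lambda^2$, to be absorbed by choosing $\lambda$ large in the final semigroup argument, or more precisely handled via the $b\|v\|^2$ term in \eqref{e3.2}). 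The same treatment applies to the first and third terms of \eqref{pert2est} via \eqref{e3.5} and \eqref{e3.4}.

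Collecting the coefficients of $\int_H (Ku)^2\,d\nu$, the leading constant will be of the form $3(4c_1^2 + 8c_2^2) + O(1/\lambda)$ from the $\|Ku\|^2$ pieces, which is why the hypothesis is stated as $c_1^2 + c_2^2 < 1/8(\sqrt{2}+1)^2$: after the factor-of-$3$ and factor-of-$8$ bookkeeping one needs the dominant coefficient strictly below $(\sqrt{2}+1)^{-2}$. Fixing such a splitting and then taking $\lambda$ sufficiently large to make the remaining $1/\lambda$ contributions negligible, I obtain
\begin{equation}
\label{pert2final}
\|\mathcal{B}v\|_{L^2(H,\nu)}^2 \leq a\|Kv\|_{L^2(H,\nu)}^2 + b\|v\|_{L^2(H,\nu)}^2, \quad v\in D(K),
\end{equation}
with $a < (\sqrt{2}+1)^{-2}$, so that Proposition \ref{p3.1} applies and $K_1$ generates an analytic semigroup. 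The main obstacle is purely the bookkeeping: tracking the numerical constants through the three-term split so that the final coefficient of $\int_H(Ku)^2\,d\nu$ lands below the threshold $(\sqrt 2+1)^{-2}$, which forces the precise numerical form of the hypothesis on $c_1^2+c_2^2$. The solvability and resolvent estimate for large $\lambda$ then follow immediately from the analyticity, exactly as in Proposition \ref{p3.2}.
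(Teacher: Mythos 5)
Your overall strategy is exactly the paper's (verify \eqref{e3.2} for $\mathcal{B}v=\langle B,Dv\rangle$ using \eqref{e3.4}, \eqref{e3.5} and the new bound on $\int_H C(x)\|Du\|^2\,d\nu$ coming from \eqref{e3.6} and \eqref{e2.19}, then invoke Proposition \ref{p3.1}), but your constants do not close, and this is a genuine gap, not mere bookkeeping. With the symmetric split $(a+b+c)^2\le 3(a^2+b^2+c^2)$ and your expansion $(\lambda u-Ku)^2\le 2\lambda^2u^2+2(Ku)^2$, the coefficient of $\int_H(Ku)^2\,d\nu$ is $12c_1^2+24c_2^2$ plus an $O(1/\lambda)$ remainder. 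The hypothesis $c_1^2+c_2^2<1/8(\sqrt2+1)^2$ only guarantees $12c_1^2+24c_2^2<3/(\sqrt2+1)^2$, which can exceed the threshold $(\sqrt2+1)^{-2}$: for instance $c_1$ tiny and $c_2^2$ close to $1/8(\sqrt2+1)^2$ gives $24c_2^2\approx 3/(\sqrt2+1)^2$. Even if you replace your factor-$2$ expansion by the sharp middle bound, the factor $3$ alone forces $12(c_1^2+c_2^2)<(\sqrt2+1)^{-2}$, i.e.\ $c_1^2+c_2^2<1/12(\sqrt2+1)^2$, which is strictly stronger than what is assumed. So as written you prove the proposition only under a smaller admissible range of $c_1,c_2$.

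Two changes repair this, and they are what the paper does. First, use the asymmetric inequality $(a+b+c)^2\le (2+\eps)a^2+(2+\eps)b^2+(1+2/\eps)c^2$, valid for $\eps\in(0,1)$: the large factor $(1+2/\eps)$ is loaded only onto the harmless $c_3\|Du\|$ term, whose contribution is controlled by \eqref{e3.4} and killed by taking $\lambda$ large, while the dangerous $c_1$ and $c_2$ terms each carry only $(2+\eps)$. Second, do not expand $(\lambda u-Ku)^2$: since \eqref{e2.19} holds for every $\lambda>0$ with $f=\lambda u-Ku$, you may let $\lambda\to 0$ to obtain the clean estimates $\int_H\|Q^{-1/2}Du\|^2\,d\nu\le 4\int_H(Ku)^2\,d\nu$ and, via \eqref{e3.6}, $\int_H C(x)\|Du\|^2\,d\nu\le\int_H\langle D^2U\,Du,Du\rangle\,d\nu\le 4\int_H(Ku)^2\,d\nu$, with no spurious factor $2$ and no $\lambda^2\int_H u^2$ term. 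The leading coefficient then becomes $4(c_1^2+c_2^2)(2+\eps)$, which tends to $8(c_1^2+c_2^2)<(\sqrt2+1)^{-2}$ as $\eps\to0$; fixing $\eps$ small and then $\lambda$ large yields \eqref{e3.2} with $a<(\sqrt2+1)^{-2}$, and the rest of your argument (Proposition \ref{p3.1}, solvability and the resolvent estimate for $\lambda>\lambda_0$) goes through exactly as you describe.
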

\begin{proof}
We argue as in the proof of Proposition \ref{p3.2}. Here, besides estimates 
 \eqref{e3.4}  and \eqref{e3.5}, we also use  
\begin{equation}
\label{e3.8}
 \int_H \langle D^2U\,Du, Du\rangle  d\nu   \leq 4 \int_H(Ku)^2d\nu  , \quad u\in D(K), 
\end{equation}
which follows from \eqref{e2.19} taking $f=\lambda u - Ku$ and letting $\lambda \to 0$. 
By \eqref{e3.7} for each  $u\in D(K)$  we have
$$\int_H \langle B , Du \rangle ^2d\nu \leq  \int_H (c_1 \|Q^{-1/2}Du\| + c_2\sqrt{C(x)} \|Du\| + c_3\|Du\|)^2 d\nu. $$
Using the inequalities $(a+b+c)^2\leq a^2(2+\eps) + b^2(2+\eps) + c^2(1+2/\eps)$ for each $\eps\in (0, 1)$, and 
$$\int_H C(x)\|Du\|^2\,d\nu \leq \int_H  \langle D^2U\,Du, Du\rangle  d\nu   \leq 4 \int_H(Ku)^2d\nu $$
that follows from \eqref{e3.6} and  \eqref{e3.8}, 
we obtain, recalling \eqref{e3.4} and \eqref{e3.5},
$$\begin{array}{l}
\ds \int_H \langle B , Du \rangle ^2d\nu \leq 
\\
\\
\ds 
\leq c_1^2(2+\eps) \int_H  \|Q^{-1/2}Du\|^2d\nu + c_2^2 (2+\eps)\int_H C(x) \|Du\|^2d\nu + c_3^2\bigg(1+\frac{2}{\eps}\bigg)
\int_H  \|Du\|^2d\nu
\\
\\
\ds \leq 4(c_1^2 + c_2^2) (2+\eps) \int_H(Ku)^2d\nu + 
c_3^2\bigg(1+\frac{2}{\eps}\bigg)\bigg(4\lambda \int_Hu^2d\nu + 
\frac{4}{\lambda} \int_H (Ku)^2d\nu\bigg). 
\end{array}$$
As in the proof of Proposition   \ref{p3.2}, we may choose $\eps$ small and then $\lambda $ large, in such a way that for every $u\in D(K)$, we have 
$\int_H \langle B , Du \rangle ^2d\nu \leq a\int_H (Ku)^2d\nu  + b\int_H u^2d\nu  $ with $a<1/(\sqrt{2}+1)^2$, and the statement follows from Proposition \ref{p3.1}.   
\end{proof}

\begin{Remark}
{\em Assumption \eqref{e3.7} is satisfied if $B= B_1+B_2$, where 
$x\mapsto Q^{\alpha}B_1(x) \in L^{\infty}(H, \nu ; H)$ for some $\alpha \in [1/2)$ and there are $b <1/2(2+\sqrt{2})$, $c>0$ such that $\|B_2(x) \| \leq bC(x) + c$ for almost every $x\in H$. }
\end{Remark}

Theorem \ref{t2.10} allows to use Proposition \ref{p3.4} when $U\in C^2(H)$.  
In some specific examples the result of Proposition \ref{p3.4} holds when $U$ is not $C^2$, but belongs to a suitable Sobolev space. See Section 5.2.

\vspace{3mm}

We emphasize  that  the domain of the perturbed operator $K_1$ coincides with $D(K)$. 
Therefore, under the assumptions of Proposition \ref{p3.2}  for every $u\in D(K_1)$ we have
$$u\in W^{2,2}(H, \nu), \quad \int_H\|A^{-1/2}Du\|^2 d\nu <\infty, $$
and if  the assumptions of Proposition \ref{p3.4} hold, then for every $u\in D(K_1)$ we have also
$$\int_H\langle D^2U Du, Du\rangle \,d\nu <\infty .$$
An important feature of the semigroup generated by $K_1$ is positivity preserving. If $B\equiv 0$, that is $K_1=K$, Lemma \ref{l1.6} implies that $K$ satisfies the Beurling--Deny conditions that yield  positivity preserving (e.g., \cite[Sections 1.3, 1.4]{Davies}).

\begin{Proposition}
\label{p4.8}
Let the assumptions of Proposition \ref{p3.2} or of Proposition \ref{p3.4} hold, and let $\lambda_0$ be given by Proposition \ref{p3.2} or  \ref{p3.4}. 
Then for every $ \lambda >\lambda_0$ and $f\in L^2(H, \nu)$ such that $f(x)\geq 0$  a.e.,  $R(\lambda, K_1)f(x)\geq 0$ a.e. 
\end{Proposition}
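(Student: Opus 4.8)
The plan is to establish positivity first for bounded drifts and then to pass to the limit, exploiting that the positive cone of $L^2(H,\nu)$ is closed. For $m\in\N$ set $B_m := B\,\one_{\{\|B\|\le m\}}$. Since truncation can only decrease $|\langle B(x),y\rangle|$ pointwise, each $B_m$ satisfies the hypothesis \eqref{e3.3} (respectively \eqref{e3.7}) with the \emph{same} constants, so the corresponding operator $K_1^{(m)}v := Kv + \langle B_m,Dv\rangle$ on $D(K)$ falls under Proposition~\ref{p3.2} (resp.~\ref{p3.4}) with the same $\lambda_0$ and, crucially, with the same contraction constant in the fixed point argument of Proposition~\ref{p3.1}, uniformly in $m$. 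First I would prove that $R(\lambda,K_1^{(m)})$ is positivity preserving for $\lambda>\lambda_0$; then I would show $R(\lambda,K_1^{(m)})f\to R(\lambda,K_1)f$ in $L^2(H,\nu)$ and conclude.

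For fixed $m$ the field $B_m$ is bounded, so the form
\begin{equation*}
\mathfrak{a}_m(u,v) = \tfrac12\int_H\langle Du,Dv\rangle\,d\nu - \int_H\langle B_m,Du\rangle\,v\,d\nu
\end{equation*}
is continuous and sectorial on $W^{1,2}(H,\nu)$, and since $\mathcal B_m$ is $K$-bounded with bound $<1$, its associated resolvent coincides with $R(\lambda,K_1^{(m)})$ for $\lambda>\lambda_0$. To check invariance of the cone $\{u\ge 0\}$ I would use Lemma~\ref{l1.6}: for $u\in W^{1,2}(H,\nu)$ one has $u^\pm\in W^{1,2}(H,\nu)$ with $Du^+ = Du\,\one_{\{u>0\}}$ and $Du^- = -Du\,\one_{\{u<0\}}$. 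Hence $Du^+$ and $Du^-$ have disjoint supports, so the symmetric part of $\mathfrak a_m$ gives $\int_H\langle Du^+,Du^-\rangle\,d\nu=0$, which is exactly the Beurling--Deny condition for $K$ (\cite[Sections~1.3,~1.4]{Davies}); moreover the drift cross terms $\int_H\langle B_m,Du^+\rangle u^-\,d\nu$ and $\int_H\langle B_m,Du^-\rangle u^+\,d\nu$ \emph{vanish}, since $Du^+$ is supported where $u^-=0$ and vice versa. Thus the antisymmetric drift part contributes nothing to the relevant cross terms, and the criterion for invariance of closed convex sets yields that $e^{tK_1^{(m)}}$, hence $R(\lambda,K_1^{(m)})$ for $\lambda>\lambda_0$, preserves positivity.

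It remains to pass to the limit. Writing $u_m = R(\lambda,K_1^{(m)})f$ and $u=R(\lambda,K_1)f$ and subtracting the two resolvent equations, I would obtain
\begin{equation*}
u_m-u = R(\lambda,K)\big[\langle B_m,D(u_m-u)\rangle + \langle B_m - B,Du\rangle\big].
\end{equation*}
The uniform relative bound of the form \eqref{e3.2a} makes $v\mapsto R(\lambda,K)\langle B_m,Dv\rangle$ a contraction of constant $\theta<1$ independent of $m$ for $\lambda>\lambda_0$, whence $\|u_m-u\|_{L^2(H,\nu)}\le (1-\theta)^{-1}\,\|R(\lambda,K)\langle B_m-B,Du\rangle\|_{L^2(H,\nu)}$. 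Since $B_m-B = -B\,\one_{\{\|B\|>m\}}\to 0$ a.e., and $|\langle B,Du\rangle|$ is dominated by the function $c_1\|Q^{-1/2}Du\|+c_2\|Du\|$ (resp., in the setting of Proposition~\ref{p3.4}, by $c_1\|Q^{-1/2}Du\|+c_2\sqrt{C}\,\|Du\|+c_3\|Du\|$), which lies in $L^2(H,\nu)$ because $u\in D(K)\subset W^{1,2}_{-1/2}(H,\nu)$ and, in that setting, $\int_H\langle D^2U\,Du,Du\rangle\,d\nu<\infty$ controls the term with $\sqrt{C}$, the dominated convergence theorem gives $\langle B_m-B,Du\rangle\to 0$ in $L^2(H,\nu)$, and therefore $u_m\to u$ in $L^2(H,\nu)$. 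As each $u_m\ge 0$ when $f\ge 0$ and the positive cone is closed in $L^2(H,\nu)$, the limit $u=R(\lambda,K_1)f$ is nonnegative a.e. The main obstacle is the uniformity in $m$: the whole argument hinges on the truncation preserving the constants in \eqref{e3.3}/\eqref{e3.7}, so that both the contraction constant $\theta$ and the threshold $\lambda_0$ stay away from their critical values as $m\to\infty$.
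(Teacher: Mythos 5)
Your proof is correct, and its architecture is exactly the paper's: replace $B$ by bounded approximants that satisfy \eqref{e3.3} (resp.\ \eqref{e3.7}) with the \emph{same} constants, prove positivity for the bounded drift, then pass to the limit via a contraction estimate uniform in the approximation parameter plus dominated convergence, concluding by closedness of the positive cone. Within that skeleton you differ in two places, one cosmetic and one substantive. Cosmetically, you use the pure truncations $B\one_{\{\|B\|\le m\}}$, while the paper takes $B_n=nR(n,A)B\,\one_{\{\|B\|\le n\}}$; your simpler choice suffices, since truncation alone is bounded and can only decrease $|\langle B(x),y\rangle|$ pointwise. Substantively, for the bounded-drift positivity the paper argues by hand: it multiplies the resolvent equation by $u_n^-$, uses Lemma~\ref{l1.6} and Young's inequality (boundedness of $B_n$ is what lets the drift term be absorbed) to get $u_n^-\equiv 0$ for $\lambda>2\|B_n\|_\infty$, and then extends positivity down to all $\lambda>\lambda_0$ through the semigroup; you instead invoke the Ouhabaz/Beurling--Deny invariance criterion for non-symmetric sectorial forms, which requires verifying quasi-sectoriality of $\mathfrak{a}_m$ and identifying its associated operator with $K_1^{(m)}$ (your $K$-boundedness remark does settle the identification, since a generator contained in another generator must coincide with it). Your route is conceptually cleaner and yields semigroup positivity at once; the paper's is longer but self-contained, resting only on Lemma~\ref{l1.6}. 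Note that both arguments share the same final reduction: obtaining resolvent positivity at every $\lambda>\lambda_0$, rather than only for $\lambda$ large depending on $m$ (resp.\ beyond the quasi-contractivity shift of the form), rests on the growth bound of the analytic semigroup generated by the perturbed operator being at most $\lambda_0$. Finally, one small imprecision in your limit passage: the relative bound \eqref{e3.2a} controls $\langle B_m,Dw\rangle$ through $\|Kw\|$, not through $\|w\|_{L^2(H,\nu)}$, so $w\mapsto R(\lambda,K)\langle B_m,Dw\rangle$ is not literally an $L^2$-contraction; the fixed point should be run, as in Proposition~\ref{p3.1} and in the paper's proof, on $g_m=(\lambda-K)(u_m-u)$ with the contraction $v\mapsto\langle B_m,DR(\lambda,K)v\rangle$, which gives $\|u_m-u\|_{L^2(H,\nu)}\le\lambda^{-1}(1-\theta)^{-1}\|\langle B_m-B,Du\rangle\|_{L^2(H,\nu)}$ and the same conclusion.
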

\begin{proof}
Let us introduce the approximations
$$B_n(x) : = nR(n,A)B(x) \one_{\{x\in H:\;\|B(x)\|\leq n\}}, \quad n\in \N, \;x\in H,$$
that are $\mu$-measurable and   bounded in $H$. 

If the assumptions of Proposition \ref{p3.2}  hold, then each $B_n$ satisfies \eqref{e3.2} with the same constants $a$, $b$ of $B$. Indeed, since $\|nR(n,A)\|_{{\mathcal L}(H)}\leq 1$, then for every $x\in H$ and $y\in Q^{1/2}(H)$ we have
$$\begin{array}{l}
|\langle B_n(x), y\rangle | = |\langle B(x), nR(n, A)y\rangle | \one_{\{x\in H:\;\|B(x)\|\leq n\}}
\leq a \|Q^{-1/2}nR(n, A)y\| + b   \|nR(n, A) y\|
\\
\\
= a\|nR(n, A)Q^{-1/2}y\| + b   \|nR(n, A) y\|
\leq a\| Q^{-1/2}y\| + b \| y\|. 
\end{array}$$
Similarly, if the assumptions of Proposition \ref{p3.4}  hold, then $B_n$ satisfies
 \eqref{e3.7} with the same constants $c_1$, $c_2$, $c_3$ as $B$. Moreover 
 $B_{n}$ converges to  $B$ $\nu$-a.e., since 
$$B_{n}(x) - B(x) = nR(n, A)B(x) - B(x) \quad \mbox{\rm if} \;\|B(x)\| \leq n. $$

For each  $f\in L^2(H, \nu)$ we may approach  $R(\lambda, K_1)f$ by the solutions $u_{n}\in D(K)$ of problems
\begin{equation}
\label{e4.10}
\lambda u_{n} - Ku_{n} - \langle B_{n}(x), Du_{n}\rangle = f
\end{equation}
that still exist for  $\lambda >  \lambda_0$  since the functions $ B_{n}$ satisfy the assumptions of Proposition \ref{p3.1} (or, of Proposition \ref{p3.4}) with the same constants as $B$. By the proof of Propositions \ref{p3.2} and \ref{p3.4}, 
$u_{n}$ is obtained   as 
$R(\lambda, K)(I-T_{n})^{-1}$ where
$$T_{n}v = \langle B_{n}(\cdot), DR(\lambda, K)v\rangle , \quad v\in L^2(H, \nu), $$
and $(I-T_{n})^{-1}$ exists because $T$ is a contraction. We may use the principle  of contractions depending on a parameter, since
$$\|T_{n}v -Tv  \|_{ L^2(H, \nu)}^2 \leq  \int_H |\langle B - B_{n},  DR(\lambda, K)v\rangle |^2\,d\nu $$
that  vanishes as $n\to \infty$ by the dominated convergence theorem. Indeed, for $\nu$--almost every $x$ we have  $\lim_{n \to \infty} B_{n}(x) = B(x)$ and  
$$|\langle B_{n}(x), DR(\lambda, K)v(x)\rangle | \leq a\| Q^{-1/2}DR(\lambda, K)v(x)\| + b \| DR(\lambda, K)v(x)\|,$$
if the assumptions of Proposition \ref{p3.2} hold, and 
$$|\langle B_{n}(x), DR(\lambda, K)v(x)\rangle | \leq c_1\| Q^{-1/2}DR(\lambda, K)v(x)\| + c_2 \sqrt{C(x)}\| DR(\lambda, K)v(x)\| + c_3 \| DR(\lambda, K)v(x)\|, $$
if the assumptions of Proposition \ref{p3.4} hold. In both cases, the right-hand  sides belong to $L^2(H, \nu)$. 

It follows that for $\lambda > \lambda_0$ we have  $\lim_{n\to \infty} u_{n} = R(\lambda, K_1)f$, in $L^2(H, \nu)$. 
To finish the proof we show that if   $f\geq 0$ $\nu$-a.e., then $ u_{n}\geq 0$ $\nu$-a.e. This will yield the statement. 

Let us multiply both sides of  \eqref{e4.10} by  $u_{n}^-$, that belongs to  $W^{1,2}(H, \nu )$ by Lemma  \ref{l1.6}, and integrate over $H$. We get 
$$\lambda \int_H u_{n}\,u^-_{n}\, d \nu  +\frac{1}{2}\int_H \langle Du_{n}, Du^-_{n}\rangle \,  d\nu  - \int_H \langle B_{n} , Du_{n}  \rangle u^-_{n}\,  d\nu 
=  \int_H f\, u^-_{n}\,  d\nu ,$$
and recalling that  $u_{n}\,u_{n}^- = - (u_{n}^-)^2$, $\langle Du_{n}, Du_{n}^-\rangle = -\|Du_{n}^-\|^2$ by Lemma \ref{l1.6}, we obtain 
$$-\lambda \int_H (u_{n}^-)^2\,  d\nu  -\frac{1}{2} \int_H \|Du_{n}^-\|^2\,  d\nu 
- \int_H \langle B_{n} , Du_{n}  \rangle u^-_{n}\,  d\nu  \geq 0 .$$
Now we estimate
$$\bigg| \int_H \langle B_{n} , Du_{n}  \rangle u^-_{n}\,  d\nu \bigg| 
= \bigg| \int_{\{u_n \leq 0\}} \langle B_{n} , Du_{n}  \rangle u^-_{n}\,  d\nu \bigg| 
=  \bigg| \int_{H} \langle B_{n} , Du_{n}^-  \rangle u^-_{n}\,  d\nu \bigg| 
$$ 
$$\leq 
\|B_{n}\|_{\infty} \bigg(\int_H\|Du_{n}^-\|^2\,  d\nu \bigg)^{1/2} \bigg(\int_H (u_{n}^-)^2\,  d\nu \bigg)^{1/2} \leq \frac{1}{2} \int_H \|Du_{n}^-\|^2\,  d\nu  + 2\|B_{n}\|_{\infty}\int_H (u_{n}^-)^2\,  d\nu .$$
If $\lambda >C_n: = 2\|B_{n}\|_{\infty}$, we get 
$$-(\lambda -C_n)\|u_{n}^-\|^2_{L^2(H, \nu )} \geq 0$$
which implies  $u_{n}^-\equiv 0$, namely  $u_{n}\geq 0$ a.e. So, the resolvent of $K_{n}: = K +\langle B_{n}, D\cdot\rangle$ preserves positivity for $\lambda $ large, possibly depending on $n$. Since $K_{n}$ generates a $C_0$ semigroup, its resolvent preserves positivity for every $\lambda $ bigger than the type of the semigroup, in particular for every $\lambda >\lambda_0$. 
Then, 
$R(\lambda, K_1)$ preserves positivity for $\lambda >\lambda_0$. 
\end{proof}

Now we discuss the existence of an invariant  measure $\zeta(dx) = \rho(x)  \nu(dx) $ for the semigroup generated by $K_1$ in $L^2(H,\nu)$. An important step is the following proposition. 

\begin{Proposition}
\label{p4.9}
Let the assumptions of Proposition \ref{p3.2} or of Proposition \ref{p3.4} hold. Let in addition 
Hypothesis  \ref{Hyp1} hold. Then the kernel of $K_1^*$ $($the adjoint of $K_1$ in $L^2(H,\nu)$$)$ contains a nonnegative function $\rho \not\equiv 0$.  
\end{Proposition}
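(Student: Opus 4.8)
The plan is to produce $\rho$ as a nonnegative eigenfunction of the adjoint resolvent, via the Krein--Rutman theorem. The two features of $K_1$ that drive the argument are precisely the ones already established: for $\lambda>\lambda_0$ the resolvent $R:=R(\lambda,K_1)$ is \emph{compact} and \emph{positivity preserving}. I fix once and for all a real number $\lambda>\lambda_0$.

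First I would record the structural facts. By Proposition \ref{p3.2} (or \ref{p3.4}) we have $D(K_1)=D(K)$ and $R$ maps $L^2(H,\nu)$ boundedly into $D(K)$; since $D(K)\subset W^{1,2}(H,\nu)$ and, under Hypothesis \ref{Hyp1}, the embedding $W^{1,2}(H,\nu)\hookrightarrow L^2(H,\nu)$ is compact by Proposition \ref{p1.7}, the operator $R$ is compact on $L^2(H,\nu)$. Positivity of $R$ is Proposition \ref{p4.8}. Finally the constant function $\one$ lies in $D(K)=D(K_1)$ and $K_1\one=K\one+\langle B,D\one\rangle=0$, so that $R\,\one=\tfrac1\lambda\,\one$; in particular $\tfrac1\lambda$ is an eigenvalue of $R$, so its spectral radius satisfies $r(R)\geq\tfrac1\lambda>0$.

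I would then apply the Krein--Rutman theorem to the compact positive operator $R$ on $L^2(H,\nu)$, whose positive cone is generating and self-dual. It provides $\rho\in L^2(H,\nu)$ with $\rho\geq 0$, $\rho\not\equiv 0$ and $R^*\rho=r(R)\,\rho$, where $R^*=R(\lambda,K_1^*)$. To identify $r(R)$ I pair this identity with $\one$ and use $R\,\one=\tfrac1\lambda\,\one$:
\[
\tfrac1\lambda\,\langle\rho,\one\rangle_{L^2(H,\nu)}=\langle\rho,R\,\one\rangle_{L^2(H,\nu)}=\langle R^*\rho,\one\rangle_{L^2(H,\nu)}=r(R)\,\langle\rho,\one\rangle_{L^2(H,\nu)}.
\]
Since $\rho\geq 0$ and $\rho\not\equiv 0$ we have $\langle\rho,\one\rangle_{L^2(H,\nu)}=\int_H\rho\,d\nu>0$, hence $r(R)=\tfrac1\lambda$. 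Consequently $R(\lambda,K_1^*)\rho=\tfrac1\lambda\,\rho$, which is equivalent to $\rho\in D(K_1^*)$ and $K_1^*\rho=0$; thus $\ker K_1^*$ contains the nonnegative, nonzero function $\rho$, as claimed.

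The point I expect to be the crux is the compactness of $R(\lambda,K_1)$, and it is exactly here that Hypothesis \ref{Hyp1} is used, through the compact Sobolev embedding of Proposition \ref{p1.7}. A second, more subtle point is the identification $r(R)=\tfrac1\lambda$: Krein--Rutman attaches the nonnegative eigenfunction only to the spectral radius, so one must exclude $r(R)>\tfrac1\lambda$, and the pairing above against the strictly positive eigenfunction $\one$ is what achieves this.
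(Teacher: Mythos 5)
Your proof is correct, and it rests on the same two pillars as the paper's — compactness of $R(\lambda,K_1)$ (via the bound $\|v\|_{D(K)}\le C\|f\|_{L^2(H,\nu)}$ and the compact embedding of Proposition \ref{p1.7}, which is exactly where Hypothesis \ref{Hyp1} enters) and positivity preservation (Proposition \ref{p4.8}), together with the observation $K_1\one=0$ — but the mechanism by which you extract a \emph{nonnegative} kernel element is genuinely different. The paper never invokes Krein--Rutman: it first notes that $1/\lambda$ is an eigenvalue of the compact operator $R(\lambda,K_1)$, hence also of its adjoint $R(\lambda,K_1^*)$ (a compact operator and its adjoint share nonzero eigenvalues), which yields some nonzero $\varphi\in\operatorname{Ker}K_1^*$ carrying no sign information; nonnegativity is then recovered by a lattice argument at the semigroup level: since $e^{tK_1^*}$ preserves positivity, $|\varphi|=|e^{tK_1^*}\varphi|\le e^{tK_1^*}|\varphi|$ $\nu$-a.e., and integrating against $\one$ and using $e^{tK_1}\one=\one$ shows this inequality cannot be strict on a set of positive measure, whence $|\varphi|\in\operatorname{Ker}K_1^*$ and one takes $\rho=|\varphi|$. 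Your route instead gets nonnegativity in one stroke from the adjoint-eigenvector form of Krein--Rutman on the self-dual generating cone of $L^2(H,\nu)$, and then pins the eigenvalue down to $1/\lambda$ by pairing with $\one$ — a step you rightly flag as essential, since Krein--Rutman attaches the positive eigenvector only to the spectral radius $r(R)$, and your pairing argument (using $\int_H\rho\,d\nu>0$) is the correct way to rule out $r(R)>1/\lambda$. The trade-off: your argument is shorter but leans on the full strength of the Krein--Rutman theorem; the paper's argument is more self-contained (elementary spectral theory of compact operators plus semigroup positivity) and proves something slightly stronger along the way, namely that $\operatorname{Ker}K_1^*$ is a lattice, closed under taking absolute values — a fact of independent interest when studying uniqueness of the invariant measure constructed in Proposition \ref{p4.10}.
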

\begin{proof}
The  function $\one$ identically equal to $1$ belongs to the domain of $K_1$, and $  K_1\one = 0$. Then for any $\lambda >\lambda_0$, $\one$ is an eigenvector of $R(\lambda, K_1)$ with eigenvalue $1/\lambda$. Since $D(K_1)= D(K)$ is compactly embedded in $L^2(H, \nu)$ by Proposition \ref{p1.7},  then $R(\lambda, K_1)$ is a compact operator, and $1/\lambda $ is an eigenvalue of $R(\lambda, K_1)^* =  R(\lambda, K_1^*)$ too. Hence, 
$0$ is an eigenvalue of $K_1^*$, so that the kernel of $K_1^*$ contains nonzero elements. Note that since $R(\lambda, K_1)$ preserves positivity for large $\lambda$, then $R(\lambda, K_1^*)$ too preserves positivity for large $\lambda$, hence the semigroup $e^{tK_1^*}$ generated by $K_1^*$ preserves positivity for every $t>0$.

 Let us check that the kernel of $K_1^*$ is a lattice, that is, if   $\varphi \in \,$Ker$\,K_1^*$, then 
 $|\varphi|\in \,$Ker$\,K_1^*$.
Assume that $\varphi \in \,$Ker$\,K_1^*$. Then  $\varphi 
 = e^{tK_1^*}\varphi  $ for every $t>0$, and since $e^{tK_1^*}$ preserves positivity, then  
 $$ |\varphi (x)|=|e^{tK_1^*}\varphi (x)|\le (e^{tK_1^*}|\varphi |)(x),\quad  \nu -\;\mbox{a.e.}\; x\in H .
$$
We claim that for every $t>0$,
\begin{equation}
\label{lattice}
|\varphi (x)|= e^{tK_1^*}(|\varphi |)(x),\quad \nu -\;\mbox{a.e.}\; x\in H .
\end{equation}
Assume by  contradiction that there are $t>0$ and a Borel subset $I\subset H$ such that $\nu(I)>0$ and
$|\varphi (x)|< e^{tK_1^*}(|\varphi |)(x)$ for $x\in I$.
Then we have
$$
\int_{H}  |\varphi (x)|\nu (dx)<\int_{H}  (e^{tK_1^*}|\varphi|)(x)\nu (dx).
$$
On the other hand, since $\one \in $ Ker$\,K_1$, then $e^{tK_1^*}\one =\one$. Hence 
$$
\int_{H}e^{tK_1^*}|\varphi|\, d\nu  =\langle e^{tK_1^*}|\varphi| ,\one
 \rangle_{L^2(H,\nu)}=
\langle |\varphi|, e^{tK_1}\one   \rangle_{L^2(H,\nu)} =\int_{H}^{} |\varphi |d\nu ,
$$
which is  a contradiction. Then  \eqref{lattice} holds and it yields $|\varphi|\in $ Ker$\,K_1^*$. 
 \end{proof}

A realization of ${\mathcal K}_1$ in $L^2(H, \rho \nu)$ is m-dissipative, as the next proposition shows.

\begin{Proposition}
\label{p4.10}
Under the assumptions of Proposition \ref{p4.9}, let $\rho  $ be a  nonnegative function belonging to  {\em Ker}$\,K_1^*\setminus \{0\}$. 
Then 
the operator
$${\mathcal D}:= \{u\in D(K_1)\cap L^2(H, \rho \nu): \,K_1u\in L^2(H, \rho \nu)\}  \mapsto L^2(H, \rho \nu), \quad u\mapsto K_1u$$
is dissipative in $L^2(H, \rho \nu)$ and the range of $\lambda I - K_1: {\mathcal D} \mapsto L^2(H, \rho \nu)$ is dense in $L^2(H, \rho \nu)$ for $\lambda >0$. Then  its closure  $\widetilde{K}_1$ generates a contraction semigroup $\widetilde{T}_1(t)$ in $L^2(H, \rho \nu)$, and the measure $\rho \nu$ is invariant for $\widetilde{T}_1(t)$.  
\end{Proposition}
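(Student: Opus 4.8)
The plan is to verify the hypotheses of the Lumer--Phillips theorem for $(K_1,\mathcal D)$ in $L^2(H,\rho\nu)$ and then read off invariance of $\rho\nu$ from the kernel equation $K_1^*\rho=0$. First note that $\rho\nu$ is a finite measure, since $\rho\in L^2(H,\nu)\subset L^1(H,\nu)$. For dissipativity I would use the elementary pointwise identity
$$
\mathcal K_1(u^2)=2u\,\mathcal K_1 u+\|Du\|^2 ,
$$
valid because the first order term $\langle B,Du\rangle$ satisfies $\mathcal B(u^2)=2u\,\mathcal B u$ and so contributes no ``carr\'e du champ''. Integrating against $\rho\,d\nu$ and using that $\rho\in\mathrm{Ker}\,K_1^*$, whence $\int_H K_1 w\,\rho\,d\nu=\langle w,K_1^*\rho\rangle_{L^2(H,\nu)}=0$ for every $w\in D(K_1)$, applied with $w=u^2$, yields
$$
\int_H u\,K_1u\,\rho\,d\nu=-\frac12\int_H\|Du\|^2\,\rho\,d\nu\le 0,
$$
which is precisely the dissipativity of $(K_1,\mathcal D)$ in $L^2(H,\rho\nu)$.

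For the density of the range, by Lumer--Phillips it suffices to make $\mathrm{Ran}(\lambda I-K_1)$ dense in $L^2(H,\rho\nu)$ for a single $\lambda$, and I would take $\lambda>\lambda_0$, with $\lambda_0$ the threshold of Proposition \ref{p3.2} or \ref{p3.4}; density for every $\lambda>0$ then follows a posteriori, once $\widetilde K_1$ is known to generate a contraction semigroup. The semigroup $e^{tK_1}$ on $L^2(H,\nu)$ is Markovian: it preserves positivity by Proposition \ref{p4.8}, and $K_1\one=0$ forces $e^{tK_1}\one=\one$, so $\|e^{tK_1}f\|_\infty\le\|f\|_\infty$ and $\|R(\lambda,K_1)f\|_\infty\le\lambda^{-1}\|f\|_\infty$ for bounded $f$. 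Given $f\in L^2(H,\rho\nu)$, I approximate it in $L^2(H,\rho\nu)$ by its truncations $f_k$, which are bounded; then $u_k:=R(\lambda,K_1)f_k\in D(K)=D(K_1)$ is bounded, hence lies in $L^2(H,\rho\nu)$, and $K_1u_k=\lambda u_k-f_k$ is bounded as well, so $u_k\in\mathcal D$ and $(\lambda I-K_1)u_k=f_k\to f$. The same estimate applied to $\lambda R(\lambda,K_1)g\to g$ as $\lambda\to\infty$ (dominated convergence in $L^2(H,\rho\nu)$) shows that $\mathcal D$ is dense in $L^2(H,\rho\nu)$.

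The Lumer--Phillips theorem then gives that the closure $\widetilde K_1$ of $(K_1,\mathcal D)$ is $m$-dissipative and generates a contraction semigroup $\widetilde T_1(t)$ in $L^2(H,\rho\nu)$. For invariance I note that $\int_H K_1u\,\rho\,d\nu=\langle u,K_1^*\rho\rangle_{L^2(H,\nu)}=0$ for $u\in\mathcal D$; since $\one\in L^2(H,\rho\nu)$ this passes to the closure, giving $\int_H\widetilde K_1 u\,\rho\,d\nu=0$ for all $u\in D(\widetilde K_1)$. Hence, for such $u$,
$$
\frac{d}{dt}\int_H\widetilde T_1(t)u\,\rho\,d\nu=\int_H\widetilde K_1\widetilde T_1(t)u\,\rho\,d\nu=0,
$$
so $\int_H\widetilde T_1(t)u\,\rho\,d\nu=\int_H u\,\rho\,d\nu$, and by density this holds for every $u\in L^2(H,\rho\nu)$; that is, $\rho\nu$ is invariant.

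The delicate point, and the one I expect to be the main obstacle, is the dissipativity step: for $u\in\mathcal D$ the square $u^2$ need not lie in $D(K_1)$, since $u\in W^{2,2}(H,\nu)$ (Theorem \ref{t2.7}) does not by itself guarantee $Du\in L^4(H,\nu;H)$, so $D^2(u^2)=2\,Du\otimes Du+2u\,D^2u$ need not belong to $L^2(H,\nu;\mathcal L_2(H))$. I would circumvent this by applying the kernel identity $\int_H K_1\phi_k(u)\,\rho\,d\nu=0$ to $\phi_k(u)$ with $\phi_k\in C^2_b(\R)$, $\phi_k(r)\to r^2$ and $\phi_k'$ bounded, and passing to the limit; controlling the contribution $\phi_k''(u)\|Du\|^2$ and exploiting that $K_1u\in L^2(H,\rho\nu)$ is built into the definition of $\mathcal D$ is where the genuine work lies.
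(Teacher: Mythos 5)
Your overall architecture (Lumer--Phillips plus the kernel identity for invariance) is the right one, and your range-density, domain-density, and invariance steps are essentially the paper's: boundedness of $R(\lambda,K_1)f$ for bounded Borel $f$ via positivity (Proposition \ref{p4.8}) and $K_1\one=0$, truncation of $f\in L^2(H,\rho\nu)$ (where, as in the paper's footnote, one must pick a Borel representative since $\rho$ may vanish on sets of positive $\nu$-measure), and passage of $\int_H K_1\varphi_n\,\rho\,d\nu=0$ to the closure. The gap is in the dissipativity step, and your proposed repair does not close it. Your argument needs $\int_H K_1 w\,\rho\,d\nu=\langle w,K_1^*\rho\rangle_{L^2(H,\nu)}=0$ with $w=u^2$ or $w=\phi_k(u)$, which is licensed only if $w\in D(K_1)=D(K)$. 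You correctly observe that $u^2\notin D(K_1)$ in general because $Du\in L^2(H,\nu;H)$ need not be in $L^4$; but the $C^2_b$ truncations $\phi_k$ do not fix this, since the obstruction is not the growth of $r\mapsto r^2$ but the carr\'e du champ itself: the chain rule would give $K\phi_k(u)=\phi_k'(u)Ku+\tfrac12\phi_k''(u)\|Du\|^2$, and with $\phi_k''$ merely bounded the term $\phi_k''(u)\|Du\|^2$ is still only in $L^1(H,\nu)$, not $L^2(H,\nu)$, so $\phi_k(u)$ is not known to lie in $D(K_1)$ and the pairing with $K_1^*\rho$ is not justified. Worse, even the formal limit identity $\int_H u\,K_1u\,\rho\,d\nu=-\tfrac12\int_H\|Du\|^2\rho\,d\nu$ presupposes $\int_H\|Du\|^2\rho\,d\nu<\infty$, which is not available a priori: $\rho$ is only in $L^2(H,\nu)$ and $\|Du\|^2$ only in $L^1(H,\nu)$, so the product need not be integrable.

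The paper sidesteps all of this by never applying $K_1^*\rho=0$ to a nonlinear image of $u$: it uses it only against $u=R(\lambda,K_1)f\in D(K_1)$ itself, obtaining $\lambda\int_H u\,\rho\,d\nu=\int_H f\,\rho\,d\nu$, whence via positivity the $L^1$ contraction $\|R(\lambda,K_1)f\|_{L^1(H,\rho\nu)}\le\lambda^{-1}\|f\|_{L^1(H,\rho\nu)}$; combined with the $L^\infty$ bound (which you in fact derived for the range-density step) and Riesz--Thorin interpolation, this gives $\|R(\lambda,K_1)f\|_{L^2(H,\rho\nu)}\le\lambda^{-1}\|f\|_{L^2(H,\rho\nu)}$ for $\lambda>\lambda_0$, and dissipativity of $(K_1,{\mathcal D})$ then follows from $\lambda\|u\|_{L^2(H,\rho\nu)}\le\|\lambda u-K_1u\|_{L^2(H,\rho\nu)}$ by squaring and letting $\lambda\to\infty$. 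Since you already have positivity, the Markov-type $L^\infty$ estimate, and the identity $\lambda\int_H u\,\rho\,d\nu=\int_H f\,\rho\,d\nu$, your proof can be completed exactly along these lines; replace the carr\'e-du-champ computation by the $L^1$--$L^\infty$ interpolation argument and the rest of your proposal stands.
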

\begin{proof} As a first step we prove dissipativity, through estimates on $R(\lambda, K_1)$. 

We remark that Lemma \ref{l1.2} holds for the measure $\rho\nu$ as well, with the same proof. In particular, $C_b(H)$ is dense in $L^1(H, \rho \nu)$. 

Let $\lambda >\lambda_0$ and let  $f\in C_b(H)$. 
Set $u=R(\lambda, K_1)f$. We recall that, since $\rho \in D(K_1^*)$ and 
$K_1^*\rho =0$, then for every $u\in D(K_1)$ we have $\int_H K_1u\,\rho \, d\nu = \int_H u \,K_1^*\rho \,d\nu =0$. 
So, multiplying  both sides of $\lambda u - K_1u = f$ by $\rho$ and integrating we obtain 
$$\int_H \lambda u \,\rho \, d\nu = \int_H f \,\rho \, d\nu .$$
If $f$ has nonnegative values $\nu$-a.e., by Proposition \ref{p4.8} $u$ has nonnegative values $\nu$-a.e., and the above equality  implies
\begin{equation}
\label{e4.11}
\|u\|_{L^1(H, \rho \nu)} \leq \frac{1}{\lambda} \|f\|_{L^1(H, \rho \nu)}.
\end{equation}
In general, we split $f $ as  $f = f^+ - f^-$. Since $u = R(\lambda, K_1)  f^+ - R(\lambda, K_1)f^- = u^+ -u^-$, 
\eqref{e4.11} follows for every $f\in C_b(H)$.  Since $C_b(H)$ is dense in $L^1(H, \rho \nu)$, the resolvent $R(\lambda, K_1)$ may be extended to a bounded operator (still denoted by  $R(\lambda, K_1)$) to $L^1(H, \rho \nu)$, and
\begin{equation}
\label{e4.12}
\|R(\lambda, K_1)f\|_{L^1(H, \rho \nu)} \leq \frac{1}{\lambda} \|f\|_{L^1(H, \rho \nu)}, \quad f\in L^1(H, \rho \nu).
\end{equation}

Let now $f\in L^{\infty}(H, \rho \nu)$. 
$f$ is in fact an equivalence class of functions, that contains a Borel bounded element. Indeed, for each element $\varphi \in f$, setting $\widetilde{f}(x) = \varphi(x)$ if $|\varphi(x)|\leq \|f\|_{L^{\infty}(H, \rho \nu)}$, $\widetilde{f}(x) = 0$ if $|\varphi(x)|>\|f\|_{L^{\infty}(H, \rho \nu)}$, the function $\widetilde{f}$ is Borel and bounded, and $\|f\|_{L^{\infty}(H, \rho \nu)} = \sup_{x\in H}|\widetilde{f}(x)|$. 
 
Let us go back to the resolvent equation, $\lambda u - K_1u = \widetilde{f}$. Since $\widetilde{f}$ is Borel and bounded, it can be seen  as an element of $L^{\infty}(H,  \nu)$, identifying it with its equivalence class\footnote{
Note that  $\rho$ may vanish on some set with positive measure, so that $f$ does not belong necessarily to $L^{\infty}(H,  \nu)$, and even it does, its $L^{\infty}(H,  \nu)$ norm may be bigger than its $L^{\infty}(H, \rho \nu)$ norm.}.
Moreover, $\|\widetilde{f}\|_{L^{\infty}(H,  \nu)} = \sup_{x\in H}|\widetilde{f}(x)| = \|\widetilde{f}\|_{L^{\infty}(H, \rho \nu)} $.

Since $ \sup |\widetilde{f}| - \widetilde{f}(x) \geq 0 $ for every $x$, still by Proposition \ref{p4.8} we have 
$R(\lambda, K_1)( \sup |\widetilde{f}| - \widetilde{f}) =  \sup |\widetilde{f}| /\lambda - u\geq 0$, $\nu$-a.e. Similarly, since $\widetilde{f}(x) +  \sup |\widetilde{f}|  \geq 0 $ for every $x$, then $ u + \sup |\widetilde{f}| /\lambda \geq 0$, $\nu$-a.e. 
So, we get an $L^{\infty}$  estimate, $\|u\|_{L^{\infty}(H,  \nu)} \leq  \sup |\widetilde{f}| /\lambda $.
Hence 
\begin{equation}
\label{e4.13}
\|R(\lambda, K_1)f\|_{L^{\infty}(H, \rho \nu)} \leq \|R(\lambda, K_1)\widetilde{f}\|_{L^{\infty}(H,  \nu)}\leq 
\frac{1}{\lambda}\|f\|_{L^{\infty}(H, \rho \nu)}, \quad f\in  L^{\infty}(H, \rho \nu).
\end{equation}
By interpolation, $R(\lambda, K_1)$ may be extended to $L^{2}(H, \rho  \nu)$ [and, in fact, to all spaces $L^{p}(H, \rho  \nu)$], in such a way that the norm of the extension does not exceed $1/\lambda$. In particular,
\begin{equation}
\label{e4.14}
\|R(\lambda, K_1)f\|_{L^{2}(H, \rho \nu)} \leq \frac{1}{\lambda} \|f\|_{L^{2}(H, \rho  \nu)}, \quad f\in L^{2}(H, \rho \nu)
\cap L^2(H, \nu).
\end{equation}
Let now $u\in {\mathcal D}$. For $\lambda >\lambda_0$ estimate \eqref{e4.14} gives
$$\lambda \|u\|_{L^{2}(H, \rho \nu)} \leq   \|\lambda u - K_1 u\|_{L^{2}(H, \rho  \nu)}$$
and squaring the norms of both sides, we obtain
$$  \langle u, K_1u\rangle_{L^{2}(H, \rho  \nu)}\leq \frac{1}{2\lambda}\| K_1 u\|_{L^{2}(H, \rho  \nu)}^2. $$
Letting $\lambda \to \infty$ yields $  \langle u, K_1u\rangle_{L^{2}(H, \rho  \nu)}\leq 0$, namely the restriction of $K_1$ to ${\mathcal D}$ is dissipative in $L^{2}(H, \rho  \nu)$. 

We remark that  ${\mathcal D}$ is dense in $L^2(H, \rho \nu)$ since it contains ${\mathcal F}{\mathcal C}^{\infty}_b(H)$ which is dense by the extension of Lemma \ref{l1.2} to $L^2(H, \rho \nu)$. Moreover  $(\lambda I -K_1)({\mathcal D})$ is dense for $\lambda >\omega_0$, since it contains ${\mathcal F}{\mathcal C}^{\infty}_b(H)$. 
Indeed, if $f\in {\mathcal F}{\mathcal C}^{\infty}_b(H)$, then  $u= R(\lambda, K_1)f $ belongs to ${\mathcal D}$ and $\lambda u - K_1u =f$. 

Let us denote by $\widetilde{K}_1:D(\widetilde{K}_1)\mapsto L^2(H, \rho \nu)$ the closure of $K_1: {\mathcal D}   \mapsto L^2(H, \rho \nu)$. 
By the Lumer--Phillips Theorem, $\widetilde{K}_1$ generates a strongly continuous contraction semigroup in $L^2(H, \rho \nu)$, and ${\mathcal D}$ is a core for $\widetilde{K}_1$. 
So, for every $\varphi\in D(\widetilde{K}_1)$ there is a sequence of functions $\varphi_n\in {\mathcal D}$ such that $\varphi_n\to \varphi $ and $K_1 \varphi_n \to \widetilde{K}_1\varphi$ in $L^2(H, \rho \nu)$. For every $n$ we have 
$$\int_H K_1 \varphi_n \,\rho \,d\nu = \int_H  \varphi_n \,K_1^*\rho \,d\nu =0$$
and letting $n\to \infty$ we obtain $\int_H \widetilde{K}_1 \varphi  \,\rho \,d\nu =0$. This proves the last statement. 
\end{proof}

\section{Kolmogorov equations of stochastic  reaction-diffusion equations.}

Let $H=L^2((0,1), d\xi)$, and let $A$ be the realization of the second order derivative with Dirichlet boundary condition, that is $D(A)=W^{2,2}((0, \pi), d\xi) \cap W^{1,2}_0((0, \pi), d\xi)$, $Ax = x''$. 

We consider  the Gaussian measure $\mu $ in $H$ with mean $0$ and covariance $Q := -\frac12\,A^{-1}$. 
A canonical orthonormal basis of $H$ consists of the functions $e_k(\xi) := \sqrt{2}\sin(k \pi \xi)$, $k\in \N$, that are eigenfunctions of $Q$ with eigenvalues $\lambda_k := 1/(2k^2 \pi^2)$.

Let $\Phi:\R\mapsto \R$ be any convex lowerly bounded function, with (at most) polynomial growth at infinity, say 
\begin{equation}
\label{e4.1}
 |\Phi(t)| \leq C(1+ |t|^{p_1}), \quad  t\in \R, 
 \end{equation}
for some $C>0$, $p_1 \geq 2$. We set
\begin{equation}
\label{e4.2}
U(x)= \left\{ \begin{array}{lll}  & \ds \int_0^{1} \Phi(x(\xi))d\xi, & x\in L^{p_1}(0,1), 
\\
\\
 & +\infty , & x \notin L^{p_1}(0,1). 
\end{array}\right. 
\end{equation}

\noindent  Section  5.1  is devoted to  check that $U$ satisfies
Hypotheses \ref{Hyp} and \ref{Hyp1}, so that we can apply Theorem \ref{t2.7} to obtain regularity results for the solution $u$ to \eqref{e1.1}.
 Then in Section 5.2 we show that under an additional assumption $u$ fulfills \eqref{e2.19} too.

\subsection{Checking Hypotheses \ref{Hyp} and \ref{Hyp1}}

We first note that $U$ is finite $\mu$--a.e., thanks to the next lemma.  Its statement should be well known; however, we write down a simple proof for the reader's convenience. 

\begin{Lemma}
\label{l4.1}
For every $p\geq 2$ we have
\begin{equation}
\label{e4.3}
\int_H \int_0^1 | x(\xi)|^{p} d\xi\, d\mu < \infty ,
\end{equation}
and hence $\mu(L^p(0,1)) =1$. Moreover, $x\mapsto \|x\|_{L^p(0,1)} \in L^q(H, \mu)$ for every $q\geq 1$. 
\end{Lemma}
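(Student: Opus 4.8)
The plan is to use Tonelli's theorem to interchange the order of integration and reduce \eqref{e4.3} to a one-dimensional Gaussian moment computation. Since the integrand is nonnegative,
$$\int_H \int_0^1 |x(\xi)|^p\,d\xi\,d\mu = \int_0^1 \left(\int_H |x(\xi)|^p\,d\mu\right) d\xi,$$
so the crux is to control, for each fixed $\xi$, the inner integral. For fixed $\xi$, under $\mu$ the evaluation $x\mapsto x(\xi) = \sum_k x_k e_k(\xi)$ is a centered real Gaussian whose variance is the diagonal of the covariance kernel,
$$\sigma^2(\xi) = \sum_{k=1}^\infty \lambda_k e_k(\xi)^2 = \sum_{k=1}^\infty \frac{\sin^2(k\pi\xi)}{k^2\pi^2} = \tfrac12\,\xi(1-\xi),$$
the last identity being the value on the diagonal of the Green's function of $-d^2/d\xi^2$ with Dirichlet boundary conditions (equivalently, a direct summation of the Fourier series). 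In particular $\sigma^2(\xi)\le 1/8$ uniformly in $\xi$.

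Using the absolute moments of a centered Gaussian, $\int_H |x(\xi)|^p\,d\mu = c_p\,\sigma(\xi)^p$ with $c_p := \E|N|^p<\infty$ for a standard normal $N$, and integrating over $\xi$ yields
$$\int_H\int_0^1 |x(\xi)|^p\,d\xi\,d\mu = c_p\int_0^1 \sigma(\xi)^p\,d\xi \le c_p\,8^{-p/2}<\infty,$$
which is \eqref{e4.3}. Finiteness of the double integral forces $\int_0^1|x(\xi)|^p\,d\xi<\infty$ for $\mu$-a.e. $x$, i.e. $\mu(L^p(0,1))=1$.

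For the last assertion I would argue by two elementary reductions, both exploiting that $d\xi$ on $(0,1)$ and $\mu$ on $H$ are probability measures. If $q\ge p$, then $\|x\|_{L^p(0,1)}\le \|x\|_{L^q(0,1)}$, so $\|x\|_{L^p}^q \le \int_0^1|x(\xi)|^q\,d\xi$, and the estimate above with $q$ in place of $p$ (legitimate since $q\ge p\ge2$) gives $\int_H\|x\|_{L^p}^q\,d\mu<\infty$. If instead $1\le q<p$, then by Jensen's inequality (equivalently the inclusion $L^p(H,\mu)\subset L^q(H,\mu)$) we have $\int_H\|x\|_{L^p}^q\,d\mu \le \big(\int_H\|x\|_{L^p}^p\,d\mu\big)^{q/p}$, which is finite by \eqref{e4.3}. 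This covers all $q\ge1$.

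The main obstacle is making the pointwise evaluation $x\mapsto x(\xi)$ rigorous, since elements of $H$ are $L^2$-equivalence classes for which $x(\xi)$ is not defined a priori. I would circumvent this by working with the truncations $x_n := P_n x = \sum_{k\le n} x_k e_k$, which are genuine continuous functions, jointly measurable in $(x,\xi)$: for these the Gaussian computation is literal, giving $\int_H\int_0^1|x_n(\xi)|^p\,d\xi\,d\mu = c_p\int_0^1\sigma_n(\xi)^p\,d\xi \le c_p\,8^{-p/2}$ uniformly in $n$, where $\sigma_n^2(\xi)=\sum_{k\le n}\lambda_k e_k(\xi)^2\uparrow\sigma^2(\xi)$. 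Since $\E\|x-P_nx\|^2=\sum_{k>n}\lambda_k\to0$, we have $x_n\to x$ in $L^2(H\times(0,1),\mu\otimes d\xi)$, so a subsequence converges $\mu\otimes d\xi$-a.e., and Fatou's lemma transfers the uniform bound to the limit, yielding \eqref{e4.3} with the correct interpretation of $\int_0^1|x(\xi)|^p\,d\xi$.
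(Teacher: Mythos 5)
Your proof is correct and follows essentially the same route as the paper's: both reduce \eqref{e4.3} to one-dimensional Gaussian moment computations for the evaluations via the finite-dimensional projections $P_nx$, and then pass to the limit (the paper by showing $(x,\xi)\mapsto P_nx(\xi)$ is Cauchy in $L^p(H\times(0,1),\mu\times d\xi)$ and identifying the limit at $p=2$; you by a uniform bound plus Fatou along an a.e.\ convergent subsequence), finishing with the same H\"older/Jensen reductions for general $q\geq 1$. Your exact identity $\sigma^2(\xi)=\tfrac12\,\xi(1-\xi)$ from the Green's function diagonal is a sharper substitute for the paper's cruder bound $\sum_k\lambda_k e_k(\xi)^2\leq 2\sum_k\lambda_k$ obtained from $\|e_k\|_\infty=\sqrt2$, but this refinement is inessential to the argument.
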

\begin{proof}
Let  $P_n$ be the orthogonal projection on the subspace spanned by $e_1, \ldots, e_n$. 
For every $\xi\in (0, 1)$ and $m < n \in \N$, 
the function  $x\mapsto P_nx (\xi)- P_mx(\xi)$ is a Gaussian random variable $N_{0, \sum_{k=m+1}^n\lambda_ke_k(\xi)^2}$. Then, for $p\geq 1$, 
$$\int_H |P_nx (\xi) - P_mx(\xi)|^{p} d\mu = \int_{\R}|\eta| ^{p}N_{0, \sum_{k=m+1}^n \lambda_k e_k(\xi)^2}(d\eta) $$
$$= c_p 
 (\sum_{k=m+1}^n \lambda_k e_k(\xi)^2)^{p/2} \leq \tilde{c}_p  (\sum_{k=m+1}^n \lambda_k )^{p/2}, $$
with $\tilde{c}_p= 2^{p/2}c_p$,  so that 
$$\int_H \int_0^1 |P_nx (\xi) - P_mx(\xi)|^{p} d\xi\, d\mu = \int_0^1 \int_H |P_nx (\xi) - P_mx(\xi)|^{p}  d\mu \, d\xi \leq \tilde{c}_p  (\sum_{k=m+1}^n \lambda_k )^{p/2}. $$
This implies  that the sequence $(x,\xi)\mapsto P_nx(\xi)$ converges in $L^p(H\times (0,1), \mu\times d\xi)$ to a limit function $u$ that belongs to $L^p(H\times (0,1), \mu\times d\xi)$ for every $p$. Let us show that $u(x,\xi) = x(\xi)$ taking $p=2$: indeed, $\int_0^1 |P_nx (\xi) -  x(\xi)|^{2} d\xi $ vanishes for every $x\in H$ as $n\to \infty$, and it is bounded by $ \|x\|^2$ which belongs to $L^1(H, \mu)$, so that by the dominated convergence theorem, $\int_H \int_0^1 |P_nx (\xi) -  x(\xi)|^{2} d\xi\, d\mu$ vanishes as $n\to \infty$. Then  $u(x,\xi) = x(\xi)$, and \eqref{e4.3} follows. 
It implies that $\mu(L^p(H, \mu)) =1$ for every $p\geq 2$ and that $x\mapsto \|x\|_{L^p(0,1)} \in L^p(H, \mu)$. For $q>p$ and $x\in L^q(0,1)$ the H\"older inequality yields $\|x\|_{L^p(0,1)} \leq \|x\|_{L^q(0,1)}$ so that $x\mapsto \|x\|_{L^p(0,1)}   \in L^q(H, \mu)$. 
\end{proof}

The function $U$ defined by \eqref{e4.2}  is convex and bounded from below because $\Phi$ is. Using the Fatou Lemma, it is easily seen to be lowerly semicontinuous. By assumption \eqref{e4.1} and Lemma \ref{l4.1},  $U\in L^p(H, \mu)$ for every $p\geq 1$, and the measures $\mu$ and $\nu = e^{-2U}\mu/ \int_He^{-2U}d\mu$ are equivalent. For $U$ belong to some Sobolev space it is sufficient that also $\Phi'$ has at most polynomial growth, as the next proposition shows.

\begin{Proposition}
\label{p4.2}
Let $\Phi:\R\mapsto \R$ be any $C^1$ convex lowerly bounded function such that 
\begin{equation}
\label{e4.4}
 |\Phi'(t)| \leq C(1+ |t|^{p_2}), \quad  t\in \R, 
 \end{equation}
for some $C>0$, $p_2\geq 1$. Then the function $U$ defined in \eqref{e4.2} belongs to $W^{1,p}_0(H, \mu)$ for every $p\geq 1$, and $DU(x) = \Phi '\circ x$ for a.e. $x\in H$ [namely, for each  $x\in L^{2p_2}(0,1)$]. 
\end{Proposition}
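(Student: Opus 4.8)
The plan is to exhibit the gradient $G(x):=\Phi'\circ x$ as the $L^p(H,\mu;H)$-limit of the gradients of the cylindrical approximations $U_n:=U\circ P_n$, and then to invoke the closedness of the operator $D$ of Definition \ref{d1.4} to conclude that $U\in W^{1,p}_0(H,\mu)$ with $DU=G$. First I would check that the candidate gradient is well defined and has the right integrability: for $x\in L^{2p_2}(0,1)$ the function $\xi\mapsto\Phi'(x(\xi))$ lies in $H=L^2(0,1)$, since by \eqref{e4.4}
$$\|\Phi'\circ x\|_H^2=\int_0^1|\Phi'(x(\xi))|^2\,d\xi\le 2C^2\big(1+\|x\|_{L^{2p_2}(0,1)}^{2p_2}\big).$$
By Lemma \ref{l4.1}, $\mu$-a.e.\ $x$ lies in $L^{2p_2}(0,1)$ and $x\mapsto\|x\|_{L^{2p_2}(0,1)}\in L^q(H,\mu)$ for every $q$; hence $G(x)=\Phi'\circ x$ is defined $\mu$-a.e.\ and $x\mapsto\|G(x)\|_H\in L^q(H,\mu)$ for every $q\ge1$.

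For the approximants, each $U_n(x)=\int_0^1\Phi\big((P_nx)(\xi)\big)\,d\xi$ is cylindrical, depending only on $x_1,\dots,x_n$, and is $C^1$ with $U_n$ and $\nabla U_n$ of polynomial growth because of \eqref{e4.1} and \eqref{e4.4}. Differentiating under the integral sign gives $D_jU_n(x)=\langle\Phi'\circ P_nx,e_j\rangle_H$ for $j\le n$ and $0$ otherwise, that is
$$DU_n(x)=P_n\big(\Phi'\circ P_nx\big).$$
Since a $C^1$ function of polynomial growth, together with its gradient, lies in $W^{1,p}(\R^n,\mathcal N_{0,Q_n})$ (truncate by a smooth cutoff $\chi(\cdot/R)$ and let $R\to\infty$, the Gaussian measure having finite moments of every order), each $U_n\in W^{1,p}_0(H,\mu)$ for every $p\ge1$, with $\bar D U_n=DU_n$.

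It then remains to prove $U_n\to U$ in $L^p(H,\mu)$ and $DU_n\to G$ in $L^p(H,\mu;H)$. For the gradient I write $DU_n-G=P_n(\Phi'\circ P_nx-\Phi'\circ x)+(P_n-I)(\Phi'\circ x)$; the second term tends to $0$ in $H$ for each $x$ and is dominated by $\|\Phi'\circ x\|_H\in L^p(H,\mu)$, so it vanishes in $L^p(H,\mu;H)$ by dominated convergence, while the first is bounded by $\|\Phi'\circ P_nx-\Phi'\circ x\|_H$. By Lemma \ref{l4.1}, $P_nx(\xi)\to x(\xi)$ in $L^2(H\times(0,1),\mu\times d\xi)$, so along a subsequence $P_nx(\xi)\to x(\xi)$ for a.e.\ $(x,\xi)$, and the continuity of $\Phi'$ gives $\Phi'(P_nx(\xi))\to\Phi'(x(\xi))$ a.e. To upgrade this to convergence of the superpositions I would use the $n$-independent moment bounds $\sup_n\int_H\|P_nx\|_{L^{2p_2}(0,1)}^r\,d\mu<\infty$ for every $r$, which follow from Minkowski's integral inequality together with $\int_H|P_nx(\xi)|^r\,d\mu=c_r\big(\sum_{k\le n}\lambda_k e_k(\xi)^2\big)^{r/2}$ and the uniform bounds $e_k(\xi)^2\le2$, $\sum_k\lambda_k=\Tr Q<\infty$. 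A Vitali argument then yields $\int_H\|\Phi'\circ P_nx-\Phi'\circ x\|_H^p\,d\mu\to0$. The convergence $U_n\to U$ is obtained by the same scheme, now using the growth \eqref{e4.1} of $\Phi$ and $|U_n(x)-U(x)|\le\int_0^1|\Phi(P_nx(\xi))-\Phi(x(\xi))|\,d\xi$. Since each $U_n\in W^{1,p}_0(H,\mu)$ and both $U_n\to U$ and $\bar D U_n\to G$ in the respective $L^p$-spaces, the closedness of $D$ gives $U\in W^{1,p}_0(H,\mu)$ and $DU=G=\Phi'\circ x$.

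The main obstacle is precisely this last passage to the limit: upgrading the $L^2(H\times(0,1))$ convergence $P_nx\to x$ to $L^p$-convergence of the \emph{nonlinear} superpositions $\Phi'\circ P_nx$ and $\Phi\circ P_nx$, uniformly in $n$. This is where the polynomial growth of $\Phi$ and $\Phi'$ must be balanced against uniform integrability, and the key observation that makes the Vitali argument work without any $L^r$-boundedness of the partial-sum projections $P_n$ is that $e_k(\xi)^2\le2$ and $\Tr Q<\infty$ produce $n$-independent bounds on all moments of $\|P_nx\|_{L^r(0,1)}$.
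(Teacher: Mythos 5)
Your proof is correct, but it takes a genuinely different route from the paper's. The paper approximates $U$ by its Moreau--Yosida regularizations $U_\alpha$ and leans heavily on convexity: each $U_\alpha$ is $C^1$ with Lipschitz gradient (hence in $W^{1,p}_0(H,\mu)$), the subdifferential $\partial U(x)$ is identified as the singleton $\{\Phi'\circ x\}$ for $x\in L^{2p_2}(0,1)$ by citing Barbu, and the standard Moreau--Yosida estimate \eqref{e4.5} gives pointwise convergence $DU_\alpha(x)\to D_0U(x)=\Phi'\circ x$ together with the domination $\|DU_\alpha(x)\|\le\|\Phi'\circ x\|\in L^p(H,\mu)$, so that dominated convergence finishes the argument immediately. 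You instead use the Galerkin-type approximations $U_n=U\circ P_n$, compute $DU_n=P_n(\Phi'\circ P_nx)$ explicitly, and replace the monotone domination by uniform-in-$n$ moment bounds on $\|P_nx\|_{L^r(0,1)}$ plus a Vitali (uniform integrability) argument; convexity of $\Phi$ plays no role at all in your convergence proof, only the growth bound \eqref{e4.4} and continuity of $\Phi'$. What each approach buys: the paper's route is shorter once the Moreau--Yosida machinery is granted, fits the structure of the rest of the paper (where the $U_\alpha$ are the central approximation tool, e.g.\ in Theorem \ref{t2.7} and Propositions \ref{p4.4}, \ref{p4.5}), and gets the $L^p$ domination for free; your route is more elementary and self-contained (no citation of Barbu, no Moreau--Yosida theory), proves strong $L^p$ convergence of the gradients of the cylindrical approximants, and extends verbatim to non-convex $\Phi$ with the same growth bounds. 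It is in fact the same scheme the paper itself adopts later, in Proposition \ref{p5.4} for the Cahn--Hilliard potential, except that there the paper only establishes boundedness of $(U_n)$ in the Sobolev space and invokes a compactness-type lemma from Bogachev, whereas you prove actual convergence and conclude by closedness of $D$. Two small points of hygiene: the $L^p(H\times(0,1),\mu\times d\xi)$ convergence of $(x,\xi)\mapsto P_nx(\xi)$ to $x(\xi)$ is established inside the proof of Lemma \ref{l4.1} rather than in its statement, so you should say you are reproducing that argument; and since your a.e.\ convergence of $\Phi'(P_nx(\xi))$ is only along subsequences, the Vitali step should be phrased via convergence in measure (or a subsequence-of-subsequence argument) to recover convergence of the full sequence.
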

\begin{proof}
By \eqref{e4.4}, $\Phi$ satisfies \eqref{e4.1} with $p_1=p_2+1$, so that $U\in L^p(H, \mu)$ for every $p$ by Lemma \ref{l4.1}. To prove that  $U  \in W^{1,p}_{0}(H,\mu)$ we
shall approach $U$ by its Moreau--Yosida approximations $U_{\alpha}$ defined in \eqref{e1.8}. 
Each $U_{\alpha}$ is continuously differentiable and  $DU_{\alpha}$ is Lipschitz continuous, hence $U_{\alpha} \in W^{1,p}_{0}(H,\mu)$ for every $p$. This can be easily proved arguing as in the case $p=2$ of \cite[Proposition 10.11]{Beppe}. 

Since $U_{\alpha}(x)$ converges monotonically to $U(x)$ at each $x$ such that $U(x) <\infty$, by Lemma \ref{l4.1} $U_{\alpha}$ converges to $U$, $\mu$- a.e. Since 
$$\inf U\leq U_{\alpha}(x) \leq U(x) \leq C(1+ \int_0^1|x(\xi)|^{p_1} d\xi)\leq C(1+ (\int_0^1|x(\xi)|^{p_1p} d\xi)^{1/p}),$$
by Lemma \ref{l4.1} and the dominated convergence theorem,  $U_{\alpha}\to U$ in $L^p(H, \mu)$. 

Let $x\in L^{2p_2}(0, 1)$. Then the  subdifferential $\partial U(x)$ is not empty. Indeed, since $\Phi$ is convex,  for each $y\in H$ we have 
\begin{equation}
\label{e4.6}
U(y) - U(x) = \int_0^{\pi} [\Phi (x(\xi)) - \Phi (y(\xi)) ]\,d\xi \geq \int_0^{\pi} \Phi '(x(\xi))(x(\xi) - y(\xi))d\xi,  
\end{equation}
which implies that the function $\Phi '\circ x \in H$ belongs to $\partial U(x)$. In fact, $\Phi '\circ x \in H$ is the unique element of $\partial U(x)$; see, for example, \cite[Prop. 2.5]{Barbu}. 
By  Lemma \ref{l4.1}, $x\mapsto \|\Phi '\circ x\| \in L^p(H, \mu)$, and again by the dominated convergence theorem 
$\int_H  \|DU_{\alpha}(x) - \Phi '\circ x\|^pd\mu \to 0$ as $\alpha \to 0$, which shows that $U\in W^{1,p}_{0}(H,\mu)$ and $DU(x)= \Phi '\circ x$, $\mu$-a.e. \end{proof}

If the assumptions of Proposition \ref{p4.2} hold, then
 $U$ satisfies Hypothesis \ref{Hyp} and \ref{Hyp1}, and consequently  the results of Theorem \ref{t2.7} and of Propositions \ref{p4.9} and \ref{p4.10} hold.

\subsection{Further estimates of $Du$} 
 
 We are going to show that for every $\lambda >0$ and $f\in L^2(H, \nu)$, the solution of \eqref{e1.1} satisfies estimate \eqref{e2.19}  as well, under reasonable additional assumptions on $\Phi$. We use  the following preliminary result.

\begin{Proposition}
\label{p4.3}
Let  $g\in C^2( \R)$ be such that 
\begin{equation}
\label{e4.7}
|g''(t)|\leq C(1 + |t|^{m}), \quad t\in \R .
\end{equation}
for some $C>0$, $m\geq 1$. Then the function  $F(x):= g\circ x$ belongs to $W^{1,q}_{1/2}(H, \mu; H)$ for all  $q>1$. If in addition $g_{\alpha}:\R\mapsto \R$ are  $C^2$ functions fulfilling  \eqref{e4.7}  with  constant $C$ independent of $\alpha >0$ and  $g_{\alpha}$, $g_{\alpha}'$ pointwise converge to $g$, $g'$ respectively as $\alpha \to 0^+$, then
$F_\alpha(x):=g_{\alpha}\circ x $ converges to $F$ in $W^{1,q}_{1/2}(H, \mu; H)$ as $\alpha\to 0^+$ for all  $q>1$. 
 \end{Proposition}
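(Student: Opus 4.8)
The plan is to identify the derivative of $F$ explicitly and to collapse the whole $\mathcal{L}_2(H)$-valued computation to a pointwise estimate in $\xi\in(0,1)$. Since $F(x)(\xi)=g(x(\xi))$, the natural candidate for the derivative of $F$ at $x$ is multiplication by $g'(x(\cdot))$, so that, writing $F_j(x)=\langle F(x),e_j\rangle$, one has $D_kF_j(x)=\int_0^1 g'(x(\xi))\,e_k(\xi)e_j(\xi)\,d\xi$. First I would compute the Hilbert--Schmidt norm of $Q^{1/2}DF(x)$. Using $\|Q^{1/2}DF(x)\|_{\mathcal{L}_2(H)}^2=\sum_k\lambda_k\sum_j (D_kF_j(x))^2$ and Parseval's identity in the index $j$ (the numbers $\int_0^1 g'(x(\xi))e_k(\xi)e_j(\xi)\,d\xi$ are the Fourier coefficients of $g'(x(\cdot))e_k$), the sum over $j$ collapses to $\int_0^1 g'(x(\xi))^2e_k(\xi)^2\,d\xi$, whence
\[
\|Q^{1/2}DF(x)\|_{\mathcal{L}_2(H)}^2=\int_0^1 g'(x(\xi))^2\Big(\sum_{k=1}^\infty \lambda_k e_k(\xi)^2\Big)d\xi .
\]
The decisive point is that $\sum_k\lambda_k e_k(\xi)^2$ is exactly the diagonal of the covariance kernel, namely $\sum_k\lambda_k e_k(\xi)^2=\tfrac12\xi(1-\xi)\le \tfrac18$ (the Green function of $-\tfrac12A^{-1}$ on the diagonal). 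This bound decouples the infinite-dimensional structure and gives the clean pointwise estimate $\|Q^{1/2}DF(x)\|_{\mathcal{L}_2(H)}^2\le \tfrac18\int_0^1 g'(x(\xi))^2\,d\xi$.

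Next I would exploit the growth hypothesis \eqref{e4.7}. Integrating $|g''(t)|\le C(1+|t|^m)$ twice gives $|g'(t)|\le C_1(1+|t|^{m+1})$ and $|g(t)|\le C_2(1+|t|^{m+2})$. Hence $\int_0^1 g(x(\xi))^2\,d\xi$ and $\int_0^1 g'(x(\xi))^2\,d\xi$ are dominated by $1+\|x\|_{L^{2(m+2)}(0,1)}^{2(m+2)}$ and $1+\|x\|_{L^{2(m+1)}(0,1)}^{2(m+1)}$; raising to the power $q/2$ and integrating against $\mu$, Lemma \ref{l4.1} (all moments of $\|x\|_{L^p(0,1)}$ are finite for $p\ge 2$) yields $\int_H\|F\|_H^q\,d\mu<\infty$ and $\int_H\|Q^{1/2}DF\|_{\mathcal{L}_2(H)}^q\,d\mu<\infty$. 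To upgrade these a priori bounds to genuine membership I would approximate: for $g$ with bounded $g'$ the functions $g\circ P_n x$ furnish finite-dimensional approximations whose ranges are truncated and cut off to be bounded cylindrical $\mathcal{L}_2(H)$-valued functions, and whose components and $Q^{1/2}$-gradients converge by dominated convergence, so $F\in W^{1,q}_{1/2}(H,\mu;H)$ with the derivative found above. A general $g$ satisfying \eqref{e4.7} is then reached by cutting off $g''$, e.g. replacing $g''$ by $g''\zeta(\cdot/n)$ with $\zeta$ a smooth cutoff, which preserves \eqref{e4.7} with the same $C$; the approximations form a Cauchy sequence by the difference version of the identity above, and the closedness of $W^{1,q}_{1/2}(H,\mu;H)$ together with the closability established in Section 2.1 gives $F\in W^{1,q}_{1/2}(H,\mu;H)$.

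Finally, the convergence statement follows from the very same identity applied to the difference $F_\alpha-F$: since $g_\alpha,g_\alpha'$ converge pointwise to $g,g'$ with $|g_\alpha''|\le C(1+|t|^m)$ uniformly, one has
\[
\|Q^{1/2}D(F_\alpha-F)(x)\|_{\mathcal{L}_2(H)}^2\le \tfrac18\int_0^1\big(g_\alpha'(x(\xi))-g'(x(\xi))\big)^2\,d\xi ,
\]
and a parallel bound for $\|F_\alpha(x)-F(x)\|_H^2$ in terms of $g_\alpha-g$. Both right-hand sides tend to $0$ pointwise in $x$ and, by the uniform growth bounds, are dominated by fixed $\mu$-integrable functions supplied by Lemma \ref{l4.1}; the dominated convergence theorem then gives convergence of the respective $L^q$ norms, i.e. $F_\alpha\to F$ in $W^{1,q}_{1/2}(H,\mu;H)$.

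The step I expect to be the main obstacle is not the identity, which is a short Parseval computation, but the passage from the a priori estimates to actual membership: one must construct bounded cylindrical $\mathcal{L}_2(H)$-valued approximations in the $H$-valued setting and verify simultaneous convergence of the functions and of their $Q^{1/2}$-gradients, so that closability can be invoked. The enabling observation throughout is the pointwise bound $\sum_k\lambda_k e_k(\xi)^2\le 1/8$, which is special to this choice of $A$ and $Q$ and is precisely what reduces everything to one-dimensional estimates on $g$ and $g'$.
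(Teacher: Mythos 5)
Your proposal is correct, and it reaches the conclusion by a route that differs from the paper's in both of its key mechanisms, so a comparison is worthwhile. For the quantitative estimate, the paper does not use Parseval at all: it bounds each entry crudely, $|\int_0^1 g'(x(\xi))e_h(\xi)e_k(\xi)\,d\xi|\le 2M(1+\|x\|_{L^{m+1}}^{m+1})$ (using only $\|e_k\|_\infty=\sqrt 2$), and then sums against the weights $\lambda_h\lambda_k$, paying a factor $(\mathrm{Tr}\,Q)^q$; your collapse of the target-index sum by Parseval together with the identity $\sum_k\lambda_k e_k(\xi)^2=\tfrac12\xi(1-\xi)\le\tfrac18$ is sharper and cleaner, and your dominated-convergence argument for $F_\alpha\to F$ is then essentially the paper's argument with better constants (both need, and you implicitly use, that the uniform bound on $g_\alpha''$ plus pointwise convergence at one point gives growth bounds on $g_\alpha,g_\alpha'$ uniform in $\alpha$). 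Note also that you estimate the one-weight norm $\sum_{k,j}\lambda_k(D_kF_j)^2$, whereas what the paper verifies is the Cameron--Martin Hilbert--Schmidt quantity $\sum_{h,k}\lambda_h\lambda_k\langle\partial F/\partial e_h,e_k\rangle^2$; since $\lambda_k\le\lambda_1$, your bound implies theirs, so you prove (at least) as much. The genuine divergence is in the membership step: the paper performs no approximation whatsoever, but instead proves directly that $F$ is Gateaux differentiable along every $h\in H^1_0(0,1)$ -- the decisive point being $\|h\|_\infty<\infty$, which tames the polynomial growth of $g''$ in the difference quotient -- and then invokes Bogachev's characterization (membership in $G^{q,1}$ plus integrability of the Cameron--Martin Hilbert--Schmidt norm implies membership in the Sobolev space) to conclude. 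You instead propose cylindrical approximation plus closability, which is actually more faithful to the paper's own definition of the spaces in Section 2, at the price of having to carry out the truncation details yourself: projecting values onto the span of $e_1,\dots,e_N$, cutting off to get bounded cylindrical coefficients, and checking gradient convergence -- for which it matters that your cutoff approximants $g_n$ (with $g_n''=g''\zeta(\cdot/n)$) have \emph{bounded} second derivative, since that is what makes $\int_0^1(g_n'((P_mx)(\xi))-g_n'(x(\xi)))^2\,d\xi\to 0$ for every $x$ rather than only along subsequences. So the trade is: the paper's route requires a differentiability lemma and an external theorem; yours requires neither, but the step you flagged as the main obstacle is precisely the work that the citation to Bogachev replaces, and it is routine but not free.
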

\begin{proof}
As first step we show that  for each  $x\in L^{2m}(0,1)$ (hence,  $\mu$-a.e.), $F$ is differentiable in any direction $h\in Q^{1/2}(H) = H^1_0(0,1)$ and that  $\frac{\partial F(x)}{\partial h} = g'\circ x \cdot h $. 
We have in fact for all  $h\in H^1_0(0,1)$, $\xi \in (0, 1)$ and all  $0< |t|\leq 1$, 
$$\begin{array}{l}
\ds \bigg| \frac{g(x + th)(\xi) - g(x(\xi))}{t} - g'(x(\xi))h(\xi)\bigg| 
  = \bigg|  \int_0^1 [g'(x(\xi) + t\sigma h(\xi))- g'(x(\xi))]h(\xi) \,d\sigma \bigg| 
\\
\\
= \ds \bigg|  \int_0^1  \int_0^1g''(x(\xi) + t\sigma \eta h(\xi)) t \sigma h(\xi)^2\,d\eta  \,d\sigma \bigg| 
\leq t\|h\|_{\infty}^2 C(1 + 2^{m-1}|(|x(\xi)|^{m}+\|h\|_{\infty}^{m})).
\end{array}$$
Now, taking the square and integrating over $(0,1)$, 
yields
$$\left\| \frac{F(x+th) - F(x)}{t} - g'\circ x \cdot h\right\|_H \leq t C(h)\left(1+ \|x\|_{L^{2m}}^{m}\right).$$
This implies that for each  $x\in L^{2m}(0,1)$, $F$  is differentiable at $x$ in any direction  $h\in H^1_0(0,1)$ and that
$$\frac{\partial F(x)}{\partial h} = g'\circ x \cdot h .$$
Let us notice  that $F$, $\partial F/\partial h$  belong to $L^q(H, \mu; H)$ for every $q\geq 1$. Indeed, \eqref{e4.7} implies that $|g(t)|\leq M(1+|t|^{m+2})$, $|g'(t)|\leq M(1+|t|^{m+1})$ for every $t\in \R$ and for some $M>0$, so that 
$|F(x(\xi))| \leq M(1+|x(\xi)|^{m+2})$, $|\partial F(x)/\partial h (\xi)| \leq M(1+|x(\xi)|^{m+1})\|h\|_{\infty}$
and then
$$\|F(x)\|^2_H \leq \int_0^1M^2 (1+|x(\xi)|^{m+2})^2d\xi, \quad \bigg\| \frac{\partial F(x)}{\partial h} (x)\bigg\|_H^2 \leq \|h\|_{\infty}^2\int_0^1M^2 (1+|x(\xi)|^{m+1})^2d\xi ,$$
and the right-hand  sides belong to $L^q(H, \mu)$ for every $q$. 
It follows from \cite[Section 5.2]{Boga} that  $F$ belongs to $G^{q, 1}(H, \mu; H)$ (i.e.,   $F$ belongs to $L^{q}(H, \mu; H)$, it is weakly differentiable in all directions of the Cameron--Martin space $H^1_0(0,1)$ and any weak derivative $\frac{\partial F(x)}{\partial h}$ with $h\in H^1_0(0,1)$ can be expressed as $\Psi(x)h$, where $\Psi \in L^q(H, \mu; {\mathcal L}(H^1_0(0,1), H))$ is such that $\partial F(x)/\partial h = \Psi(x)(h)$). To show that   $F\in W^{1,q}_{1/2}(H, \mu; H)$  we have still to check that (\cite[Proposition 5.4.6, Corollary 5.4.7]{Boga})
$$\int_H \bigg(\sum_{h, k\in \N}\lambda_h\lambda_k  \langle \partial F(x)/\partial e_h, e_k\rangle ^2\bigg)^{q/2}d\mu <\infty . $$
This is because a canonical orthonormal basis of   $H^1_0(0,1)$ is just the set $\{\sqrt{\lambda_k}e_k:\;k\in \N\}$. 
Recalling that $\|e_k\|_{\infty} = \sqrt{2}$ for every $k$,  we get
$$|\langle \partial F(x)/\partial e_h, e_k\rangle | = \bigg| \int_0^1 g'(x(\xi))e_h(\xi)e_k(\xi)d\xi \bigg|
\leq 2M\int_0^1(1+|x(\xi)|^{m+1})d\xi = 2M(1+ \|x\|_{L^{ m+1}}^{m+1})$$
for each  $h$, $k\in \N$,  which implies
$$\int_H \bigg(\sum_{h, k\in \N}\lambda_h\lambda_k  \langle \partial F(x)/\partial e_h, e_k\rangle ^2\bigg)^{q/2}d\mu \leq 2M \int_H ({\rm Tr}\,Q)^q\|x\|_{L^{ m+1}}^{q(m+1)}d\mu <\infty , $$
so that $F\in W^{1,q}_{1/2}(H, \mu; H)$.

Now we can   show that  $F_\alpha\to F$ as $\alpha\to 0$. In fact, since   \eqref{e4.7} is fulfilled with constant independent of $\alpha$, there is $M_1>0$ independent of $\alpha$ such that 
$$|g_{\alpha}(t)|\leq M_1(1+|t|^{m +2}), \quad |g_{\alpha}'(t)|\leq M_1(1+|t|^{m+1}), \quad t\in \R .$$
Concerning  the  convergence of  $g_{\alpha}\circ x$ to  $g\circ x$ in $L^q(H, \mu;H)$ we have
$$\begin{array}{l}
\ds \int_H \|g_{\alpha}\circ x - g\circ x\|_H^q d\mu = \int_H \bigg( \int_0^1 |g_{\alpha}(x(\xi))- g(x(\xi))| ^2d\xi \bigg)^{q/2}d\mu 
\\
\\
\ds \leq  \int_H \int_0^1 |g_{\alpha}(x(\xi))- g(x(\xi))| ^{q}d\xi \, d\mu \end{array}, $$
and   the last integral goes to $0$ as  $\alpha\to 0$ by the dominated convergence theorem. Therefore $F_{\alpha}(x) =  g_{\alpha}\circ x$ converges to $F$ in $L^q(H,\mu; H)$. 
Concerning the convergence  in $W^{1,q}_{1/2}(H, \mu;H)$ we have
$$\begin{array}{l}\ds \int_H \bigg(\sum_{h, k\in \N}\lambda_h\lambda_k  \langle  \partial (g_{\alpha}\circ x)/\partial e_h - \partial (g\circ x)/\partial e_h , e_k\rangle ^2\bigg)^{q/2}d\mu \\
\\
\ds = \int_H \bigg(\sum_{h, k\in \N}\lambda_h\lambda_k  \bigg(\int_0^1(g_{\alpha}'(x(\xi))- g'(x(\xi)))e_h(\xi) e_k(\xi)d\xi\bigg)^2\bigg)^{q/2}d\mu \end{array}$$
$$\begin{array}{l}\ 
\ds \leq C_q \int_H \bigg(\sum_{h, k\in \N}\lambda_h\lambda_k \int_0^1 |g_{\alpha}'(x(\xi))- g'(x(\xi))|^2d\xi\bigg)^{q/2}d\mu
\\
\\
\ds \leq C_q ({\rm Tr}\,Q)^q \int_H \int_0^1 |g_{\alpha}'(x(\xi))- g'(x(\xi))|^q d\xi\,d\mu ,
 \end{array}$$
and   the last integral vanishes as $\alpha\to 0$ again by   the dominated convergence theorem.    \end{proof}

We shall use Proposition \ref{p4.3} to prove that the Moreau--Yosida approximations   $U_{\alpha}$ converge  to  $U$  in $W^{2,q}_{1/2}(H, \mu )$ for every $q$ [for the moment, we only know  convergence in $W^{1,q} (H, \mu )$].

\begin{Proposition}
\label{p4.4}
Let $\Phi:\R\mapsto \R$ be any $C^3$ convex lowerly bounded function such that 
\begin{equation}
\label{e4.8}
|\Phi'''(t)|\leq C(1 + |t|^{m}), \quad t\in \R , 
\end{equation}
for some $C$, $m>0$. 
Then  $U\in W^{2,q}_{1/2}(H, \mu )$ for all $q>1$, and we have 
$$\lim_{\alpha \to 0}U_{\alpha} = U\quad\mbox{ in}\;\;W^{2,q}_{1/2}(H, \mu ),\;\forall\;q>1.$$
 \end{Proposition}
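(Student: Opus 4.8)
The plan is to reduce everything to Proposition \ref{p4.3} by exploiting the superposition structure of $U$ and the fact that the Moreau--Yosida regularization respects it. First I would observe that, since the functional $U(x)=\int_0^1\Phi(x(\xi))\,d\xi$ decouples over $\xi$, its Moreau--Yosida approximation is obtained by regularizing $\Phi$ pointwise: if $\Phi_\alpha(t)=\inf_{s\in\R}\{\Phi(s)+(t-s)^2/2\alpha\}$ denotes the one--dimensional Moreau--Yosida approximation of $\Phi$, then
\begin{equation*}
U_\alpha(x)=\int_0^1\Phi_\alpha(x(\xi))\,d\xi,\qquad x\in H.
\end{equation*}
This follows by interchanging the infimum with the integral: the pointwise minimizer $\xi\mapsto J_\alpha(x(\xi))$, with $J_\alpha=(I+\alpha\Phi')^{-1}$, is Lipschitz in $x(\xi)$ and hence lies in $H$, so it is an admissible competitor realizing the pointwise infimum. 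Consequently $DU_\alpha(x)=\Phi_\alpha'\circ x$, while $DU(x)=\Phi'\circ x$ by Proposition \ref{p4.2} (applicable since \eqref{e4.8} forces polynomial growth of $\Phi'$ and $\Phi''$). Thus both gradients are superposition operators, and the whole statement reduces to applying Proposition \ref{p4.3} with $g=\Phi'$ and $g_\alpha=\Phi_\alpha'$.

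For the first assertion I would use that membership $U\in W^{2,q}_{1/2}(H,\mu)$ amounts to $U\in W^{1,q}_{1/2}(H,\mu)$ together with $DU\in W^{1,q}_{1/2}(H,\mu;H)$, because the weighted Hessian $Q^{1/2}D^2U\,Q^{1/2}$ is precisely the weighted derivative of the vector field $DU$ that is controlled by the seminorm computed in Proposition \ref{p4.3}. Since $\Phi\in C^3$, the function $g=\Phi'$ is $C^2$ and satisfies \eqref{e4.7} with $g''=\Phi'''$, so Proposition \ref{p4.3} yields $DU=\Phi'\circ x\in W^{1,q}_{1/2}(H,\mu;H)$ for every $q>1$, whence $U\in W^{2,q}_{1/2}(H,\mu)$.

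For the convergence I would invoke the second part of Proposition \ref{p4.3}, which requires checking three purely one--dimensional facts about $\Phi_\alpha$: that $\Phi_\alpha\in C^3$, that $\Phi_\alpha'''$ satisfies \eqref{e4.7} uniformly in $\alpha$, and that $\Phi_\alpha'\to\Phi'$, $\Phi_\alpha''\to\Phi''$ pointwise. Differentiating the resolvent identity $J_\alpha(t)+\alpha\Phi'(J_\alpha(t))=t$ gives $J_\alpha'=1/(1+\alpha\Phi''(J_\alpha))$ and, using $\Phi_\alpha'=\Phi'\circ J_\alpha$,
\begin{equation*}
\Phi_\alpha''(t)=\frac{\Phi''(J_\alpha(t))}{1+\alpha\Phi''(J_\alpha(t))},\qquad
\Phi_\alpha'''(t)=\frac{\Phi'''(J_\alpha(t))}{\big(1+\alpha\Phi''(J_\alpha(t))\big)^{3}}.
\end{equation*}
Since $\Phi$ is convex, $\Phi''\ge0$, so the denominators are $\ge1$; hence $|\Phi_\alpha'''(t)|\le|\Phi'''(J_\alpha(t))|\le C(1+|J_\alpha(t)|^{m})$, and the nonexpansiveness of $J_\alpha$ together with the boundedness of $J_\alpha(0)$ for small $\alpha$ gives $|J_\alpha(t)|\le|t|+c$, whence the uniform bound \eqref{e4.7} for $\Phi_\alpha'''$. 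The pointwise convergences follow from $J_\alpha(t)\to t$ and the continuity of $\Phi''$, $\Phi'''$. Then Proposition \ref{p4.3} gives $DU_\alpha=\Phi_\alpha'\circ x\to\Phi'\circ x=DU$ in $W^{1,q}_{1/2}(H,\mu;H)$, and combined with $U_\alpha\to U$ in $L^q(H,\mu)$ (shown as in the proof of Proposition \ref{p4.2}) this yields $U_\alpha\to U$ in $W^{2,q}_{1/2}(H,\mu)$.

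The main obstacle is the passage from $U$ to $\Phi_\alpha$: justifying the decoupling $U_\alpha=\int_0^1\Phi_\alpha(x(\xi))\,d\xi$ rigorously (the interchange of infimum and integral, i.e.\ the measurable selection of the pointwise minimizer) and carrying out the one--dimensional differentiation that produces the uniform growth bound for $\Phi_\alpha'''$. Once these elementary but delicate points are settled, all the infinite--dimensional work is delegated to Proposition \ref{p4.3}.
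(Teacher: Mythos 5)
Your proposal is correct and follows essentially the same route as the paper: both reduce everything to Proposition \ref{p4.3} applied with $g=\Phi'$ and $g_\alpha=\Phi'\circ(I+\alpha\Phi')^{-1}$ (your $\Phi_\alpha'$), verifying pointwise convergence and uniform polynomial bounds. The only differences are presentational: the paper obtains the formula for $DU_\alpha$ from Br\'ezis' identity $DU_\alpha(x)=D_0U(y_\alpha)$ with $y_\alpha+\alpha\Phi'(y_\alpha)=x$ solved pointwise, rather than from the decoupling $U_\alpha(x)=\int_0^1\Phi_\alpha(x(\xi))\,d\xi$, and it checks uniform growth bounds on $g_\alpha$ and $g_\alpha'$ (the bounds actually used inside the proof of Proposition \ref{p4.3}) instead of your explicit uniform bound on $g_\alpha''=\Phi_\alpha'''$.
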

\begin{proof}
Let us apply Proposition \ref{p4.3} to $F(x)  = DU(x) = g\circ x $ with $g= \Phi' $.  Since $g''$ has   polynomial growth, $F\in W^{2,q}_{1/2}(H, \mu; H)$ for all $q$, so that $U\in W^{2,q}_{1/2}(H, \mu)$ for all  $q$. Moreover 
$DU_{\alpha}(x) = D_0U(y_{\alpha})$, where $y_{\alpha}$ is the solution of 
$$y_{\alpha} + \alpha D_0U(y_{\alpha}) = x,$$ 
that is
$$y_{\alpha} + \alpha \Phi'(y_{\alpha}) = x.$$
Therefore
$$y_{\alpha}(\xi) = (I+ \alpha \Phi ')^{-1}(x(\xi)), \quad 0<\xi < 1, $$
and so
$$DU_{\alpha}(x) = \Phi' \circ (I+ \alpha \Phi ')^{-1}\circ x . $$
Setting  $g_{\alpha}(t) = \Phi' \circ (I+ \alpha \Phi ')^{-1}(t)$, we see that $g_{\alpha}$  converges pointwise to $g=\Phi'$, and 
$$g_{\alpha}'= \frac{\Phi''\circ (I+ \alpha \Phi ')^{-1}}{(1+\alpha \Phi''\circ (I+ \alpha \Phi ')^{-1})}$$ converges pointwise to  $g'= \Phi''$. 

Moreover we notice that there exists $M>0$,  independent of $\alpha \in (0, 1)$ such that $|(I+ \alpha \Phi ')^{-1}(t)|\leq M + |t|$ for all $t\in \R$.   \eqref{e4.8} implies that $\Phi'$ and $\Phi ''$ have polynomial growth as well; in particular $|\Phi '(t)| \leq c_1(1+|t|^{m+2})$, so that   $|g_{\alpha}(t) | \leq c_1(1+ (M + |t|)^{m+2})$. A similar estimate with $m+1$ instead of $m+2$ holds also  for $|g_{\alpha}'(t) |$. By the second part of Proposition \ref{p4.3},   $DU_{\alpha}$ converges to  $DU$ in $W^{1,q}_{1/2}(H, \mu; H)$ as $\alpha\to 0$, thereby
$U_{\alpha}$ converges to $U$ in $W^{2,q}_{1/2}(H, \mu )$. 
\end{proof} 

As a final step,  we can show that the solution to \eqref{e1.1} satisfies \eqref{e2.19}  under the assumptions of Proposition \ref{p4.4}.

\begin{Proposition}
\label{p4.5}
Let  $U$ be defined by \eqref{e4.2} with  $\Phi:\R\mapsto \R$ convex, bounded from below, of 
class $C^3$ and satisfying \eqref{e4.8}.  Then for every $\lambda >0$ and $f\in L^2(H, \nu)$ the weak solution $u$ of \eqref{e1.1} satisfies \eqref{e2.19}. 
 \end{Proposition}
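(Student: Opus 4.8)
The plan is to follow the scheme of the proof of Theorem~\ref{t2.10}, but to replace the pointwise convergence $D^2U_\alpha\to D^2U$ furnished there by Lemma~\ref{l2.9} (which is not available here, since for unbounded $\Phi''$ the map $U$ in \eqref{e4.2} need not be Fréchet $C^2$ on $H$) by the explicit superposition structure of $U$ and of its Moreau--Yosida approximations, as established in Propositions~\ref{p4.3} and~\ref{p4.4}. First I would reduce to the case $f\in C_b(H)$: this is where Lemma~\ref{l2.8} is applicable, and where the right-hand side of \eqref{e2.16} can be passed to the limit by dominated convergence, using $f^2\in L^\infty\subset L^1(\mu)$ and $e^{-2U_{\alpha_n}}\le e^{-2\inf U}$. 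Once \eqref{e2.19} is known for $f\in C_b(H)$, the extension to all $f\in L^2(H,\nu)$ follows by density together with the $W^{2,2}(H,\nu)$-continuity of $f\mapsto u$ from Theorem~\ref{t2.7} and one more application of Fatou's lemma (legitimate because $\Phi''\ge 0$).

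So let $f\in C_b(H)$, and let $U_\alpha$ be the Moreau--Yosida approximations \eqref{e1.8}, with strong solutions $u_\alpha$ of \eqref{e2.14}. By Lemma~\ref{l2.8} there is $\alpha_n\to 0$ with $u_{\alpha_n}\to u$ in $W^{1,2}(H,\nu)$, hence $Du_{\alpha_n}\to Du$ in $L^2(H,\nu;H)\cong L^2\bigl(H\times(0,1),\nu\otimes d\xi\bigr)$; passing to a subsequence I may assume $(Du_{\alpha_n}(x))(\xi)\to (Du(x))(\xi)$ for $(\nu\otimes d\xi)$-a.e.\ $(x,\xi)$. From the proof of Proposition~\ref{p4.4} one has $DU_{\alpha_n}(x)=g_{\alpha_n}\circ x$ with $g_{\alpha_n}=\Phi'\circ(I+\alpha_n\Phi')^{-1}$, so $D^2U_{\alpha_n}(x)$ is the multiplication operator $h\mapsto g_{\alpha_n}'(x(\cdot))\,h(\cdot)$ and, computing the double series $\sum_{h,k}D_{hk}U_{\alpha_n}\,D_hu_{\alpha_n}D_ku_{\alpha_n}$ (which is legitimate since all terms are nonnegative, $\Phi$ being convex),
$$
\langle D^2U_{\alpha_n}Du_{\alpha_n},Du_{\alpha_n}\rangle=\int_0^1 g_{\alpha_n}'(x(\xi))\,\bigl((Du_{\alpha_n}(x))(\xi)\bigr)^2\,d\xi ,
$$
and likewise $\langle D^2U\,Du,Du\rangle=\int_0^1 \Phi''(x(\xi))\,((Du(x))(\xi))^2\,d\xi$, the identification of $D^2U$ with this Nemytskii operator being exactly the content of Propositions~\ref{p4.3}--\ref{p4.4}. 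Since $g_{\alpha_n}'\to\Phi''$ pointwise and $U_{\alpha_n}\to U$ pointwise, the nonnegative joint densities $g_{\alpha_n}'(x(\xi))((Du_{\alpha_n}(x))(\xi))^2 e^{-2U_{\alpha_n}(x)}$ converge a.e.\ on $H\times(0,1)$ to $\Phi''(x(\xi))((Du(x))(\xi))^2 e^{-2U(x)}$.

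The decisive step is then to apply Fatou's lemma on the product space $H\times(0,1)$ with the measure $\mu\otimes d\xi$: this gives
$$
Z\int_H \langle D^2U\,Du,Du\rangle\,d\nu\le \liminf_{n\to\infty}\int_H \langle D^2U_{\alpha_n}Du_{\alpha_n},Du_{\alpha_n}\rangle\,e^{-2U_{\alpha_n}}\,d\mu\le 4\liminf_{n\to\infty}\int_H f^2 e^{-2U_{\alpha_n}}\,d\mu ,
$$
where the last inequality is the relevant part of \eqref{e2.16}; dividing by $Z$ and letting the right-hand integral tend to $4\int_H f^2\,d\nu$ by dominated convergence yields \eqref{e2.19}. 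The step I expect to be the main obstacle is precisely this passage to the limit. One cannot pass the limit inside the inner $\xi$-integral for fixed $x$, because $g_{\alpha_n}'(x(\cdot))$ grows polynomially and is not dominated, while $((Du_{\alpha_n}(x))(\cdot))^2$ converges only in $L^1(0,1)$, so the product is uncontrollable fibrewise. The resolution is to observe that it is the \emph{joint} density on $H\times(0,1)$ that converges a.e.\ (thanks to the $L^2(H\times(0,1))$ convergence of the gradients coming from Lemma~\ref{l2.8}, combined with the pointwise convergence $g_{\alpha_n}'\to\Phi''$ from Proposition~\ref{p4.4}), and that its nonnegativity—guaranteed by the convexity of $\Phi$—is exactly what makes a single application of Fatou on the product space legitimate, bypassing the missing $C^2(H)$ regularity. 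Care must also be taken to keep track of the weight $e^{-2U_{\alpha_n}}$ versus $e^{-2U}$ and of the normalization constant $Z$ throughout.
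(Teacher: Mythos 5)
Your proposal is correct, and its skeleton --- reduction to $f\in C_b(H)$, Lemma \ref{l2.8} to get $u_{\alpha_n}\to u$ in $W^{1,2}(H,\nu)$ along a sequence $\alpha_n\to 0$, the bound \eqref{e2.16} for the Moreau--Yosida approximants, Fatou against $4\int_H f^2 e^{-2U_{\alpha_n}}d\mu$, and density for general $f\in L^2(H,\nu)$ --- is exactly that of the paper's proof of Proposition \ref{p4.5}. Where you genuinely diverge is the organization of the limit passage. The paper never writes the quadratic forms as Nemytskii integrals: it fixes $N$, uses the $W^{2,2}_{1/2}(H,\mu)$-convergence of Proposition \ref{p4.4} to get $D_{hk}U_{\alpha_n}\to D_{hk}U$ pointwise a.e.\ for $h,k\le N$ along a further subsequence, applies Fatou in $n$ to the finite sections $\sum_{h,k\le N}D_{hk}U\,D_hu\,D_ku$, and then applies Fatou a second time in $N$, using $Du(x)\in H^1_0(0,1)$ a.e.\ (from $\|Q^{-1/2}Du\|\in L^2$, Theorem \ref{t2.7}) and $D^2U(x)\in {\mathcal L}_2(H^1_0(0,1))$ a.e.\ to make sense of the full double series. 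Your single Fatou on $(H\times(0,1),\mu\otimes d\xi)$ with the joint densities $g_{\alpha_n}'(x(\xi))\,((Du_{\alpha_n}(x))(\xi))^2\,e^{-2U_{\alpha_n}(x)}$ buys a shorter argument: it avoids the double limit, and in particular it bypasses the comparison of the integrated sections with $4\int_H f^2e^{-2U_{\alpha_n}}d\mu$, a step the paper takes implicitly and which is not a pointwise truncation (for a multiplication operator, $\langle D^2U_{\alpha_n}(x)P_Nw,P_Nw\rangle$ need not be $\le \langle D^2U_{\alpha_n}(x)w,w\rangle$). The price is that you must verify the superposition identities, and two small inaccuracies there should be repaired. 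First, the terms $D_{hk}U_{\alpha_n}D_hu_{\alpha_n}D_ku_{\alpha_n}$ are not individually nonnegative (only the quadratic form is); but no rearrangement argument is needed, since $0\le g_{\alpha_n}'\le 1/\alpha_n$ is bounded, so $D^2U_{\alpha_n}(x)$ is everywhere the bounded multiplication operator by $g_{\alpha_n}'(x(\cdot))$ and the identity $\langle D^2U_{\alpha_n}(x)w,w\rangle=\int_0^1 g_{\alpha_n}'(x(\xi))w(\xi)^2d\xi$ is immediate. Second, identifying the limit density $\int_0^1\Phi''(x(\xi))((Du(x))(\xi))^2d\xi$ with the double series $\sum_{h,k}D_{hk}U\,D_hu\,D_ku$ appearing in \eqref{e2.19} is not ``exactly the content'' of Propositions \ref{p4.3}--\ref{p4.4}: those give only the entries $D_{hk}U(x)=\int_0^1\Phi''(x(\xi))e_h(\xi)e_k(\xi)d\xi$, and, as in the paper's closing lines, one also needs $Du(x)\in H^1_0(0,1)$ a.e.\ (so that $P_NDu(x)\to Du(x)$ uniformly on $[0,1]$, while $\Phi''(x(\cdot))\in L^1(0,1)$ a.e.) to pass from the sections to the series. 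Both repairs are one line each, so the proof goes through as you designed it.
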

\begin{proof}
It is sufficient to prove the statement for  $f\in C_b(H)$, which is dense in $L^2(H, \nu)$. By Lemma \ref{l2.8} there is a sequence   $(\alpha_n)\to 0$   such that $ u_{\alpha_n} \to u$ in $W^{1,2}(H, \nu)$. Then  
$Du_{\alpha_n} \to Du$ in $L^2(H, \nu; H)$ so that (up to a subsequence) $Du_{\alpha_n}(x) \to Du(x)$  for almost all $x$. 
By Proposition \ref{p4.4}, $U_{\alpha_n}$ converges to $U$ in $W^{2,2}_{1/2}(H, \mu)$, thereby for all  fixed  $h, k\in \N$ we have $D_{hk}U_{\alpha_n}\to D_{hk}U$ in $L^2(H, \mu)$. 
Let us fix $N\in \N$. Possibly choosing a further subsequence, we have  $D_{hk}U_{\alpha_n}\to D_{hk}U$ pointwise a.e.  for all  $h, k\leq N$. 
Therefore for $\mu$- a.e. $x\in H$ we have
$$\lim_{n\to \infty} \sum_{h,k=1}^N D_{hk}U_{\alpha_n}(x) D_hu_{\alpha_n}(x) D_ku_{\alpha_n}(x)  e^{-2U_{\alpha_n}(x)}=
\sum_{h,k=1}^N  D_{hk}U (x) D_hu (x) D_k(x)   e^{-2U (x)}$$
and by Fatou's lemma, 
$$\begin{array}{l}
\ds \int_H \sum_{h,k=1}^N D_{hk}U (x) D_hu (x)  D_ku(x)\,   d\nu 
 = \int_H \sum_{h,k=1}^N D_{hk}U (x) D_hu (x)  D_ku(x)\, e^{-2U (x)} d\mu 
 \\
 \\
 \ds \leq \liminf_{n\to \infty} \int_H 
\sum_{h,k=1}^N D_{hk}U_{\alpha_n}(x) D_hu_{\alpha_n}(x)  D_ku_{\alpha_n}(x)\, e^{-2U_{\alpha_n}(x)}d\mu
\\
\\
\ds \leq 4 \liminf_{n\to \infty} \int_H f^2 \, e^{-2U_{\alpha_n}}d\mu = 4  \int_H f^2 \, d\nu .
\end{array}$$
Now by Theorem \ref{t2.7} we know that $x\mapsto \|Du(x)\|_{H^1_0(0,1)} = \|Q^{-1/2}Du(x)\|_H/\sqrt{2}$
$\in L^2(H, \mu)$, therefore for almost any  $x\in H$, $Du(x)\in H^1_0(0,1)$, whereas by  Proposition \ref{p4.4} it follows that  $x\mapsto  \sum_{h,k=1}^{\infty}\lambda_h\lambda_k (D_{hk}U(x))^2$ belongs to  $L^1(H, \mu)$, that is      $x\mapsto \|D^2U(x)\|_{{\mathcal L}_2(H^1_0(0,1))} \in L^2(H, \mu)$. Therefore  for almost   $x\in H$, $D^2U(x)\in {\mathcal L}_2(H^1_0(0,1))$.  It follows that  for almost any  $x\in H$ the sequence
$\sum_{h,k=1}^{N}D_{hk}U(x)D_ku(x)D_ku(x)
 $ converges to $\sum_{h,k=1}^{\infty}D_{hk}U(x)D_ku(x)D_ku(x)$. 
Using once again   Fatou's lemma we can  conclude 
that
$$\begin{array}{l}
\ds \int_H \sum_{h,k=1}^{\infty} D_{hk}U (x) D_hu (x)  D_ku(x)\,   d\nu 
=  \int_H  \lim_{N\to \infty} \sum_{h,k=1}^N D_{hk}U (x) D_hu (x)  D_ku(x)\,   d\nu 
\\
\\
\ds \leq \liminf_{N\to \infty} 
 \int_H   \sum_{h,k=1}^N D_{hk}U (x) D_hu (x)  D_ku(x)\,   d\nu 
\leq 4  \int_H f^2 \, d\nu .\end{array}$$
\end{proof}

Then we can apply all the results of Sections 3 and 4. In particular, we have the following theorem.

\begin{Theorem}
\label{t4.6}
 Let $\Phi:\R\mapsto \R$ be any convex $C^1$ lowerly bounded function satisfying \eqref{e4.4}, and let $U$ be defined by \eqref{e4.2}.   Then for every $\lambda >0$ and $f\in L^2(H, \nu)$ the weak solution $u$   to \eqref{e1.1}  belongs to $W^{2,2}(H, \nu)$ $\cap $ $W^{1,2}_{-1/2}(H, \nu)$, and it satisfies \eqref{e2.17}, \eqref{e2.18}. 
If in addition $\Phi$ is $C^3$ and  satisfies \eqref{e4.8}, then  
 $u$ satisfies   \eqref{e2.19} as well. 
\end{Theorem}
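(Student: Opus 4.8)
The plan is to deduce both assertions from the general results of Section 3 together with the auxiliary propositions already established in this section; the only real task is to check that the stated hypotheses on $\Phi$ feed correctly into those results.

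First I would treat the unconditional assertion. The hypotheses on $\Phi$ (convex, of class $C^1$, bounded from below, satisfying \eqref{e4.4}) are precisely those of Proposition \ref{p4.2}. Convexity, lower semicontinuity and boundedness from below of $U$ defined by \eqref{e4.2} follow at once from the corresponding properties of $\Phi$ (the first two as already observed in the text, via convexity of $t\mapsto\Phi(t)$ and Fatou's lemma). Proposition \ref{p4.2} then yields $U\in W^{1,p}_0(H,\mu)$ for every $p\geq 1$; since $Q^{1/2}\in\mathcal{L}(H)$ one has $\|Q^{1/2}D\varphi\|\leq\|Q^{1/2}\|_{\mathcal{L}(H)}\|D\varphi\|$, hence $W^{1,2}_0(H,\mu)\hookrightarrow W^{1,2}_{1/2}(H,\mu)$ and in particular $U\in W^{1,2}_{1/2}(H,\mu)$. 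Thus Hypothesis \ref{Hyp} is satisfied, and Theorem \ref{t2.7} applies verbatim, giving $u\in W^{2,2}(H,\nu)\cap W^{1,2}_{-1/2}(H,\nu)$ together with \eqref{e2.17} and \eqref{e2.18}.

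For the additional assertion I would first verify that the stronger hypotheses are consistent with the first part: if $\Phi\in C^3$ satisfies \eqref{e4.8}, then integrating the polynomial bound on $\Phi'''$ twice shows that $\Phi''$ and $\Phi'$ have polynomial growth as well, so \eqref{e4.4} holds (this is already recorded in the proof of Proposition \ref{p4.4}); hence the first part applies and $u\in W^{2,2}(H,\nu)$. The remaining hypotheses---$\Phi$ convex, bounded from below, of class $C^3$, satisfying \eqref{e4.8}---are exactly those of Proposition \ref{p4.5}, which directly delivers \eqref{e2.19} for the weak solution $u$ to \eqref{e1.1}, for every $\lambda>0$ and $f\in L^2(H,\nu)$.

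The theorem is therefore purely a matter of assembling earlier results, and I expect no substantial analytic obstacle at this stage: the genuine work has been carried out in Proposition \ref{p4.2} (the Sobolev regularity of $U$), in Theorem \ref{t2.7} (the abstract maximal regularity under Hypothesis \ref{Hyp}), and in Proposition \ref{p4.5} (the estimate \eqref{e2.19} in the present example). The only points requiring a moment of care are the compatibility of the two growth conditions \eqref{e4.4} and \eqref{e4.8}, and the elementary embedding $W^{1,2}_0(H,\mu)\hookrightarrow W^{1,2}_{1/2}(H,\mu)$ used to pass from the conclusion of Proposition \ref{p4.2} to Hypothesis \ref{Hyp}.
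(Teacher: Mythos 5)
Your proposal is correct and follows essentially the same route as the paper, which states Theorem \ref{t4.6} as a direct consequence of the earlier results: Hypothesis \ref{Hyp} is verified through Proposition \ref{p4.2} (together with the convexity, lower semicontinuity and lower boundedness of $U$ and the trivial embedding $W^{1,2}_0(H,\mu)\subset W^{1,2}_{1/2}(H,\mu)$, since $Q^{1/2}$ is bounded), so that Theorem \ref{t2.7} gives the first assertion, while Proposition \ref{p4.5} gives estimate \eqref{e2.19} under the additional $C^3$ hypothesis and \eqref{e4.8}. Your remark that \eqref{e4.8} forces polynomial growth of $\Phi'$ and $\Phi''$, hence \eqref{e4.4}, matches the observation made in the proof of Proposition \ref{p4.4} and correctly confirms the consistency of the two sets of hypotheses.
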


With our choice of $U$,  the stochastic differential equation \eqref{e1.3}
in $H$  reads as 
\begin{equation}
\label{react-diff}
dX=(AX-\Phi'(X))dt+dW(t),\quad X(0)=x,  
\end{equation}
and hence it is a  reaction-diffusion SPDE, whose Kolmogorov operator is just $\mathcal K$. 
 As in Section \ref{Weak = strong}, $W(t)$ is any $H$-valued cylindrical Wiener process defined in a probability space $(\Omega, {\mathcal F}, \P)$. 
 The connection between \eqref{react-diff} and  \eqref{e1.1} is stated in the next proposition. The definition of mild solution to \eqref{react-diff} is the same as in the case of Lipschitz continuous $DU$.  
 
\begin{Proposition}
\label{cosi'vuole il referee}
Let   $\Phi:\R\mapsto \R$ be a convex lowerly bounded function satisfying \eqref{e4.4} for some $p_2\geq 1$. Then for every $x\in L^{2p_2}(0,1)$ (hence, for $\mu$-a.e. $x\in H$) problem \eqref{react-diff} has a unique mild solution $X$. For every $f\in C_b(H)$ we have
\begin{equation}
\label{Kolmu}
u(x) = \int_0^{\infty} e^{-\lambda t}  \E(f(X(t,x)))\,dt, 
\end{equation}
$\mu$-a.e. $x\in H$, where $u$ is the weak solution to   \eqref{e1.1}.  
\end{Proposition}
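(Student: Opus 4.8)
The plan is to reduce the whole statement to the Lipschitz situation of Section~\ref{sect:Lipschitz} through the Moreau--Yosida approximations $U_\alpha$ of \eqref{e1.8}, and then to let $\alpha\to0$. Recall that by Proposition~\ref{p4.2} we have $DU(x)=\Phi'\circ x$ for $x\in L^{2p_2}(0,1)$, and that (as in the computation of Proposition~\ref{p4.4}) each $DU_\alpha$ is the superposition operator attached to the real Yosida approximation $\Phi'\circ(I+\alpha\Phi')^{-1}$, which is monotone and globally Lipschitz. Hence, for each $\alpha>0$ the equation
\[
dX_\alpha=(AX_\alpha-DU_\alpha(X_\alpha))\,dt+dW(t),\qquad X_\alpha(0)=x,
\]
has a unique mild solution, by the Lipschitz theory invoked in Section~\ref{Weak = strong} (\cite[Theorem 5.5.8]{DPZ2}).

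First I would prove existence and uniqueness of the mild solution of \eqref{react-diff}. Since the noise is additive, for $\alpha,\beta>0$ the difference $Y=X_\alpha-X_\beta$ solves the pathwise \emph{deterministic} equation $Y'=AY-(DU_\alpha(X_\alpha)-DU_\beta(X_\beta))$, so $t\mapsto\|Y(t)\|^2$ is differentiable and, using $\langle Az,z\rangle\le-\omega\|z\|^2$ together with the monotonicity inequality for Yosida approximations of $\partial U$,
\[
\langle DU_\alpha(u)-DU_\beta(v),u-v\rangle\ge-(\alpha+\beta)\,\|DU_\alpha(u)\|\,\|DU_\beta(v)\|
\]
(see \cite[Chapter 2]{Brezis}), one gets
\[
\tfrac12\tfrac{d}{dt}\|Y\|^2\le-\omega\|Y\|^2+(\alpha+\beta)\,\|DU_\alpha(X_\alpha)\|\,\|DU_\beta(X_\beta)\| .
\]
Combining this with a uniform a priori bound on $\E\int_0^T\|DU_\alpha(X_\alpha(t))\|^2\,dt$ (obtained from an energy estimate exploiting the convexity of $U$, the bound \eqref{e4.5}, and the polynomial growth \eqref{e4.4}), Gronwall's lemma shows that $(X_\alpha)$ is Cauchy, uniformly on $[0,T]$ in mean square, so it converges to a limit $X$; a further subsequence converges almost surely for each fixed $t$. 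The assumption $x\in L^{2p_2}(0,1)$ ensures $\Phi'(x)\in H$ and, together with the one-dimensional spatial regularity of the stochastic convolution $\int_0^te^{(t-s)A}dW(s)$, that $\Phi'(X(s))\in H$ for a.e.\ $s$; this makes the mild formulation meaningful for $X$ and lets one pass to the limit in it. Uniqueness follows from the same computation applied to two solutions.

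Next I would prove \eqref{Kolmu}, arguing exactly as in Proposition~\ref{KolmLip}. Fix first $f\in{\mathcal F}{\mathcal C}^\infty_b(H)$. By Lemma~\ref{l2.8} there is $\alpha_n\to0$ with $u_{\alpha_n}\to u$ in $W^{1,2}(H,\nu)$, hence, up to a further subsequence, $u_{\alpha_n}(x)\to u(x)$ for $\mu$-a.e.\ $x$. Applying Proposition~\ref{KolmLip} to the Lipschitz potential $U_{\alpha_n}$ and the measure $\nu_{\alpha_n}$ gives
\[
u_{\alpha_n}(x)=\int_0^{\infty}e^{-\lambda t}\,\E\!\big(f(X_{\alpha_n}(t,x))\big)\,dt,\qquad\mu\text{-a.e.\ }x .
\]
Letting $n\to\infty$, the left-hand side tends to $u(x)$ a.e., while on the right-hand side $X_{\alpha_n}(t,x)\to X(t,x)$ a.s.\ for each $t$ by the first step and, $f$ being bounded and continuous, the dominated convergence theorem gives convergence to $\int_0^{\infty}e^{-\lambda t}\,\E(f(X(t,x)))\,dt$. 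Finally I would pass from ${\mathcal F}{\mathcal C}^\infty_b(H)$ to a general $f\in C_b(H)$ as at the end of Proposition~\ref{KolmLip}: approximate $f$ pointwise and in $L^2(H,\nu)$ by mollifications of $f\circ P_n$, use continuity of $R(\lambda,K)$ on the left and dominated convergence on the right.

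The main obstacle is the first step. Verifying that the limit $X$ is a genuine mild solution requires the uniform a priori estimate on $\|DU_\alpha(X_\alpha)\|$ and the convergence $DU_{\alpha_n}(X_{\alpha_n})\to\Phi'(X)$ in $H$ for a.e.\ time, for which the polynomial growth \eqref{e4.4} and the $L^{2p_2}$-regularity of the paths must be used; the subsequent passage to the limit in the representation formula is then routine.
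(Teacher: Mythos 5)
Your treatment of the representation formula \eqref{Kolmu} coincides with the paper's: both arguments apply Proposition~\ref{KolmLip} at the level of the Yosida approximations $U_{\alpha}$, use Lemma~\ref{l2.8} to pass from $u_{\alpha_n}$ to $u$ along a subsequence, and handle the right-hand side via the a.s. convergence $X_{\alpha_n}(t,x)\to X(t,x)$ plus dominated convergence. (Your preliminary reduction to $f\in{\mathcal F}{\mathcal C}^{\infty}_b(H)$ is superfluous: Proposition~\ref{KolmLip} and Lemma~\ref{l2.8} are already stated for $f\in C_b(H)$, and the paper applies them directly.)

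The divergence, and the gap, is in the existence and uniqueness of the mild solution. The paper does not prove this; it invokes \cite[Theorem 5.5.8]{DPZ2} with invariant set $K=L^{2p_2}(0,1)$, checking its hypotheses (in particular the a.s. continuity of $(t,\xi)\mapsto\int_0^t e^{(t-s)A}dW(s)(\xi)$, quoted from \cite[Proposition 4.3]{Barcellona}); that theorem also supplies, through its proof, the a.s. uniform-in-time convergence $X_{\alpha}\to X$ used afterwards. You instead attempt to re-derive this existence result, and your sketch stops exactly at the hard points. The monotonicity/Gronwall computation showing that $(X_{\alpha})$ is Cauchy is correct, but it hinges on the uniform bound on $\E\int_0^T\|DU_{\alpha}(X_{\alpha}(t))\|^2dt$, which you assert rather than prove. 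To obtain it from \eqref{e4.5} one needs $\|DU_{\alpha}(X_{\alpha}(t))\|\le\|D_0U(X_{\alpha}(t))\|=\|\Phi'\circ X_{\alpha}(t)\|_{L^2(0,1)}$, and this inequality is available only at points where $\partial U\neq\emptyset$, i.e. where $X_{\alpha}(t)\in L^{2p_2}(0,1)$; a priori the approximating solutions live only in $H=L^2(0,1)$, so one must first establish $L^{2p_2}$-moment bounds for $X_{\alpha}$ \emph{uniform in} $\alpha$ (the Lipschitz constant of $DU_{\alpha}$ is of order $1/\alpha$, so naive estimates are useless). This requires the spatial continuity of the stochastic convolution together with a dissipativity argument in $L^{2p_2}$ — precisely the content of the theorem the paper cites. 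Likewise, the identification of the limit drift, $DU_{\alpha_n}(X_{\alpha_n})\to\Phi'(X)$, needed to pass to the limit in the mild formulation, is announced but not carried out. As written, the first half of your proof is a program rather than a proof; it becomes complete (and then essentially identical to the paper's) if you replace it by the citation of \cite[Theorem 5.5.8]{DPZ2}.
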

\begin{proof}
Existence of a unique mild solution to \eqref{react-diff} follows from \cite[Theorem 5.5.8]{DPZ2}, that deals with Cauchy problems such as $dX=(AX+F(X))dt+dW(t)$, $X(0)=x$. In our case, $F(x) = -DU(x) = - \Phi'(x)$ satisfies the assumptions of \cite[Theorem 5.5.8]{DPZ2} with 
 $K= L^{2p_2}(0,1)$.  In particular, Hypothesis 5.5 is satisfied, since in \cite[Proposition 4.3]{Barcellona}  it is proved that $(t, \xi)\mapsto \int_0^t e^{(t-s)A}dW(s) (\xi)$ is a.s. continuous. 
 
  The  mild solution   is obtained as the  limit of mild solutions to approximating problems, 
$$dX_{\alpha}=(AX_{\alpha}-DU_{\alpha}(X))dt+dW(t),\quad X(0)=x ,  $$
as $\alpha \to 0$, where $DU_{\alpha}$ are the Yosida approximations of $DU$, and for each $T>0$ we have $\lim_{\alpha \to 0} \sup_{0\leq t\leq T}\|X_{\alpha}(t) - X(t)\|=0$, $\P$-a.e. By Proposition \ref{KolmLip}, 
 for every $\lambda >0$, 
\begin{equation}
\label{Kolm}
R(\lambda, K_{\alpha})f   = \int_0^{\infty} e^{-\lambda t}  \E(f(X_{\alpha}(t,\cdot))dt. 
\end{equation}
We recall that $R(\lambda, K_{\alpha})f =u_{\alpha}$ is the weak solution to \eqref{e2.14}, and that a sequence $u_{\alpha_n}$ with $\alpha_n\to 0$ converges to $u$ in $L^2(H, \mu)$  as $n\to \infty$, by Lemma \ref{l2.8}. Moreover, $\int_0^{\infty} e^{-\lambda t}  \E(f(X_{\alpha_n}(t,\cdot))dt$ goes to $\int_0^{\infty} e^{-\lambda t}  \E(f(X (t,x))dt$ pointwise $\mu$-a.e. and also in $L^2(H, \mu)$, by the dominated convergence theorem. 
Taking $\alpha= \alpha_n$ in \eqref{Kolm} and letting $n\to \infty$ formula \eqref{Kolmu} follows. 
\end{proof}

Concerning perturbed equations, 
\begin{equation}
\label{stocpert}dX=(AX-\Phi'(X)+ B(X))dt+dW(t), 
\end{equation}
we do not know about existence of invariant measures except in the case of bounded perturbations of Ornstein--Uhlenbeck equations. See \cite[Chapter  8]{DPZ2}. 
If $B$ is a bounded Borel function, Proposition \ref{p4.10} yields 
that the corresponding Kolmogorov semigroup $e^{tK_1}$ has an invariant measure $\nu$. The verification of  formula \eqref{Kolmu} where now $X(t,x)$ is the mild  solution  to \eqref{stocpert} and $u = R(\lambda, K_1)$ is not obvious. In fact, even  existence of a mild solution is not obvious. It could  be done through the Girsanov transform, but the argument is quite delicate and we hope to be able to  treat the subject in a future paper.

 \section{Kolmogorov equations of  stochastic Cahn--Hilliard-type problems.}

In Section 5 we have seen that the superposition   $x\mapsto \Phi'\circ x$ may be seen as the gradient of a suitable  function $U$ in the space $ L^2(0,1)$. This is no longer true for operators of the type  $x\mapsto \frac{d}{d\xi} (\Phi'\circ x)$ or $x\mapsto \frac{d^2}{d\xi^2} (\Phi'\circ x)$. However they   may be still interpreted as   gradients, with suitable choices of the space $H$. 

Here we set  $V:=\{ x\in H^1(0,1): \;\int_{0}^1 x(\xi)d\xi =0\}$, with scalar product $\langle x, y\rangle _V = \int_0^1 x'(\xi)y'(\xi)d\xi$, and 
we choose $H$ to be the dual space of $V$, endowed with the dual norm. 
We  consider the spaces $\widetilde{L}^p(0,1):= \{x\in L^p(0,1): \;\int_{0}^1 x(\xi)d\xi =0\}$ as  subspaces of $H$, identifying any $x\in L^p(0,1)$ with zero mean value with the element 
$y\mapsto \int_0^1 x(\xi)y(\xi)d\xi$ of $H$.

The standard extension $B$ of the negative second order derivative on $V$ with values in $H$ is defined by  
$$Bx (y) =  \int_0^1 x'(\xi)y'(\xi) d\xi, \quad y\in V.$$
If $x\in V \cap H^2(0,1)$ and $x'(0)=x'(1) =0$, then $Bx (y) =  -\int_0^1 x''(\xi)y(\xi) d\xi$ so that, with the above identification, $B$ is an extension of (minus) the second order derivative with Neumann boundary condition. The operator $B$ is an isometry between $V$ and $H$, since $\|Bx\|_{H } = \sup_{y\neq 0} 
\langle  x, y\rangle _{V} /\|y\|_{V} = \|x\|_{V}$.  Moreover, if $z\in \widetilde{L}^2(0,1)$ and $x\in V$, then $\langle z,Bx\rangle  _{H} = \langle z,x\rangle _{L^2(0,1)}$. 

Let  $e_k(\xi):=\sqrt{2 }\,\cos(k \pi\xi)$. Then $\{e_k:\;k\in \N\} $ is an orthonormal basis of $\widetilde{L}^2(0,1) $, $B e_k = k^2 \pi^2 e_k  $, and
setting $f_k = k \pi e_k$, the set  $\{f_k:\;k\in \N \}$ is  an orthonormal basis of  $H$. 
We recall that   $P_n$ is the orthogonal projection on the subspace spanned by the first $n$ elements of the basis, 
$$P_nx = \sum_{k=1}^n \langle x, f_k\rangle_{H } f_k.$$

\begin{Remark}
\label{r5.1}
{\em Note that the restriction of $P_n$ to $\widetilde{L}^2(0,1)$  is the orthogonal projection in $\widetilde{L}^2(0,1)$ on the subspace spanned by $e_1$, \ldots $e_n$. Indeed,  for every $x\in \widetilde{L}^2(0,1)$ and $k\in \N$ we have}
$$\langle x, f_k\rangle_{H } f_k = \langle  x,  B^{-1}f_k\rangle_{L^2} f_k = \langle  x,  \frac{e_k}{k \pi}\rangle_{L^2} k \pi {e_k} =  \langle  x,  e_k\rangle_{L^2}   {e_k} .$$
\end{Remark}

Here we set $A = -B^2$ and, as usual,  we denote by  $\mu$ the Gaussian measure on $H$ with zero mean and covariance $Q= -A^{-1}/2$. Note that the eigenvalues of $Q$ are now $\lambda_k := 1/2\pi^4k^4$, and $B= \sqrt{2}Q^{1/2}$.

We consider a function $\Phi:\R\mapsto \R$ satisfying the following assumptions. 

\begin{Hypothesis}
\label{Phi}
$\Phi:\R\mapsto \R$ is a $C^1$ convex lowerly bounded function, satisfying \eqref{e4.4} and  
\begin{equation}
\label{e5.1}
\lim_{r\to \pm\infty} \frac{\Phi(r)}{|r|} = +\infty .
\end{equation}
\end{Hypothesis}
Setting $p_1= p_2+1$, 
 we define $U$ as in Section 5.1, by
\begin{equation}
\label{e4.2CH}
U(x)= \left\{ \begin{array}{lll}  & \ds \int_0^{1} \Phi(x(\xi))d\xi, & x\in \widetilde{L}^{p_1}(0,1),   
\\
\\
 & +\infty , & x\notin \widetilde{L}^{p_1}(0,1). 
\end{array}\right. 
\end{equation}
$U$ is obviously convex and bounded from below, moreover by \cite[Proposition 2.8]{Barbu}, it is lower semicontinuous. To be more precise, in \cite{Barbu} the space $H$ is the dual space of $H^1_0(0,1)$, but the argument goes as well in our case. The subdifferential of $U$ is not empty at each $x\in \widetilde{L}^1(0,1)$ such that $\Phi' \circ x\in V$ and it consists of the unique element $D_0U(x) = B (\Phi' \circ x)$.

We shall see that $U\in  W^{1,2}_{1/2}(H, \mu)$, while 
$U\notin  W^{1,2}_0(H, \mu)$. For the proof,  instead of approaching $U$ by its Moreau--Yosida approximations, we shall approach it by the sequence $U\circ P_n$; namely we set 
$$U_n(x) = \int_0^1\Phi(P_nx(\xi)) d\xi , \quad  x\in H. $$
By \eqref{e4.4}, $\Phi$ satisfies \eqref{e4.1},  and we have $U(x) \leq C(1+ \|x\|_{L^{p_1}(0,1)}^{p_1})$, 
$U_n(x) \leq C(1+ \|P_nx\|_{L^{p_1}(0,1)}^{p_1})$. So, the starting point of our analysis is the study of the functions $x\mapsto  \|x\|_{L^{p}(0,1)}$, $x\mapsto  \|P_nx\|_{L^{p}(0,1)}$ for $p\geq 2$.

\begin{Proposition}
\label{p5.2}
For each  $p\geq 1$ there is $C_p>0$ such that 
\begin{equation}
\label{e5.2}
\int_H \int_0^1 |P_nx (\xi) |^{p}d\xi\,  d\mu \leq C_p\bigg(\sum_{k= 1}^n \frac{1}{k^2 \pi^2} \bigg)^{p/2}, \quad n\in \N, 
\end{equation}
\begin{equation}
\label{e5.3}
\int_H \int_0^1 |P_nx (\xi) -P_mx(\xi)|^{p}d\xi\,  d\mu \leq C_p\bigg(\sum_{k=m+ 1}^n \frac{1}{k^2 \pi^2} \bigg)^{p/2}, \quad m<n\in \N. 
\end{equation}
\end{Proposition}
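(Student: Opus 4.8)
The plan is to reduce both estimates to a one--dimensional Gaussian moment computation, exactly as in the proof of Lemma \ref{l4.1}. The only new feature is that the orthonormal basis of $H$ is $\{f_k\}$ rather than $\{e_k\}$, and I must keep track of the factor $k\pi$ relating the two. First I would record the probabilistic structure of $\mu$: since $\{f_k\}$ is an orthonormal basis of $H$ consisting of eigenvectors of $Q$ (indeed $Qf_k=\lambda_k f_k$, as $f_k=k\pi e_k$ and $Qe_k=\lambda_k e_k$), under $\mu=\mathcal N_{0,Q}$ the coordinates $\widehat x_k:=\langle x,f_k\rangle_H$ are independent centered Gaussian random variables with variance $\lambda_k=1/(2\pi^4 k^4)$. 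Because $P_nx=\sum_{k=1}^n\widehat x_k f_k$ and $f_k=k\pi e_k$, for each fixed $\xi\in(0,1)$ the quantity
$$P_nx(\xi)=\sum_{k=1}^n \widehat x_k\, k\pi\, e_k(\xi)$$
is a finite linear combination of the $\widehat x_k$, hence a centered Gaussian random variable with variance
$$\sigma_n(\xi)^2=\sum_{k=1}^n (k\pi)^2\lambda_k\, e_k(\xi)^2.$$

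The key simplification is that $(k\pi)^2\lambda_k=1/(2k^2\pi^2)$, while $e_k(\xi)^2=2\cos^2(k\pi\xi)\le 2$. Hence $\sigma_n(\xi)^2\le\sum_{k=1}^n 1/(k^2\pi^2)$, a bound uniform in $\xi$. This is precisely the point at which the $k^{-4}$ decay of $\lambda_k$ is converted into the $k^{-2}$ sum appearing on the right--hand side of \eqref{e5.2}. Using the elementary moment identity $\int_{\R}|t|^p\,\mathcal N_{0,\sigma^2}(dt)=c_p\,\sigma^p$ with $c_p:=\int_{\R}|t|^p\,\mathcal N_{0,1}(dt)$, I obtain
$$\int_H |P_nx(\xi)|^p\, d\mu=c_p\,\sigma_n(\xi)^p\le c_p\bigg(\sum_{k=1}^n \frac1{k^2\pi^2}\bigg)^{p/2}.$$
Since the integrand $(x,\xi)\mapsto|P_nx(\xi)|^p$ is nonnegative and jointly measurable, Tonelli's theorem lets me interchange the $d\mu$ and $d\xi$ integrations; integrating the last display over $\xi\in(0,1)$ then yields \eqref{e5.2} with $C_p=c_p$.

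For \eqref{e5.3} the argument is identical: since $P_nx-P_mx=\sum_{k=m+1}^n\widehat x_k f_k$, for fixed $\xi$ the difference $P_nx(\xi)-P_mx(\xi)$ is centered Gaussian with variance $\sum_{k=m+1}^n (k\pi)^2\lambda_k e_k(\xi)^2\le\sum_{k=m+1}^n 1/(k^2\pi^2)$, and the same Gaussian moment formula together with Tonelli gives \eqref{e5.3}. I do not expect any genuine obstacle here; the only step that demands care is the bookkeeping of the factor $k\pi$, which ensures that the pointwise variance involves $(k\pi)^2\lambda_k$ rather than $\lambda_k$ and therefore produces the sum $\sum 1/(k^2\pi^2)$ with the correct exponent on $k$.
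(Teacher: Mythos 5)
Your proof is correct and follows essentially the same route as the paper: for fixed $\xi$ you identify $P_nx(\xi)$ (or the difference $P_nx(\xi)-P_mx(\xi)$) as a centered Gaussian random variable, bound its variance by $\sum 1/(k^2\pi^2)$ using $(k\pi)^2\lambda_k=1/(2k^2\pi^2)$ and $e_k(\xi)^2\le 2$, apply the one-dimensional moment formula, and integrate in $\xi$ via Tonelli. The only (immaterial) difference is bookkeeping of constants: you keep the factor $\tfrac12$ in $\lambda_k$ and get $C_p=c_p$, while the paper absorbs the bound $e_k(\xi)^2\le 2$ into $C_p=2^{p/2}c_p$.
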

\begin{proof}
First of all note that for every $x\in H$, $P_n x$ is a smooth function. Moreover for every $\xi\in (0, 1)$ and $m<n \in \N$, 
the function  $x\mapsto P_nx (\xi)- P_mx(\xi)$ is a Gaussian random variable $N_{0, \sum_{k=m+1}^n \frac{1}{\pi^4k^4}f_k(\xi)^2}$. Then, for $p\geq 1$, 
$$\int_H |P_nx (\xi) - P_mx(\xi)|^{p} d\mu = \int_{\R}|\eta| ^{p}N_{0, \sum_{k=m+1}^n \frac{1}{\pi^4k^4}  f_k(\xi)^2}(d\eta) $$
$$= c_p 
 \bigg(\sum_{k=m+1}^n \frac{1}{k^2 \pi^2} e_k(\xi)^2 \bigg)^{p/2} \leq 2^{p/2}c_p  \bigg(\sum_{k=m+1}^n \frac{1}{k^2 \pi^2} \bigg)^{p/2}, $$
 so that 
$$\int_H \int_0^1 |P_nx (\xi) - P_mx(\xi)|^{p} d\xi\, d\mu = \int_0^1 \int_H |P_nx (\xi) - P_mx(\xi)|^{p}  d\mu \, d\xi \leq 2^{p/2}c_p  \bigg(\sum_{k=m+1}^n \frac{1}{k^2 \pi^2} \bigg)^{p/2};$$
that is, \eqref{e5.3} holds. The proof of \eqref{e5.2} is the same. 
\end{proof}

Proposition \ref{p5.2} has several consequences. 

\begin{Corollary}
\label{c5.3}
$\mu(\widetilde{L}^p(0,1) )= 1$, and the sequence of functions $(x,\xi) \mapsto P_nx(\xi)$ converges to $(x,\xi)\mapsto x(\xi)$ in $L^p(H\times (0,1), \mu\times d\xi)$, for every $p\geq 1$. 
\end{Corollary}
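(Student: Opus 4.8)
The plan is to mimic the argument of Lemma \ref{l4.1}, but with the additional care required by the fact that here $H$ is the abstract dual space $V^*$ rather than $L^2(0,1)$, so that for a generic $x\in H$ the pointwise values $x(\xi)$ are not a priori meaningful. First I would use the bound \eqref{e5.3} of Proposition \ref{p5.2} together with the convergence of the series $\sum_k 1/(k^2\pi^2)=1/6<\infty$ to conclude that the sequence $(x,\xi)\mapsto P_nx(\xi)$ is Cauchy in $L^p(H\times(0,1),\mu\times d\xi)$: for $m<n$ the right-hand side $C_p\big(\sum_{k=m+1}^n 1/(k^2\pi^2)\big)^{p/2}$ tends to $0$ as $m,n\to\infty$. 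Hence the sequence converges in $L^p(H\times(0,1),\mu\times d\xi)$ to some limit $u$, and \eqref{e5.2} guarantees $u\in L^p$ for every $p\geq 1$.

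It then remains to identify $u(x,\cdot)$ with $x$ and to deduce $\mu(\widetilde{L}^p(0,1))=1$. Setting $g_n(x):=\int_0^1|P_nx(\xi)-u(x,\xi)|^p\,d\xi$, the $L^p$ convergence says $g_n\to 0$ in $L^1(H,\mu)$, so along a subsequence $(n_j)$ we have $P_{n_j}x\to u(x,\cdot)$ in $L^p(0,1)$ for $\mu$-a.e. $x$. Each $P_nx$ is a finite linear combination of the cosines $e_k$ with $k\geq 1$, hence has zero mean, and this property passes to the $L^p$ limit, so that $u(x,\cdot)\in\widetilde{L}^p(0,1)$ for $\mu$-a.e. $x$.

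The key point, which is where the dual-space setting actually enters, is to connect this $L^p(0,1)$ limit with the element $x\in H$. The plan is to invoke the continuity of the embedding $\widetilde{L}^p(0,1)\hookrightarrow H$, which follows from the Poincar\'e inequality on zero-mean functions: for $z\in\widetilde{L}^p$ and $y\in V$ one has $|\langle z,y\rangle_{L^2}|\leq \|z\|_{L^1}\|y\|_\infty\leq C\|z\|_{L^p}\|y\|_V$, whence $\|z\|_H\leq C\|z\|_{L^p}$. Consequently $P_{n_j}x\to u(x,\cdot)$ also in $H$. On the other hand, since $\{f_k\}$ is an orthonormal basis of $H$ and $P_n$ is the orthogonal projection onto its first $n$ elements, $P_nx\to x$ in $H$ for every $x\in H$. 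By uniqueness of the limit in $H$ and injectivity of the identification $\widetilde{L}^p(0,1)\hookrightarrow H$, we obtain $x=u(x,\cdot)$ in $H$ for $\mu$-a.e. $x$; in particular $x\in\widetilde{L}^p(0,1)$ for $\mu$-a.e. $x$, i.e. $\mu(\widetilde{L}^p(0,1))=1$, and $u(x,\xi)=x(\xi)$ $\mu\times d\xi$-a.e., which is exactly the asserted convergence.

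The main obstacle is precisely this identification step. Unlike in Lemma \ref{l4.1}, one cannot simply write $P_nx\to x$ in $L^2(0,1)$ for a fixed $x$, since $x$ is only a distribution and the slices of the $L^p$-limit are not obviously the ``true'' $x$; the comparison must be routed through the $H$-topology, using both the continuous and injective embedding $\widetilde{L}^p(0,1)\hookrightarrow H$ and the basis convergence $P_nx\to x$ in $H$.
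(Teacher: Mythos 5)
Your proposal is correct, but the identification step is carried out by a genuinely different mechanism than the paper's. The paper first reduces everything to $p=2$ and exploits the Hilbert structure: using Remark \ref{r5.1}, $\int_0^1|P_nx(\xi)|^2\,d\xi=\sum_{k=1}^n\langle x,f_k\rangle_{H}^2\,k^2\pi^2$ is increasing in $n$, converging to $\|x\|_{L^2}^2$ if $x\in\widetilde{L}^2(0,1)$ and to $+\infty$ otherwise; monotone convergence combined with \eqref{e5.2} shows the limit function is $\mu$-integrable, hence finite a.e., which yields $\mu(\widetilde{L}^2(0,1))=1$ \emph{before} any identification, and the equality of the $L^2(H\times(0,1))$-limit with $(x,\xi)\mapsto x(\xi)$ is then obtained by applying Fatou's lemma to $\int_0^1|P_nx(\xi)-P_mx(\xi)|^2\,d\xi$ as $m\to\infty$, using the Cauchy estimate \eqref{e5.3}; for general $p$ the full-measure statement and the convergence then follow as in Lemma \ref{l4.1}. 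You instead treat each $p\geq 1$ directly and route the identification through the topology of $H$: extract an a.e.\ convergent subsequence, note that the zero-mean property passes to the $L^p(0,1)$ limit, and combine the continuous injective embedding $\widetilde{L}^p(0,1)\hookrightarrow H$ with the basis convergence $P_nx\to x$ in $H$. Your embedding estimate is sound: any $y\in V$ has zero mean, hence vanishes at some point, so $\|y\|_\infty\leq\|y'\|_{L^2}=\|y\|_V$ and therefore $\|z\|_H\leq\|z\|_{L^1}\leq\|z\|_{L^p}$ for zero-mean $z$, and injectivity follows since $V$ plus constants is dense in $L^{p'}(0,1)$. What each approach buys: yours delivers $\mu(\widetilde{L}^p(0,1))=1$ and the identification in one stroke for every $p$, avoiding both the monotone dichotomy and the Fatou argument, and it isolates precisely where the dual-space setting matters; the paper's stays entirely inside the $L^2$ orthogonal-projection framework, making no appeal to the one-dimensional Sobolev embedding of $V$, at the cost of the extra reduction from general $p$ to $p=2$.
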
 
\begin{proof}
It is sufficient to prove that  the statement holds for $p=2$. Indeed, estimate \eqref{e5.3} implies that the sequence $(x, \xi)\mapsto P_nx (\xi)$ converges in 
$L^p(H\times (0,1), \mu\times d\xi)$ for every $p$ to a limit function, that we identify with the function $(x,\xi) \mapsto x(\xi)$ taking $p=2$. Once we know that $\int_H\int_0^1 |x(\xi)|^p d\xi \,d\mu <\infty$, then 
$\mu(\widetilde{L}^p(0,1)) $ is obviously $1$. 

So, fix $p=2$. Since 
$$\int_0^1 |P_nx(\xi)|^2 d\xi = 
\int_0^1 \sum_{h, k=1}^n \langle x, f_k\rangle_{H } \langle x, f_h\rangle_{H } f_k(\xi)f_h(\xi)d\xi = \int_0^1 \sum_{k=1}^n \langle x, f_k\rangle^2_{H } f_k(\xi)^2d\xi , $$
then for every $x\in H$ the sequence $\int_0^1 |P_nx(\xi)|^2 d\xi $ is increasing,  it converges to 
$\|x\|^2_{L^2}$ if $x\in \widetilde{L}^2(0,1)$, and to $+\infty$ if $x \notin \widetilde{L}^2(0,1)$ by Remark \ref{r5.1}. 
By monotone convergence and \eqref{e5.2} with $p=2$ the limit function belongs to $L^1(H, \mu)$, 
and this implies $\mu(\widetilde{L}^2(0,1) ) = 1$. Consequently, the  function $(x, \xi)\mapsto x(\xi)$ is defined a.e. in  $H\times (0, 1)$. 
Moreover, 
$$\int_{\widetilde{L}^2(0,1)} \int_0^1|P_nx(\xi) - x(\xi)|^2d\xi\,d\mu = \int_{\widetilde{L}^2(0,1)} \lim_{m\to \infty} \int_0^1|P_nx(\xi) - P_m x(\xi)|^2d\xi\,d\mu $$
$$\leq \liminf_{m\to \infty}\int_{\widetilde{L}^2(0,1)}   \int_0^1|P_nx(\xi) - P_m x(\xi)|^2d\xi\,d\mu $$
For each $\eps>0$ there is $n_{\eps}\in \N$ such that for  $n$, $m\geq n_{\eps} $ we have 
$\int_{\widetilde{L}^2(0,1)}   \int_0^1|P_nx(\xi) - P_m x(\xi)|^2d\xi\,d\mu \leq \eps$. Then for $n\geq n_{\eps}$
we get  $\int_{\widetilde{L}^2(0,1)} \int_0^1|P_nx(\xi) - x(\xi)|^2d\xi\,d\mu \leq \eps$, and the statement follows. 
\end{proof}

\begin{Proposition}
\label{p5.4}
Under Hypothesis \ref{Phi}, $U \in W^{1,p}_{1/2}(H, \mu)$ and $ \lim_{n\to \infty} U_n = U$ in $L^p(H, \mu)$,  for every $p\geq 1$. 
Moreover, $D_kU(x) = \int_0^1\Phi '(x(\xi))f_k(\xi)d\xi$ for a.e. $x\in H$. 
\end{Proposition}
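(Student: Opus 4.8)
The plan is to carry out exactly the approximation announced before the statement: approach $U$ by the cylindrical functions $U_n = U\circ P_n$, establish an explicit formula for their gradients, and pass to the limit. The engine of the whole argument will be an algebraic reduction peculiar to the Cahn--Hilliard scaling, namely that the decay of the eigenvalues $\lambda_k = 1/(2\pi^4 k^4)$ lets one dominate the full $Q^{1/2}$-gradient of $U_n$ by a single $L^2(0,1)$-norm of $\Phi'\circ P_nx$.

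First I would settle integrability and the $L^p$ convergence $U_n\to U$. Since \eqref{e4.4} with $p_1=p_2+1$ gives $|\Phi(t)|\le C(1+|t|^{p_1})$, Corollary \ref{c5.3} yields $U, U_n\in L^p(H,\mu)$ for every $p\ge 1$ with bounds uniform in $n$; moreover $P_nx(\xi)\to x(\xi)$ in $L^r(H\times(0,1),\mu\times d\xi)$ for all $r$ by the same corollary, so by the dominated convergence theorem, using the uniform moment bounds of Proposition \ref{p5.2} for uniform integrability, $U_n\to U$ in $L^p(H,\mu)$ for every $p\ge 1$. Next comes the gradient computation. Each $U_n$ is a $C^1$ function of $(x_1,\dots,x_n)$, $x_k=\langle x,f_k\rangle_H$, with polynomial growth of $U_n$ and $DU_n$; hence $U_n\in W^{1,p}_{1/2}(H,\mu)$ for every $p$ (it is approximated in the $W^{1,p}_{1/2}$-norm by ${\mathcal F}{\mathcal C}^1_b(H)$ functions obtained by truncating $\Phi$), and $D_kU_n(x)=\int_0^1\Phi'(P_nx(\xi))f_k(\xi)\,d\xi$ for $k\le n$, while $D_kU_n=0$ for $k>n$. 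The crucial observation is that $\sqrt{\lambda_k}\,f_k = e_k/(\sqrt{2}\,\pi k)$, since $\lambda_k=1/(2\pi^4k^4)$ and $f_k=k\pi e_k$; therefore $\sqrt{\lambda_k}\,D_kU_n = \frac{1}{\sqrt{2}\,\pi k}\langle \Phi'\circ P_nx,\,e_k\rangle_{L^2(0,1)}$, and by Bessel's inequality for the orthonormal system $\{e_k\}$ of $\widetilde{L}^2(0,1)$,
$$\|Q^{1/2}DU_n\|_H^2 = \frac{1}{2\pi^2}\sum_{k\le n}\frac{1}{k^2}\langle \Phi'\circ P_nx,\,e_k\rangle^2 \le \frac{1}{2\pi^2}\int_0^1 |\Phi'(P_nx(\xi))|^2\,d\xi.$$

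Then I would prove convergence of the gradients. Define $G(x):=\sum_k \sqrt{\lambda_k}\big(\int_0^1\Phi'(x(\xi))f_k(\xi)\,d\xi\big)f_k$; the same computation gives $\|G(x)\|_H^2\le \frac{1}{2\pi^2}\|\Phi'\circ x\|_{L^2(0,1)}^2$, which is finite $\mu$-a.e. Subtracting and using $1/k^2\le 1$ on the head and $1/k^2\le 1/(n+1)^2$ on the tail,
$$\|Q^{1/2}DU_n - G\|_H^2 \le \frac{1}{2\pi^2}\|\Phi'\circ P_nx - \Phi'\circ x\|_{L^2(0,1)}^2 + \frac{1}{2\pi^2(n+1)^2}\|\Phi'\circ x\|_{L^2(0,1)}^2.$$
Raising to the power $p/2$ and integrating over $H$, the second term vanishes as $n\to\infty$ because $x\mapsto\|\Phi'\circ x\|_{L^2(0,1)}\in L^p(H,\mu)$ (polynomial growth and Corollary \ref{c5.3}), while the first term vanishes by continuity of the superposition operator: since $P_nx\to x$ in $L^{2p_2}(H\times(0,1))$ and $\Phi'$ has growth of order $p_2$, the uniform moment bounds of Proposition \ref{p5.2} together with dominated convergence (argued along subsequences) give $\int_H\|\Phi'\circ P_nx - \Phi'\circ x\|_{L^2(0,1)}^p\,d\mu\to 0$. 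Hence $Q^{1/2}DU_n\to G$ in $L^p(H,\mu;H)$. Finally, the operator $Q^{1/2}D$ is closable in $L^p(H,\mu)$ and $W^{1,p}_{1/2}(H,\mu)$ is the domain of its closure (Section 2); since $U_n\to U$ in $L^p(H,\mu)$ and $Q^{1/2}DU_n\to G$ in $L^p(H,\mu;H)$, closedness yields $U\in W^{1,p}_{1/2}(H,\mu)$ with $Q^{1/2}DU=G$, that is $D_kU(x)=\int_0^1\Phi'(x(\xi))f_k(\xi)\,d\xi$, the asserted formula.

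The step I expect to require the most care is the $L^p(H,\mu)$-convergence of the superposition $\Phi'\circ P_nx$ to $\Phi'\circ x$ — i.e. the continuity of the Nemytskii operator combined with the Gaussian moment control of Proposition \ref{p5.2} — together with the (routine but non-void) verification that each unbounded $U_n$ genuinely lies in $W^{1,p}_{1/2}$. Both become straightforward once the eigenvalue reduction $\|Q^{1/2}DU_n\|_H^2\le \frac{1}{2\pi^2}\|\Phi'\circ P_nx\|_{L^2(0,1)}^2$ is available, which is the true mechanism making $U\in W^{1,p}_{1/2}$ (and, in contrast, excluding $U\in W^{1,2}_0$).
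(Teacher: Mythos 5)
Your argument is correct, and it shares the paper's skeleton --- approximate $U$ by $U_n=U\circ P_n$, compute $D_kU_n$ as in \eqref{e5.4}, exploit the eigenvalue decay $\lambda_k\sim k^{-4}$ --- but it departs from the paper's proof at both decisive steps. For the key estimate, the paper bounds each component crudely, $|D_kU_n(x)|\le C\lambda_k^{-1/4}\,\|1+|P_nx|\,\|_{L^{2p_2}(0,1)}^{p_2}$, using only $\|f_k\|_\infty\le\sqrt{2}\,\pi k\sim\lambda_k^{-1/4}$, and then sums via $\sum_k\lambda_k^{1/2}<\infty$; your identity $\sqrt{\lambda_k}\,f_k=e_k/(\sqrt{2}\,\pi k)$ combined with Bessel's inequality gives the sharper and more transparent bound $\|Q^{1/2}DU_n\|_H^2\le\frac{1}{2\pi^2}\|\Phi'\circ P_nx\|_{L^2(0,1)}^2$. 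More substantially, the paper proves only \emph{boundedness} of $(U_n)$ in $W^{1,p}_{1/2}(H,\mu)$ and concludes $U\in W^{1,p}_{1/2}(H,\mu)$ via the weak-compactness lemma \cite[Lemma 5.4.4]{Boga}; the formula for $D_kU$ then needs a separate step (for each fixed $k$, a subsequence of $D_kU_n$ converging in $L^2(H,\mu)$, identified through the integration by parts formula \eqref{e1.7}). You instead prove \emph{strong} convergence $Q^{1/2}DU_n\to G$ in $L^p(H,\mu;H)$, with an explicit $(n+1)^{-2}$ tail rate, and close the argument by the closedness of $Q^{1/2}D$; this yields the membership and the derivative formula in one stroke and avoids the external compactness lemma, at the price of the Nemytskii continuity $\Phi'\circ P_nx\to\Phi'\circ x$ in $L^p(H,\mu;L^2(0,1))$ --- which your a.e.-subsequence plus uniform-integrability (Vitali) argument handles correctly, and which is in substance the same ingredient the paper itself uses in its identification step, where its nominal ``dominated convergence'' also involves $n$-dependent dominating functions. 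Two minor points: your route does require, as you note, that each $U_n$ genuinely lies in $W^{1,p}_{1/2}(H,\mu)$ with its classical gradient, so keep the truncation approximation by ${\mathcal F}{\mathcal C}^1_b(H)$ functions explicit; and since Section 2 sets up the closure of $Q^{1/2}D$ only for $p>1$, for $p=1$ you should simply deduce the statement from the case $p=2$, using that $\mu$ is a probability measure.
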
 
\begin{proof} 
As a first step, we remark that the sequence of functions $x\mapsto  \|P_nx\|_{L^p(0,1)}^p$ is bounded in $L^s(H, \mu)$ for every $s\geq 1$. Indeed, using  the H\"older inequality we get 
$$ \int_0^1 |P_nx(\xi)|^pd\xi   \leq \bigg(\int_0^1 |P_nx(\xi)|^{ps}d\xi \bigg)^{1/s}, \quad s\geq 1,  $$
and the right-hand  side belongs to $L^{s}(H, \mu)$ with norm independent of $n$, by estimate \eqref{e5.2}. 

We already remarked that  $|U_n(x)|\leq \int_0^1 C(1+|P_nx(\xi)|)^{p_1}d\xi$ with $p_1=p_2+1$, so that 
 $U_n$ is bounded in $L^p(H, \mu)$ by a constant independent of $n$, for every $p\geq 1$. 
Let us prove that $U_n\to U$ in $L^p(H, \mu)$. Using \eqref{e4.4} and the H\"older inequality we get
$$\begin{array}{l} 
\ds |U_n(x) -U(x)|^p \leq \bigg(\int_0^1 |\Phi(P_nx(\xi)) - \Phi (x(\xi))|d\xi\bigg)^p
\\
\\
\ds \leq C^p \bigg(\int_0^1(1+ |x(\xi)| + |P_nx(\xi)|)^{p_2}|P_nx(\xi) - x(\xi)| d\xi \bigg)^p
\\
\\
\leq \ds C^p \bigg(\int_0^1(1+ |x(\xi)| + |P_nx(\xi)|)^{2p_2p}d\xi\bigg)^{1/2} 
\bigg(\int_0^1|P_nx(\xi) - x(\xi)|^{2p} d\xi \bigg)^{1/2} .
\end{array}$$
Since $x\mapsto \|1 +|x| + |P_nx|\,\|_{L^{2p_2p}(0,1)}$ is bounded in $L^{2p_2p}(H, \mu)$ by a constant independent of $n$, and $\|P_nx -x\|_{L^{2p}(0,1)}$ vanishes in $L^{2p }(H, \mu)$ as $n\to \infty$, 
by the H\"older inequality the right-hand  side vanishes in $L^{1}(H, \mu)$ as $n\to \infty$.
Hence, $U$ in $L^{p }(H, \mu)$ and 
$U_n\to U$ in $L^{p }(H, \mu)$ as $n\to \infty$. 

To prove that $U\in W^{1,p}_{1/2}(H, \mu)$ it is enough to show that the sequence $U_n$ is bounded in 
$W^{1,p}_{1/2}(H, \mu)$ (e.g., \cite[Lemma 5.4.4]{Boga}). We already know that it is bounded in $L^{p}(H, \mu)$. Moreover each  $U_n$ is continuously differentiable, since it is the composition of $x\mapsto P_nx$ which is smooth from $H$ to $C([0,1])$, and $y\mapsto \int_0^1\Phi(y(\xi))d\xi$ which is continuously differentiable  from $C([0,1])$ to $\R$,  and 
\begin{equation}
\label{e5.4}
D_k U_n(x ) =  \int_0^1 \Phi'(P_nx(\xi)) f_k(\xi)d\xi , \quad k\leq n, 
\end{equation}
while $D_k U_n(x )=0$ for $k>n$. 
Using again assumption \eqref{e4.4} and the H\"older inequality we get
$$|D_k U_n(x)| =  \bigg|\int_0^1 \Phi'(P_nx(\xi)) f_k(\xi)d\xi \bigg|
\leq C \int_0^1 (1+ |P_nx(\xi)|)^{p_2}|f_k(\xi)|d\xi 
\leq \frac{C}{\lambda_k^{1/4}} \|1+ |P_nx|\,\|_{L^{2p_2}(0,1)}^{p_2},  $$
for $k\leq n$. Then 
$$\|Q^{1/2} DU_n(x)\|^2 = \sum_{k=1}^{n}  \lambda_k |D_kU_n(x)|^2\leq 
 C^2   \sum_{k=1}^{\infty}  \lambda_k^{1/2}  \|1+|P_nx|\,\|_{L^{2p_2}(0,1)}^{2p_2}. $$
By the first part of the proof we know that $x\mapsto \|P_nx\|_{L^{2p_2}(0,1)}^{2p_2}$
belongs to   $L^1(H, \mu)$ with norm bounded by a constant independent of $n$. Since  $\sum_{k=1}^{\infty}  \lambda_k^{1/2}<\infty$,  then $U_n$ is bounded in 
$W^{1,p}(H, \mu)$ so that $U\in W^{1,p}(H, \mu)$. 

Now we show that for every  $k\in \N$,  a subsequence of $D_kU_n$ converges to $\int_0^1\Phi ' (x(\xi))f_k(\xi)d\xi$ in $L^2(H, \mu)$. Then the equality  $D_kU(x) = \int_0^1\Phi ' (x(\xi))f_k(\xi)d\xi$
$\mu$-a.e.  follows using the  integration by parts formula \eqref{e1.7}. 

We have
$$\int_H \bigg| D_kU_n(x) - \int_0^1\Phi ' (x(\xi))f_k(\xi)d\xi\bigg|^2 d\mu 
\leq \int_H \int_0^1 |\Phi ' (P_nx(\xi)) - \Phi ' (x(\xi))|^2f_k(\xi)^2d\xi d\mu .$$
By Corollary \ref{c5.3}, the sequence of functions $(x, \xi)\mapsto P_nx(\xi)$ converges to $x(\xi)$ in $L^2(H, \mu)$. Consequently, a subsequence converges $\mu$-almost everywhere, and since $\Phi '$ is continuous, along such subsequence $(x, \xi)\mapsto (\Phi '(P_nx(\xi)) - \Phi '(x(\xi)))f_k(\xi)$ vanishes.
Moreover, by assumption \eqref{e4.4}, 
$$|\Phi ' (P_nx(\xi)) - \Phi ' (x(\xi))|^2f_k(\xi)^2 \leq 
C^2(2+ |P_nx(\xi)|^{p_2} + |x(\xi)|^{p_2})\|f_k\|_{\infty}^2$$
which belongs to $L^1(H\times (0,1), \mu \times d\xi )$ with norm bounded by a constant independent of $n$. The statement follows by the dominated convergence theorem. 
\end{proof}

Then, $U$ satisfies Hypothesis \ref{Hyp}. So, the results of Theorem \ref{t2.7} and of Proposition \ref{p3.2}, \ref{p4.8} hold.

\vspace{2mm}
 We recall that the operator $Q^{1/2}D$ in the space $L^2(H, \nu;H)$
is the closure of the operator $\varphi \mapsto Q^{1/2}D\varphi$ defined in a set of smooth functions, see Definition \ref{d1.4}. However, we can identify $Q^{1/2}DU(x)$: indeed, recalling that $B= Q^{-1/2}/\sqrt{2}$, we obtain 
$$D_kU(x) = \langle \Phi '\circ x, f_k\rangle_{L^2(0,1)} = \langle \Phi '\circ x -\int_0^1\Phi'(x(\xi))d\xi, Bf_k\rangle_{H } =  \frac{\lambda_k^{-1/2}}{\sqrt{2}}\langle \Phi '\circ x  -\int_0^1\Phi'(x(\xi))d\xi,  f_k\rangle_{H } $$
for every $x\in \widetilde{L}^{2p_2}(0,1)$, so that 
$$Q^{1/2}DU(x) = \frac{1}{\sqrt{2}}\sum_{k=1}^{\infty}   \langle  \Phi '\circ x  -\int_0^1\Phi'(x(\xi))d\xi,  f_k\rangle_{H }f_k =  \frac{\Phi '\circ x  -\int_0^1\Phi'(x(\xi))d\xi}{\sqrt{2}}.$$
On the other hand, we
already mentioned that if  $\Phi '\circ x\in V$ [i.e., $\Phi '\circ x\in D(B)$], then $D_0U(x) = B( \Phi '\circ x)$, so that, since $Q^{1/2}= B^{-1}/\sqrt{2}$,  $Q^{1/2}D_0U(x) = Q^{1/2}DU(x)$. 
For such $x$ we have
$$ \langle B(\Phi '\circ x) ,  Du(x)\rangle  = \langle {\Phi '\circ x} , BDu(x)\rangle =
\langle Q^{1/2}DU(x), Q^{-1/2}Du(x)\rangle   
=\langle  DU(x), Du(x)\rangle .$$
Then  the stochastic differential equation \eqref{e1.3}
in $H$  reads as 
\begin{equation}
\label{CHstoch}
dX(t)= (- \tfrac{\partial ^4}{\partial \xi^4} X -  \tfrac{\partial ^2}{\partial \xi^2}\Phi'(X))dt+dW(t),\quad X(0)=x,  
\end{equation}
and  it is a stochastic Cahn--Hilliard equation, whose Kolmogorov operator is   $\mathcal K$. It was studied in 
\cite{EM} and in several following papers, in particular in \cite{DPD} where existence and uniqueness of weak solutions were proved for polynomial nonlinearities $\Phi$. Here $W(t)$ is, as usual, any $H$-valued cylindrical Wiener process defined in a probability space $(\Omega, \mathcal F, \P)$. 

We think that  it is possible to relate the weak solution to \eqref{CHstoch} constructed in  \cite{DPD} to the solution of the Kolmogorov equation by formula \eqref{finalmente}, at least in the model case $\Phi(\xi) = \xi^{2m}$ with $m\in \N$. 
Indeed, for every $x\in H$ the weak solution given by  \cite[Theorem 2.1]{DPD} is obtained through cylindrical approximations $X_n(t)$, solutions to
\begin{equation}
\label{Kolmuzn}
dX_{n}=(A_nX_{n} + P_n B\Phi'(P_nX))dt+P_ndW(t),\quad X_n(0)=P_nx ,  
\end{equation}
with $A_n = A_{|P_n(H)} \in {\mathcal L}(P_n(H))$; identifying $P_n(H)$ with $\R^n$ 
the  Kolmogorov operator ${\mathcal K}_n$ associated to \eqref{Kolmuzn} is 
$${\mathcal K}_n\varphi = \frac{1}{2} \Delta \varphi - \sum_{k=1}^n \bigg(\frac{x_k}{2\lambda_k} + \int_0^1 \Phi'\bigg( \sum_{h=1}^n x_h f_h(\xi)\bigg) f_k(\xi)d\xi\bigg)D_k\varphi  .$$
Taking into account such explicit expressions, one should be able to follow the procedure of Proposition \ref{KolmLip} (that deals with the case of Lipschitz continuous $DU$). However, many details should be fixed, and giving a complete proof goes beyond the aims of this paper.

\section{Acknowledgements}
We thank one of the Referees for careful reading of the manuscript, and for several remarks that helped to improve the paper.

\end{document}